\sloppy \pagestyle{plain} \textwidth=13.5cm \textheight=21cm
\makeatletter \@addtoreset{equation}{section} \makeatother
\newcommand{\A}{\mathbb{A}}
\newcommand{\FF}{\mathbb{F}}
\newcommand{\CC}{\mathbb{C}}
\newcommand{\RR}{\mathbb{R}}
\newcommand{\QQ}{\mathbb{Q}}
\newcommand{\ZZ}{\mathbb{Z}}
\newcommand{\GG}{\mathbb{G}}
\newcommand{\PP}{\mathbb{P}}
\newcommand{\OOO}{{\mathscr{O}}} 
\newcommand{\NNN}{{\mathscr{N}}} 
\newcommand{\EEE}{{\mathscr{E}}}
\newcommand{\TTT}{{\mathscr{T}}}
\newcommand{\Pic}{\operatorname{Pic}}
\newcommand{\rk}{\operatorname{rk}}
\newcommand{\pr}{\operatorname{pr}}
\newcommand{\Sing}{\operatorname{Sing}}
\newcommand{\Gr}{\operatorname{Gr}}
\newcommand{\Eu}{\operatorname{Eu}}
\newcommand{\pt}{\operatorname{pt}}
\newcommand{\Aut}{\operatorname{Aut}}
\renewcommand{\emptyset}{\varnothing}
\newtheorem{theorem}[equation]{Theorem}
\newtheorem{proposition}[equation]{Proposition}
\newtheorem{corollary}[equation]{Corollary}
\newtheorem{lemma}[equation]{Lemma}
\theoremstyle{definition}
\newtheorem{notice}[equation]{}
\theoremstyle{remark}
\newtheorem{remark}[equation]{Remark}
\theoremstyle{remark}
\newtheorem{remarks}[equation]{Remarks}
\theoremstyle{definition}
\newtheorem{sit}[equation]{}
\title{Examples of cylindrical Fano fourfolds}
\author{Yuri Prokhorov}\thanks{The first author was
partially supported by the grant of
the Leading Scientific Schools No. 5139.2012.1, the Simons-IUM fellowship,
and the AG Laboratory SU-HSE, RF governmental
grant ag. 11.G34.31.0023.
}
\address{Yuri Prokhorov:
Steklov Mathematical Institute,
8 Gubkina street, Moscow 119991, Russia
\newline\indent
Department 
of Algebra, Faculty of Mathematics, Moscow State
University, Moscow 117234, Russia
\newline\indent
Laboratory of Algebraic Geometry, SU-HSE, 
7 Vavilova Str., Moscow 117312, Russia
}
\email{prokhoro@gmail.com}
\author{Mikhail Zaidenberg}
\address{Mikhail Zaidenberg: Universit\'e
Grenoble I, Institute Fourier, UMR 5582 CNRS-UJF, BP~74, 38402 Saint
Martin d'H\`eres cedex, France} \email{zaidenbe@ujf-grenoble.fr }
\begin{document}
\begin{abstract}
We construct 4 different families of smooth Fano fourfolds with Picard rank 1, 
which contain cylinders, i.e., Zariski open subsets of the form $Z\times\A^1$, 
where $Z$ is a quasiprojective variety. 
The affine cones over such a fourfold admit effective $\GG_{\operatorname{a}}$-actions.
Similar constructions of cylindrical Fano threefolds were done previously in 
\cite{Kishimoto-Prokhorov-Zaidenberg, Kishimoto-Prokhorov-Zaidenberg-2011}.
\end{abstract}
\subjclass[2010]{Primary 14R20, 14J45; \ Secondary 14J50, 14R05}
\keywords{Affine cone, Fano variety, automorphism, group
action of the additive group}
\maketitle

\section*{Introduction}
A smooth projective variety $V$ over $\CC$ 
is called {\em cylindrical} if it contains a cylinder, i.e., 
a principal Zariski open subset $U$ isomorphic to a product $Z\times\A^1$, 
where $Z$ is a variety and $\A^1$ stands for the affine line over $\CC$ 
(\cite{Kishimoto-Prokhorov-Zaidenberg, 
Kishimoto-Prokhorov-Zaidenberg-criterion}). 

Assuming that $\Pic (V)\simeq\ZZ$, 
we let $X$ be the affine cone over $V$.
Due to the criterion of 
\cite[Corollary 3.2]{Kishimoto-Prokhorov-Zaidenberg-criterion}, 
$X$ admits an effective action of the additive group 
$\mathbb{G}_{\operatorname{a}}$ 
if and only if $V$ is cylindrical. 
This explains our interest in cylindrical projective varieties.

 On the other hand, 
there is a well known Hirzebruch Problem of classifying all possible smooth 
completions of the affine space $\A^n$; see \cite{Furushima1990} 
and references therein 
for studies
on this problem. Similarly, it would be interesting 
to classify all cylindrical Fano varieties, or at least those 
with Picard number 1. 

The answer to the latter question is known in dimension 2, 
even without the restriction on the Picard number. 
Namely, a smooth del Pezzo surface of degree $d$ is cylindrical\footnote{If 
$\Pic(V)\not\simeq\ZZ$, by cylindricity we mean here the polar cylindricity, 
see \cite{Kishimoto-Prokhorov-Zaidenberg-del-pezzo}.} 
if and only if 
$d\ge 4$ \cite{Kishimoto-Prokhorov-Zaidenberg, 
Kishimoto-Prokhorov-Zaidenberg-del-pezzo, Cheltsov-Park-Won-2013, 
Cheltsov-2014}.

In dimension 3, a cylindrical Fano threefold 
must be rational, see Remark \ref{rem:rationality}(a) below. However, 
even for cylindrical 
Fano threefolds with Picard 
number 1, the 
complete classification is lacking. Certain classes of such 
threefolds were described in \cite{Kishimoto-Prokhorov-Zaidenberg} and 
\cite{Kishimoto-Prokhorov-Zaidenberg-2011}; we provide below
 the exhaustive list of examples known to us. 

The projective space $\PP^3$ and the smooth quadric in $\PP^4$ are 
cylindrical, since they contain a Zariski open subset isomorphic to the affine 
space $\A^3$. By the same reason, the Fano threefold of index $2$ and degree 
$5$ is also 
cylindrical. A smooth intersections of two quadrics in $\PP^5$ is always 
cylindrical (see \cite[Propositions 5.0.1]{Kishimoto-Prokhorov-Zaidenberg}). 
The same is true for the Fano threefolds of index 1 and genus $12$ 
(\cite[Propositions 5.0.2]{Kishimoto-Prokhorov-Zaidenberg}). 
The moduli space of the latter family has
dimension $6$, while the subfamily of completions of $\A^3$ 
is four-dimensional.
There are two more families of cylindrical Fano threefolds with Picard 
number 1, index 1 and genera $g = 9$ and $10$. 
These fill in at least hypersurfaces in the corresponding moduli spaces 
(\cite{Kishimoto-Prokhorov-Zaidenberg-2011}). 
We do not dispose a single example of a smooth 
Fano threefold of genus $7$ carrying a cylinder.

In this paper we construct examples of smooth, cylindrical 
Fano fourfolds with Picard rank 1. 
Let us recall first the standard terminology and notation.
Given a smooth Fano fourfolds $V$ with Picard rank 1, the \textit{index} $r$ of $V$ 
is the integer $r$ such that $-K_V\in r[H]$, where $[H]$ is the ample divisor class generating 
the Picard group: $\Pic (V)=\ZZ\cdot [H]$.
The \textit{degree} $d=\deg V$ of $X$ is the degree with respect to $H$. 
It is known that $1\le r\le 5$. Moreover,
if $r=5$ then
$V\simeq \PP^4$ , and if $r=4$ then $V$ is a quadric in 
$\PP^5$. Smooth Fano fourfolds of index $r=3$ are called {\em del 
Pezzo fourfolds}; 
their degrees vary in the range $1\le d\le 5$ (\cite{Fujita1980-84}). 
Smooth Fano fourfolds of index $r=2$ are called {\em Mukai fourfolds}; 
their degrees are even and can be written as $d=2g-2$, where $g$ is called the \textit{genus} of $V$.
The genera of Mukai fourfolds satisfy 
$2\le g\le 10$ (\cite{Mukai-1989}). The classification of Fano fourfolds of
index $r=1$ is not known. 

We let below $V_d$ 
be a Mukai fourfold of degree $d$, and $W_d$ be a del 
Pezzo fourfold of degree $d$. 

Our main results are summarized in the following theorem.

\begin{theorem}\label{thm:main thm} 
There are the following families of cylindrical, smooth, 
rational Fano fourfolds:

\begin{enumerate} 

\item the smooth intersections $V_{2\cdot 2}$ of 
two quadrics in $\PP^6$;

\item the del Pezzo fourfolds $W_5$ of degree 
$5$;

\item certain Mukai fourfolds $V_{14}$ of genus $8$ 
varying over a a family of codimension $1$ in the moduli space; 

\item certain Mukai fourfolds $V_{12}$ of genus $7$ 
varying over a family of codimension $2$ \textup(of dimension $13$\textup) in the moduli space. 
\end{enumerate} 
\end{theorem}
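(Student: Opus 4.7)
The plan is to treat each of the four families separately by constructing an explicit cylinder $U\simeq Z\times\A^1\subset V$. Following the strategy of \cite{Kishimoto-Prokhorov-Zaidenberg, Kishimoto-Prokhorov-Zaidenberg-2011}, in every case I choose a distinguished subvariety $S\subset V$, consider the linear subsystem $\LLL\subset |H|$ of hyperplane sections vanishing along $S$ to a prescribed order, and resolve the associated rational map $\varphi_{\LLL}\colon V\dashrightarrow V'$ by a blow-up $\sigma\colon \tilde V\to V$ with center supported on $S$. If the resolved morphism $\tilde\varphi\colon \tilde V\to V'$ is birational onto a simpler model (typically a $\PP^n$, a scroll, or a quadric fibration) whose complement of a specific divisor is itself cylindrical, then tracing the construction back through $\sigma$ and discarding the exceptional locus together with the preimage of that divisor produces a cylinder in $V$.

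For (i), with $V=V_{2\cdot 2}\subset\PP^6$, the distinguished subvariety is a line $L\subset V$; the projection from $L$, after blowing $L$ up, becomes a birational morphism onto $\PP^4$, and the cylinder should arise from the family of $2$-planes of $\PP^6$ containing $L$ and meeting $V$ along a residual conic. For (ii), on $W_5\subset\PP^7$ one takes $S$ to be a $2$-plane contained in $W_5$ (of which there are many, $W_5$ being a linear section of $\Gr(2,5)$) and applies an analogous projection. For (iii) and (iv), the subvariety $S$ must be a surface in $V$ of larger degree; the codimensions $1$ and $2$ in the moduli reflect exactly the parameter count for the sublocus of Mukai fourfolds actually containing such a surface.

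Rationality of $V$ is classical for (i) and (ii), and is a by-product of the birational model $\tilde V\to V'$ in (iii) and (iv). Smoothness of the generic member of each subfamily will follow from a Bertini-type argument applied to the linear system cutting out $V$ inside the ambient projective space or Grassmannian.

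The main obstacle lies in parts (iii) and (iv), where one has to: (a) exhibit a non-empty family of smooth Mukai fourfolds of the asserted codimension that contain the required surface $S$; (b) check that a sufficiently general choice makes the linear system $\LLL$ free outside $S$ and keeps $\tilde V$ smooth; and (c) pinpoint explicitly the cylindrical open subset of $V'$, which amounts to producing a rational section of the induced fibration. Step (a), in particular, requires an explicit parameter count and the construction of at least one concrete $V$ of each type, and this is where I expect the technical heart of the argument to sit.
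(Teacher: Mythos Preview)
Your broad framework---project from a subvariety $S$, resolve to a Sarkisov link, and read off a cylinder in the target---matches the paper for (i) and (ii). The line in (i) and the plane in (ii) are exactly the right centers. What you leave vague in (i) is the specific mechanism: the image $\varphi(E)\subset\PP^4$ of the exceptional divisor is a \emph{singular} quadric, and projection from a point of $\Sing(\varphi(E))$ furnishes the cylinder in $\PP^4\setminus\varphi(E)\simeq V_{2\cdot2}\setminus\rho(D)$. For (ii) the paper proves more than bare cylindricity: it shows $W_5\setminus A$ contains a cylinder for \emph{every} singular hyperplane section $A$, a sharpening that is indispensable later.

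For (iii) and (iv) there is a genuine misconception. The center $S\subset V$ is \emph{not} a surface of larger degree: in both cases it is a plane. The point you are missing is that the link does not land in $\PP^n$ or a scroll but in one of the \emph{earlier} Fano fourfolds, so the argument is iterative. In (iii) one projects $V_{14}$ from a $\sigma_{4,2}$-plane $\Pi$ onto $W_5$; the image $\varphi(E)$ is a singular hyperplane section of $W_5$, and the cylinder is inherited from case (ii) via the strengthened statement just mentioned. In (iv) the paper runs the link in the opposite direction: it starts from $W=W_{2\cdot2}$ containing a quintic del Pezzo surface $F=F_5$ (\emph{this} is where the higher-degree surface lives, in $W$, not in $V$), constructs $V_{12}$ via the linear system $|2H-F|$, and transports a cylinder from case (i) across, using a refinement that keeps track of an extra hyperplane section containing a plane of $W$. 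The codimension-$1$ condition in (iii) is the parameter count for $V_{14}$ to contain a $\sigma_{4,2}$-plane; the codimension-$2$ count in (iv) is carried out on the $(W_{2\cdot2},F_5)$ side.

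Consequently your step (c), ``pinpoint explicitly the cylindrical open subset of $V'$'', is not a direct construction but a reduction to a previous case, and your step (a) will not yield the stated codimensions if you search for higher-degree surfaces inside $V$ rather than for planes.
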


The proof exploits explicit constructions of the Fano fourfolds as in (i)--(iv)
via Sarkisov links. These constructions are borrowed in \cite{Prokhorov-1993c}. 
However, we recover some important details that are just sketched in 
\cite{Prokhorov-1993c}. The proof proceeds as follows. Starting with 
a (simpler) pair $(V,D)$, where $V$ is a smooth Fano fourfold and $D$ 
an effective divisor 
on $V$ such that $V\setminus D$ 
contains a cylinder, we reconstruct it into a new (more complex) pair $(V',D')$ via 
a Sarkisov link that does not destroy the cylinder. 

\begin{remark}\label{rem:rationality} (a) It is known that the del Pezzo 
fourfold 
$W_5$ in (ii) is unique up to isomorphism. In \cite{Prokhorov1994} 
all smooth del Pezzo fourfold completions $(W_5, A)$ of $\A^4$ 
by an irreducible 
divisor $A$ were described. 
Up to isomorphism of pairs, there are exactly 4 such completions. 
In contrast, we show that for an ample divisor $H$
generating 
the Picard group $\Pic (W_5)$, the complement 
$W_5\setminus H$ contains a cylinder, provided 
the hyperplane section $H$ is singular.

(b) For some families of smooth, rational Fano fourfolds constructed in 
\cite{Prokhorov-1993c},
the existence of cylindrical members remains unknown. 
For instance, this concerns the smooth, rational cubic fourfolds in $\PP^5$. 

(c) 
We do not know whether all the cylindrical Fano 
fourfolds are rational, while the varieties in Theorem \ref{thm:main thm} 
are. 
However, any cylindrical Fano threefold is rational.
Indeed, 
for any smooth Fano variety $V$ the plurigenera and irregularity
vanish. If $V$ contains a cylinder $Z\times\A^1$, 
then, for $Z$, the plurigenera and irregularity vanish too. 
If $Z$ is a surface, then it must be rational due to the Castelnuovo rationality 
criterion.
Of course, this argument fails in higher dimensions.
 \end{remark} 

The content of the paper is as follows. In section \ref{sec:prelim} we give some technical preliminaries. 
 We prove items (i), (ii), (iii), and (iv) of 
Theorem \ref{thm:main thm} in subsequent sections \ref{section-2.2}, \ref{sec:W5}, \ref{sec:g=8}, 
and \ref{sec:g=7}, 
more precisely, 
in Theorems \ref{cor:cylinder in W22}, \ref{theorem-w5}, \ref{thm:cylinder-in-V14}, 
and \ref{thm:cylinders-in-V12}, respectively.

\medskip

{\em Acknowledgments}.
This work was done during a stay of the first author at the Institute Fourier,
Grenoble. The authors thank this institution for its hospitality and a generous support.

\section{Preliminaries}\label{sec:prelim}
\begin{notice}{\bf Notation.} Let $X$ be a smooth projective variety 
and $C\subset X$ be a smooth subvariety. We denote by
\begin{enumerate}
\item[] 
 $K_X$ the canonical divisor of $X$;
 \item[] 
 $\TTT_X$ the tangent bundle of $X$;
 \item[] 
 $\NNN_{C/X}$ the normal bundle of $C$ in $X$;
 \item[] 
 $c_i(\EEE)$ the $i$th Chern class of a vector bundle $\EEE$ on $X$, 
 and \item[] $c_i(X)=c_i(\TTT_X)$;
\item[] $\Eu(X)$ the Euler number of $X$.
 \end{enumerate}
\end{notice}

As for the Schubert calculus, we follow the notation in \cite{Griffiths-Harris-1978}.

In this section we gather some auxiliary facts that we need in the subsequent sections. 
The following lemma is a special case of the Riemann-Roch Theorem. 

\begin{lemma}
For a smooth projective fourfold $X$
and a divisor $D$ on $X$ we have
\begin{multline*}
\chi(\OOO_X(D)) = \frac1{24}\left[D^4 + 2D^3\cdot c_1(X) +
D^2\cdot (c_1 (X)^2 + c_2 (X))+\right.\\\left. + D\cdot c_1
(X)\cdot c_2 (X)\right]+ \chi(\OOO_X).
\end{multline*}
\end{lemma}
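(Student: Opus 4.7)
The plan is to apply the Hirzebruch--Riemann--Roch theorem to the line bundle $\OOO_X(D)$, which states that
$$\chi(\OOO_X(D)) = \int_X \operatorname{ch}(\OOO_X(D)) \cdot \operatorname{td}(X).$$
First I would expand both factors as polynomials in the Chern classes up to degree $4$. The Chern character of a line bundle is $\operatorname{ch}(\OOO_X(D)) = e^D = 1 + D + \tfrac{1}{2}D^2 + \tfrac{1}{6}D^3 + \tfrac{1}{24}D^4$, since on a fourfold all terms of degree $\ge 5$ vanish. The Todd class of $X$ has the standard expansion
$$\operatorname{td}(X) = 1 + \tfrac{1}{2}c_1(X) + \tfrac{1}{12}\bigl(c_1(X)^2 + c_2(X)\bigr) + \tfrac{1}{24}c_1(X)\cdot c_2(X) + \operatorname{td}_4(X),$$
which can be either recalled from standard references or derived from the defining power series $\prod_i x_i/(1-e^{-x_i})$ applied to the formal Chern roots of $\TTT_X$.

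Next I would extract the degree-four component of the product. Exactly five pairings contribute a top-dimensional class: $\tfrac{1}{24}D^4 \cdot 1$, $\tfrac{1}{6}D^3 \cdot \tfrac{1}{2}c_1(X)$, $\tfrac{1}{2}D^2 \cdot \tfrac{1}{12}(c_1(X)^2+c_2(X))$, $D \cdot \tfrac{1}{24}c_1(X)\cdot c_2(X)$, and $1 \cdot \operatorname{td}_4(X)$. Collecting the first four terms and factoring out $\tfrac{1}{24}$ reproduces exactly the bracketed expression in the statement of the lemma.

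Finally, to identify the remaining term $\operatorname{td}_4(X)$ with $\chi(\OOO_X)$, I would apply Hirzebruch--Riemann--Roch once more, this time to the trivial line bundle (equivalently, set $D = 0$ in the formula just derived): this gives $\chi(\OOO_X) = \int_X \operatorname{td}(X) = \operatorname{td}_4(X)$, so the constant term matches. The argument is essentially routine algebraic bookkeeping in the Chow (or cohomology) ring of $X$; no genuine obstacle arises, the only potential pitfall being a careful accounting of the numerical coefficients in the Todd class expansion.
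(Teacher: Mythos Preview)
Your proposal is correct and follows exactly the approach the paper indicates: the paper gives no detailed proof but simply states that the lemma is a special case of the Riemann--Roch Theorem, and your derivation via Hirzebruch--Riemann--Roch with the explicit expansion of $\operatorname{ch}(\OOO_X(D))\cdot\operatorname{td}(X)$ is precisely how one unpacks that remark. The coefficient bookkeeping you outline is accurate.
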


The next lemma can be deduced, for instance, from \cite[Th. 15.4]{Fulton1998}.

\begin{lemma}\label{lem:2.3}
Let $X$ be a smooth projective fourfold,
and $\rho\colon \widetilde{X}\to 
X$ be a blowup with a smooth center 
$C \subset X$ and the exceptional divisor $E=\rho^{-1}(C)$.
\begin{enumerate}
 \item 
If $C$ is a curve of genus $g(C)$, then
\[
c_2(\widetilde X)=\rho^*c_2(X)+ (6g(C)-6 -K_X\cdot C) A,
\]
where $A$ is the class of the fiber of $E\to C$. 
 \item 
If $C$ is a surface, then 
\[
c_2(\widetilde X)=\rho^*c_2(X)+\rho^*C+\rho^*K_X\cdot E.
\] 
\end{enumerate}
\end{lemma}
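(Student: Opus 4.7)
The plan is to invoke Fulton's blowup formula (\cite[Th.~15.4]{Fulton1998} and the subsequent example computing Chern classes), which states that
\[
c(T_{\widetilde X}) = \rho^* c(T_X) + j_*\Phi,
\]
where $j\colon E\hookrightarrow \widetilde X$ is the inclusion of the exceptional divisor, $\pi\colon E=\PP(\NNN)\to C$ is the projection, $\NNN=\NNN_{C/X}$, and $\Phi$ is a universal polynomial expression in $\xi := c_1(\OOO_E(1))$ and the pullbacks $\pi^*c_i(\NNN)$. Extracting the codimension-two part $\Phi_2$ and computing $j_*\Phi_2$ in each of the two cases will give the lemma; the only auxiliary inputs are adjunction, the Whitney product formula, and the projection formula $j_*j^*\alpha=\alpha\cdot E$.

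For part (i), $\mathrm{codim}\,C=3$ and $E$ is a $\PP^2$-bundle over the curve $C$. The group of $1$-cycles on $E$ is generated by $\pi^*[C]$ and by curves in the fibers; classes coming from $\pi^*[C]$ push forward under $\rho$ into cycles on $C\subset X$ of too high codimension to contribute a new summand modulo $\rho^*c_2(X)$, and the remaining contribution lies in $\ZZ\cdot A$. Thus $j_*\Phi_2 = \lambda\cdot A$ for some $\lambda\in\ZZ$. To identify $\lambda$, I compute $\Phi_2$ via the universal formula and use the adjunction identity
\[
\deg c_1(\NNN) = \deg K_C - K_X\cdot C = (2g(C)-2) - K_X\cdot C,
\]
together with $c(T_X|_C)=c(T_C)\,c(\NNN)$. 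After substitution, the coefficient that emerges is exactly $\lambda = 6g(C)-6-K_X\cdot C$.

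For part (ii), $\mathrm{codim}\,C=2$ and $E$ is a $\PP^1$-bundle, so $\Phi_2$ is itself a $2$-cycle on $E$. Two natural summands contribute to $j_*\Phi_2$: the $\xi$-term projects under $\rho$ to the class $\rho^*C$ (since $\pi_*\xi=[C]$ on $C$ and $\rho\circ j = \pi$ followed by inclusion), while the $\pi^*c_1(\NNN)$-term is identified via adjunction $c_1(\NNN)=(K_C-K_X)|_C$ and the projection formula with a multiple of $\rho^*K_X\cdot E = j_*(j^*\rho^*K_X)$. Assembling the two contributions from the universal formula produces exactly $\rho^*C + \rho^*K_X\cdot E$ as stated.

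The main obstacle is not conceptual but a bookkeeping issue: one must keep careful track of the tautological twist conventions for $\OOO_E(\pm 1)$ in Fulton's formula, the adjunction sign, and the identification $j_*j^*(-) = (-)\cdot E$ that converts restrictions to $E$ back into codimension-two classes on $\widetilde X$. Once these conventions are fixed, both parts reduce to a direct, short computation in the Chow ring of the projective bundle $\PP(\NNN)$.
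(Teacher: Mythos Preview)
Your proposal is correct and follows exactly the approach indicated in the paper: the paper does not give a proof of this lemma at all, merely remarking beforehand that it ``can be deduced, for instance, from \cite[Th.~15.4]{Fulton1998}.'' Your outline unpacks that citation by extracting the relevant graded piece of Fulton's blowup formula and rewriting it via adjunction and the projection formula, which is precisely what one has to do; your closing caveat about sign and tautological-twist bookkeeping is apt but does not constitute a gap.
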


\begin{lemma}
\label{lemma-intersection-theory} 
In the notation of Lemma \textup{\ref{lem:2.3}}, 
for a divisor $H$ on $X$
the following relations in the Chow ring $A (\widetilde X)$ hold:
\begin{enumerate}
 \item 
if $C$ is a curve, then 
\[
\begin{array}{l}
(\rho^* H)^{4}= H^4,\quad (\rho^* H)^{3}\cdot E=(\rho^* H)^{2}\cdot E^2= 0,
\\[7pt]
\rho^* H\cdot E^3 = H^3\cdot C,
\quad 
E^4=-K_X \cdot C+2g(C)-2\,;
\end{array}
\]
 \item 
if $C$ is a surface, then 
\[
\begin{array}{l}
(\rho^* H)^{4}= H^4,\quad (\rho^* H)^{3}\cdot E=0,\quad (\rho^* H)^{2}\cdot E^2= -C\cdot
H^2,
\\[7pt]
\rho^* H\cdot E^3 =-H|_ C\cdot K_C+K_X\cdot H\cdot C,
\\[7pt]
E^4=c_2(X)\cdot C+K_X|_C\cdot K_C-c_2(C)-K_X^2\cdot C.
\end{array}
\]
\end{enumerate}
\end{lemma}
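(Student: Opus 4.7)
The plan is to carry out standard intersection-theoretic computations on the blowup $\rho\colon\widetilde X\to X$, relying on only three ingredients: the projection formula; the Grothendieck relation for the projectivized normal bundle together with the Segre-class pushforward formulas; and the normal bundle exact sequence $0\to\TTT_C\to\TTT_X|_C\to\NNN_{C/X}\to 0$. Throughout, write $i\colon C\hookrightarrow X$, $j\colon E\hookrightarrow\widetilde X$ for the inclusions and $p=\rho|_E\colon E\to C$ for the bundle projection, so that $E\cong\PP(\NNN_{C/X})$, and set $\xi:=c_1(\OOO_E(1))$. The identification $\OOO_{\widetilde X}(E)|_E\cong\OOO_E(-1)$ yields the key relation $E|_E=-\xi$ in $A^1(E)$.

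The equality $(\rho^*H)^4=H^4$ is immediate from the projection formula, and the vanishings among the mixed powers $(\rho^*H)^i\cdot E^j$ with $i+j=4$ follow from $\rho_*(E^s)=i_*p_*((-\xi)^{s-1})$ together with the fact that $p_*$ kills any class on $E$ whose dimension exceeds $\dim C$. The remaining nonzero intersection numbers reduce, via the projection formula and $E^s=j_*((-\xi)^{s-1})$, to Segre pushforwards $p_*(\xi^k)$ on $C$, which are extracted from the Grothendieck relation $\sum_{i=0}^{r}(-1)^i p^*c_i(\NNN_{C/X})\xi^{r-i}=0$ in $A(E)$, where $r=\operatorname{codim}_X C$.

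In case (i), where $\NNN_{C/X}$ has rank $3$, the Segre formulas give $p_*(\xi^2)=1$ and $p_*(\xi^3)=-c_1(\NNN_{C/X})$; these, combined with the projection formula, yield the stated value of $\rho^*H\cdot E^3$ and rewrite $E^4$ as $\deg_C c_1(\NNN_{C/X})$. I then substitute the identity $c_1(\NNN_{C/X})=K_C-K_X|_C$ coming from the normal bundle sequence, together with $\deg K_C=2g(C)-2$, to obtain $E^4=-K_X\cdot C+2g(C)-2$. In case (ii), where $\NNN_{C/X}$ has rank $2$, I use $p_*(\xi)=1$, $p_*(\xi^2)=-c_1(\NNN_{C/X})$ and $p_*(\xi^3)=c_1(\NNN_{C/X})^2-c_2(\NNN_{C/X})$ to compute $(\rho^*H)^2\cdot E^2$ and $\rho^*H\cdot E^3$ directly, and to write $E^4$ as $c_2(\NNN_{C/X})\cdot C-c_1(\NNN_{C/X})^2\cdot C$. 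The last step is to eliminate $c_1(\NNN_{C/X})$ and $c_2(\NNN_{C/X})$ using the degree-one and degree-two components of the identity $c(\TTT_X|_C)=c(\TTT_C)\cdot c(\NNN_{C/X})$, which express them in terms of $K_X|_C$, $K_C$, $c_2(X)|_C$, and $c_2(C)$; plugging in and simplifying produces the stated formula.

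The argument is essentially bookkeeping, so the main obstacle is nothing deeper than careful sign management (the minus sign in $E|_E=-\xi$ propagates through every formula) and the slightly tedious expansion of $c_2(\NNN_{C/X})$ in the surface case, where several terms involving $K_C^2$ must cancel. No input beyond the three tools listed at the outset is needed.
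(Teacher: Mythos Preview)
Your proposal is correct and follows essentially the same approach as the paper: the paper's proof is a one-line reference to Fulton, invoking exactly the projection formula, the identification $E=\PP_C(\NNN_{C/X}^*)$, and the relation $E|_E=c_1(\OOO(-1))$, which is precisely your $E|_E=-\xi$ in a different (but equivalent) projectivization convention. Your write-up simply fleshes out the bookkeeping the paper leaves implicit, via the Segre pushforwards and the normal bundle sequence.
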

\begin{proof} 
The proof is straightforward. It uses the projection formula,
the identification $E=\PP_C(\NNN_{C/X}^*)$, and the equality
$E|_E= c_1(\OOO_{\PP_C(\NNN_{C/X}^*)}(-1))$ in the Chow ring $A(E)$, 
see e.g. \cite{Fulton1998}.
\end{proof}

\begin{lemma}\label{lem:smooth section}
Let $X\subset\PP^N$ be a smooth projective variety
of dimension $n$. Assume that $X$ contains a $k$-dimensional 
linear subspace $\Lambda$, and let $H$ be a general hyperplane section
of $X$ containing $\Lambda$. If $n>2k$, then $H$ is smooth.
\end{lemma}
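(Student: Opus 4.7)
The plan is to apply Bertini's theorem outside $\Lambda$ and handle the points of $\Lambda$ itself by a direct dimension count on the dual projective space.

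First I would observe that the hyperplane sections of $X$ passing through $\Lambda$ form a linear subsystem $\LLL\subset |\OOO_X(1)|$ whose set-theoretic base locus equals $\Lambda$, simply because the intersection in $\PP^N$ of all hyperplanes containing $\Lambda$ is $\Lambda$ itself. Since $X$ is smooth, Bertini's theorem gives that a general member $H\in\LLL$ is smooth on $X\setminus\Lambda$. It therefore remains to prove smoothness of $H$ at every point of $\Lambda$.

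For a point $p\in\Lambda$, the hyperplane section $H=X\cap\tilde H$ fails to be smooth at $p$ exactly when $\tilde H$ contains the projective tangent space $T_pX\subset\PP^N$. Since $\Lambda\subset X$ is linear, $\Lambda\subset T_pX$ for every $p\in\Lambda$, so any $\tilde H\supset T_pX$ automatically belongs to $\LLL$. I would then introduce the incidence variety
\[
I=\{(p,\tilde H)\in\Lambda\times(\PP^N)^{*}\,:\,T_pX\subset\tilde H\},
\]
and project it to each factor. The fiber of $I\to\Lambda$ over $p$ parametrizes hyperplanes containing the $n$-dimensional linear space $T_pX$, hence is a $\PP^{N-n-1}$. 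Consequently $\dim I=k+(N-n-1)$.

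Comparing with $\dim\LLL=N-k-1$, the image of the second projection $I\to\LLL$ is a proper closed subvariety precisely when
\[
k+N-n-1<N-k-1,\qquad\text{i.e.}\qquad n>2k,
\]
which is our hypothesis. Thus for a general $\tilde H\in\LLL$ there is no $p\in\Lambda$ with $T_pX\subset\tilde H$, so $H=X\cap\tilde H$ is smooth along $\Lambda$ as well, completing the argument. There is no genuine obstacle here; the only thing to be careful about is the identification of the base locus of $\LLL$ with $\Lambda$ so that Bertini applies cleanly away from $\Lambda$, and the numerical inequality above, which is exactly the assumption $n>2k$.
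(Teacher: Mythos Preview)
Your proof is correct and follows essentially the same approach as the paper: Bertini handles smoothness off $\Lambda$, and a dimension count in the dual space shows that the locus of hyperplanes through $\Lambda$ tangent to $X$ at some point of $\Lambda$ has dimension $k+N-n-1<N-k-1=\dim\LLL$. The paper simply states this inequality directly, while you set up the incidence variety explicitly and note that $\Lambda\subset T_pX$ forces any such tangent hyperplane to lie in $\LLL$; otherwise the arguments are identical.
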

\begin{proof}
By Bertini theorem, $H$ is smooth outside $\Lambda$.
We use a parameters count. The set $\mathcal{H}$ of all hyperplane sections of 
$X\subset\PP^N$
that are singular at some point of $\Lambda$ is Zariski closed in ${\PP^N}^{*}$,
of dimension 
\[\dim\mathcal{H}
\le k+ N-(n+1)< N-(k+1). 
\]
On the other hand, 
the set of all hyperplane sections containing $\Lambda$ has dimension $N-(k+1)$. 
Now the result follows.
\end{proof}

\begin{lemma}
Let $W=W_d\subset \PP^{d+2}$ be a del Pezzo fourfold of degree $d\ge 3$.
Then $W$ contains a line. Given a line $l\subset W$, 
one of the following holds:
\begin{equation}\label{equation-normal-bundle}
\begin{aligned}
\NNN_{l/W}\simeq \OOO_{\PP^1}\oplus \OOO_{\PP^1}\oplus \OOO_{\PP^1}(1),& 
\qquad (a)
\\
\NNN_{l/W}\simeq \OOO_{\PP^1}(-1)\oplus \OOO_{\PP^1}(1)\oplus \OOO_{\PP^1}(1).& 
\qquad (b)
\end{aligned}
\end{equation}
\end{lemma}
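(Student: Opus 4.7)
The plan is to treat the two claims separately: first the existence of a line on $W=W_d$, then the normal-bundle dichotomy.

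For existence, since $d\in\{3,4,5\}$, the claim is classical in each case. The unique $W_5$ is a codimension-two linear section of $\Gr(2,5)\subset\PP^9$ and inherits lines from those of the Grassmannian. The del Pezzo fourfold $W_4$ is a smooth intersection of two quadrics in $\PP^6$, and lines on it exist by a standard dimension count in the Hilbert scheme of lines in $\PP^6$; an analogous incidence-variety count on cubic fourfolds in $\PP^5$ settles $d=3$. Alternatively, one may invoke a general existence result for lines on smooth Fano manifolds of small coindex.

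For the normal-bundle computation, by Grothendieck's splitting theorem we can write, on $l\simeq\PP^1$,
\[
\NNN_{l/W}\simeq \OOO_{\PP^1}(a_1)\oplus \OOO_{\PP^1}(a_2)\oplus \OOO_{\PP^1}(a_3),\qquad a_1\le a_2\le a_3.
\]
Using $-K_W\sim 3H$, $H\cdot l=1$, and $\deg K_l=-2$, adjunction for $l\subset W$ yields
\[
a_1+a_2+a_3=\deg\det\NNN_{l/W}=\deg K_l-K_W\cdot l = -2-(-3)=1.
\]
On the other hand, the normal bundle exact sequence for $l\subset W\subset\PP^{d+2}$ embeds $\NNN_{l/W}$ as a subbundle of $\NNN_{l/\PP^{d+2}}\simeq \OOO_{\PP^1}(1)^{\oplus(d+1)}$. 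The inclusion of any summand $\OOO_{\PP^1}(a_i)\hookrightarrow\NNN_{l/W}$ composed with this embedding gives an injection $\OOO_{\PP^1}(a_i)\hookrightarrow \OOO_{\PP^1}(1)^{\oplus(d+1)}$, which forces $a_i\le 1$, because any morphism $\OOO_{\PP^1}(k)\to\OOO_{\PP^1}(1)$ with $k\ge 2$ vanishes.

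The arithmetic conclusion is then immediate: the only integer triples with $a_1\le a_2\le a_3\le 1$ and $a_1+a_2+a_3=1$ are $(0,0,1)$ and $(-1,1,1)$, corresponding to alternatives (a) and (b) of the statement. The main obstacle is really just the existence of a line; once that is granted, the dichotomy follows at once from Grothendieck splitting together with the degree bound coming from the linear embedding into $\PP^{d+2}$.
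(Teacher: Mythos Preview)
Your argument is correct. The route differs from the paper's in two places. For existence, the paper argues uniformly: two generic hyperplane sections of $W$ cut out a smooth del Pezzo surface $F$ of degree $d$, anticanonically embedded, and such a surface carries lines. For the upper bound $a_i\le 1$, the paper uses the exact sequence $0\to\NNN_{l/F}\to\NNN_{l/W}\to\NNN_{F/W}|_l\to 0$ with $\NNN_{F/W}|_l\simeq\OOO_{\PP^1}(1)^{\oplus 2}$, whereas you use the inclusion $\NNN_{l/W}\hookrightarrow\NNN_{l/\PP^{d+2}}\simeq\OOO_{\PP^1}(1)^{\oplus(d+1)}$. Both reduce to the vanishing of $\operatorname{Hom}(\OOO_{\PP^1}(k),\OOO_{\PP^1}(1))$ for $k\ge 2$. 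Your version has the advantage that it applies immediately to \emph{any} line $l\subset W$, without having to produce a smooth del Pezzo surface section through that particular line; the paper's version, on the other hand, gives the existence of a line for all $d\ge 3$ in one stroke rather than case by case.
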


\begin{proof} Let $H_1, H_2$ be two generic hyperplane sections of $W$. 
Then $F= H_1\cap H_2 $ is a smooth del Pezzo surface of degree $d$ 
in $W$ 
anticanonically embedded in $\PP^{d+2}$, which contains some lines. 
Let $l$ be a line on $F$, and consider
 the exact sequence
$$0\to \NNN_{l/F}\to \NNN_{l/W}\to \NNN_{F/W}\vert_l\to 0\,.$$ 
We have
$$\NNN_{l/F}\simeq\OOO_l(1),\ \NNN_{F/W}\vert_l\simeq 
\OOO_W(1)\vert_l\oplus \OOO_W(1)\vert_l,\ 
\NNN_{l/W}\simeq \OOO_l(a)
\oplus \OOO_l(b)\oplus \OOO_l(c)\,,$$
where $a+b+c=1$. Furthermore, $a,b,c\le 1$ since $\operatorname{Hom} 
(\OOO_l(x), \OOO_l(y))=0$ for $x>y$. 
It follows also that $a,b,c\ge -1$. Assuming that $a\le b\le c$ 
we obtain that
$(a,b,c)\in\{(0,0,1),(-1,1,1)\}$, as stated. 
\end{proof} 

\begin{corollary}\label{cor:2.8}
Let $W=W_d\subset \PP^{d+2}$ be a del Pezzo fourfold of degree $d\ge 3$.
Then the Hilbert scheme $\mathfrak L(W)$ of lines on $W$ 
is reduced, nonsingular, and $\dim\mathfrak L(W)=4$. 
Through any point $P\in W$ passes a family of lines on $W$ 
of dimension $\ge 1$.
For any $d\ge 4$ this family 
has dimension $1$.
\end{corollary}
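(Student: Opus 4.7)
My plan is to derive all the claims from the normal-bundle trichotomy of the preceding lemma by means of standard deformation theory on the Hilbert scheme of lines, together with a classical covering statement for del Pezzo threefolds.

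For smoothness and dimension of $\mathfrak L(W)$, at any $[l] \in \mathfrak L(W)$ the tangent space is $H^0(l,\NNN_{l/W})$ and the obstructions lie in $H^1(l,\NNN_{l/W})$. In each of the two normal-bundle types of the preceding lemma, every line-bundle summand of $\NNN_{l/W}$ has degree $\ge -1$ on $\PP^1$, so $H^1(l,\NNN_{l/W}) = 0$ and
\[
h^0(l,\NNN_{l/W}) = \chi(\NNN_{l/W}) = \deg \NNN_{l/W} + \rk \NNN_{l/W} = 1 + 3 = 4.
\]
Hence $\mathfrak L(W)$ is smooth of dimension $4$ at every point, and in particular reduced.

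Let $I = \{(l,P) : P \in l\} \subset \mathfrak L(W) \times W$ be the universal family of lines, a $\PP^1$-bundle over $\mathfrak L(W)$, hence smooth of dimension $5$. The fiber of the second projection $q\colon I \to W$ over $P$ is the subscheme $\mathfrak L_P(W)$ of lines through $P$, with tangent space $H^0(l,\NNN_{l/W}(-P))$ at $[l]$. In type $(a)$ this twisted bundle is $\OOO_{\PP^1}(-1)^{\oplus 2} \oplus \OOO_{\PP^1}$, yielding $h^0 = 1$ and $h^1 = 0$, so $\mathfrak L_P(W)$ is smooth of dimension $1$ at $[l]$; in type $(b)$ it is $\OOO_{\PP^1}(-2) \oplus \OOO_{\PP^1}^{\oplus 2}$ with $h^0 = 2$ and $h^1 = 1$, giving the expected dimension $h^0 - h^1 = 1$ and hence $\dim_{[l]}\mathfrak L_P(W) \ge 1$. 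To see that every $P \in W$ actually lies on some line, I would take a general hyperplane section $W' = W \cap H$ through $P$: the sub-family of hyperplanes $H \ni P$ that contain the embedded projective tangent space $T_PW$ forms a proper $\PP^{d-3}$ inside the $\PP^{d+1}$-family of hyperplanes through $P$, so a general such $H$ avoids this locus; combined with Bertini, $W'$ is then a smooth del Pezzo threefold of degree $d$ containing $P$, which by the classical covering theorem for del Pezzo threefolds of degree $d \ge 3$ carries a line through $P$.

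For the upper bound when $d \ge 4$, assume for contradiction $\dim_{[l]}\mathfrak L_P(W) \ge 2$ at some $(P,l)$. The lines in a $2$-dimensional component through $P$ sweep out a $3$-dimensional cone $S_P \subset W$ with apex $P$. Each such line lies in the embedded projective tangent space $T_PW \cong \PP^4 \subset \PP^{d+2}$, so $S_P \subset W \cap T_PW$ and $\dim(W \cap T_PW) \ge 3$. But the expected dimension of the linear section $W \cap T_PW$ is $\dim W + \dim T_PW - (d+2) = 6 - d \le 2$. The main obstacle, and the heart of the argument, is to verify that this expected dimension is actually attained even though $T_PW$ is a non-generic linear subspace for which Bertini does not apply. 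For $d = 4$, writing $W_4 = Q_1 \cap Q_2 \subset \PP^6$, smoothness of $W_4$ forces each member of the pencil $\lambda Q_1 + \mu Q_2$ to have corank at most $1$, so no member contains $T_PW_4 = \PP^4$; consequently $Q_1|_{T_PW_4}$ and $Q_2|_{T_PW_4}$ are linearly independent, and their common zero set is a surface. For $d = 5$, the bound $\dim(W_5 \cap T_PW_5) \le 2$ is obtained from the description $W_5 = \Gr(2,5) \cap \PP^7$ by a direct Schubert-type computation. Either way, $\dim S_P = 3 > \dim(W \cap T_PW)$ yields the desired contradiction, and combined with the lower bound above gives $\dim \mathfrak L_P(W) = 1$ for $d \ge 4$.
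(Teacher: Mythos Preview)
Your argument is essentially correct but diverges from the paper's in the treatment of the upper bound for $d\ge 4$, and your case-by-case approach there has a loose step.

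For the smoothness and dimension of $\mathfrak L(W)$ you match the paper exactly. For the lower bound $\dim\mathfrak L_P(W)\ge 1$, your route via the pointed normal bundle $\NNN_{l/W}(-P)$ together with the covering of del Pezzo \emph{threefolds} by lines is valid but different from the paper: for $d=3$ the paper argues directly that the tangent section is a singular cubic in $\PP^4$, whose equation has only quadratic and cubic parts, so the cone of lines through $P$ is at least one-dimensional; for $d\ge 4$ the paper gets the lower bound a posteriori from surjectivity of the incidence projection $I\to W$ (forced by the upper bound and $\dim I=5$). Your approach imports an external fact (coverage of del Pezzo threefolds by lines), while the paper's is self-contained.

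For the upper bound when $d\ge 4$ the paper gives a short uniform argument that you should prefer to your case split. Since $\operatorname{codim}_{\PP^{d+2}}\overline{T_PW}=d-2\ge 2$, the tangent section $T=W\cap\overline{T_PW}$ cannot be a divisor: otherwise, as $\Pic(W)=\ZZ\cdot[H]$ and $T$ lies in a hyperplane section, one would have $T=W\cap H_1$ for any hyperplane $H_1\supset\overline{T_PW}$; choosing such an $H_1$ through a point $P'\in W\setminus\overline{T_PW}$ yields a contradiction. This handles all $d\ge 4$ at once. Your $d=4$ argument needs one more step: linear independence of $Q_1|_{T_PW}$ and $Q_2|_{T_PW}$ does not by itself rule out a common $\PP^3$-component (both restricted quadrics are singular at $P$ and could in principle be reducible); you still need that $W$ contains no $\PP^3$, which is exactly the $\Pic(W)=\ZZ$ observation underlying the paper's proof. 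Your $d=5$ case is only a pointer to a computation. Replacing the case analysis by the paper's Picard-group argument closes these gaps uniformly.
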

\begin{proof}
Since $H^1(\PP^1, \NNN_{l/W}) =0$, 
using deformation theory we obtain that
$\mathfrak L(W)$ is reduced, nonsingular and 
$\dim \mathfrak L(W) = \dim H^0(\PP^1, \NNN_{l/W}) =4$. 

If $d=3$, then $W$ is a smooth cubic in $\PP^5$.
The tangent section $T$ at a point $P$ of $W$ 
is a singular cubic threefold in the projective tangent space 
$\overline {T_P}W\simeq \PP^4$. 
In an affine chart in $\PP^4$ centered at $P$, the equation of $T$ has just
the quadric and cubic homogeneous terms. The common zeros of these 
form a cone swept out by a family of lines through $P$ 
of dimension $\ge 1$.

Let further $d\ge 4$, and so $\operatorname{codim}_{\PP^{d+2}} W\ge 2$. 
Then the tangent section $T=W\cap \overline {T_P} W$ 
cannot be a divisor in $W$. Indeed, otherwise this should be 
a hyperplane section generating the Picard group $\Pic (W)$. 
However, for a generic point $P'\in W\backslash \overline {T_P}W$
there is a hyperplane $H\subset \PP^{d+2}$ through $P'$, which
contains $\overline {T_P}W$. This leads to a contradiction. 

 Any line through $P$ in $W$ is contained in the tangent space 
$\overline {T_P}W$, hence also in $T$, which is at most two-dimensional. 
Thus the family of lines in $W$ through $P$ is at most one-dimensional. 
At a generic point of $W$, it should be one-dimensional, because 
$\dim \mathfrak L(W)=4$. Therefore for any point $P\in W$, this family is of
dimension 1. 
\end{proof}

\section{Intersection of two quadrics in $ \PP^6$}
\label{section-2.2}
It is known that any smooth intersection of two quadrics in $\PP^5$ 
contains a cylinder \cite[Prop. 5.0.1]{Kishimoto-Prokhorov-Zaidenberg}. 
Similarly, this holds for any smooth intersection $W_{2\cdot 2}$
of two quadrics in $\PP^6$, see Theorem \ref{cor:cylinder in W22}. 
The proof is based on 
a standard
construction of $W_{2\cdot 2}$ via 
a Sarkisov link, see \eqref{equation-sarkisov-link-2.2}. 
For the reader's convenience,
we recall this construction and 
some of its specific properties used in Sect. \ref{section-g=12} below. 

\begin{proposition}\label{sarkisov-link-2.2}
Let as before $W=W_{2\cdot 2}$ be a smooth 
intersection of two quadrics in $\PP^6$,
and let $H$ be the ample generator of $\Pic(W)$ \textup(the class of 
a hyperplane section\textup).
 Given a line $l\subset W$,
the projection with center $l$ is a birational map $\phi: W \dashrightarrow \PP^{4}$,
which fits in the diagram 
\begin{equation}
\label{equation-sarkisov-link-2.2}
\xymatrix{
&\widetilde W\ar[dr]^{\varphi}\ar[dl]_{\rho}&
\\ 
W\ar@{-->}[rr]^{\phi}&&\PP^4
} 
\end{equation}
where $\rho$ is the blowup of $l$ with exceptional divisor $E$, and $\varphi$ is 
a birational morphism defined by the linear system $|\rho^*H-E|$. Furthermore, the following hold.
\begin{enumerate}
\item 
The $\varphi$-exceptional locus is an irreducible divisor $D\subset \widetilde W$. 
\item 
Letting $L$ be the ample generator of $\Pic(\PP^4)$
we obtain 
\[
\varphi^* L\sim \rho^*H-E,\quad
D\sim 2\rho^*H-3E,\quad
\rho^* H\sim 3\varphi^* L-D,\quad 
\rho^* E\sim 2\varphi^* L-D.
\]
\item
The divisor $\rho(D)$ is swept out by lines meeting $l$.
The image $F=\varphi(D) $ is a surface in $\PP^4$ of degree $5$ 
with at worst isolated singularities. The singularities of 
$F$ are the $\phi$-images of planes in $W$ containing $l$. 
For a general line $l\subset W$ the surface $F$ is smooth,
and $\varphi$ is the blowup with center $F$.
\item
$W\setminus \rho(D)\simeq \PP^4 \setminus \varphi(E)$,
where $\rho(D)$ is cut out on $W$ by a quadric in $\PP^6$, and $\varphi(E)$ 
is a quadric in $\PP^4$. 
\item
The quadric $\varphi(E)\subset \PP^4$ is singular, and 
its singular locus coincides with the locus of points $P\in\varphi(E)$, 
such that the restriction 
$\varphi|_E: E\to \varphi(E)$ is 
not an isomorphism over $P$.
\end{enumerate}
\end{proposition}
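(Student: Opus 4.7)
The overall strategy is to resolve the indeterminacy of $\phi$ by blowing up $l$, identify $\varphi$ as the morphism defined by the proper transform of the linear system of hyperplane sections of $W$ containing $l$, and then extract all remaining information from intersection-theoretic and canonical-class computations on $\widetilde W$ using Lemma \ref{lemma-intersection-theory}(i). Hyperplanes in $\PP^6$ containing $l$ form a $\PP^4$, and their proper transforms on $\widetilde W$ fill out the base-point-free system $|\rho^*H - E|$, which defines $\varphi$; the map is birational because a generic plane $\Pi \supset l$ meets $W$ in $l$ plus a single residual point (since $\Pi \cap Q_i = l + m_i$ for each quadric $Q_i$ cutting out $W$, and generically $m_1 \cap m_2$ is one point off $l$). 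From $K_{\widetilde W} = -3\rho^*H + 2E$ and $\varphi^* K_{\PP^4} = -5\rho^*H + 5E$ one obtains $K_{\widetilde W} - \varphi^* K_{\PP^4} \sim 2\rho^*H - 3E$. Any $\varphi$-exceptional component $D_i$ satisfies $D_i \cdot (\varphi^*L)^3 = 0$, which via Lemma \ref{lemma-intersection-theory}(i) (with $H \cdot l = 1$ and $-K_W \cdot l = 3$) forces $D_i \sim c_i(2\rho^*H - 3E)$ with $c_i \in \ZZ_{>0}$ (since $2\rho^*H - 3E$ is primitive in $\Pic$); the discrepancy identity with positive discrepancies $a_i \geq 1$ then forces a single irreducible exceptional component of multiplicity one, proving (i) and the first equivalence of (ii). The remaining two relations of (ii) follow by inverting the basis change from $(\rho^*H, E)$ to $(\varphi^*L, D)$.

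For (iii), the fiber of $\phi$ over a point of $\PP^4$ corresponding to a plane $\Pi \supset l$ is the residual to $l$ in $\Pi \cap W$, which by the description above is a single point when $m_1 \neq m_2$ and the line $m = m_1 = m_2$ otherwise; hence $\rho(D)$ is swept by lines on $W$ meeting $l$, and $F = \varphi(D)$ parametrizes these lines. I would extract $\deg F$ from $D^2 \cdot (\varphi^*L)^2 = -5$ (Lemma \ref{lemma-intersection-theory}(i)): once $\varphi|_D: D \to F$ is identified with a $\PP^1$-bundle structure, the projection formula identifies this number with $-\deg F$. This is where the main obstacle lies: one must show simultaneously that for generic $l$ the surface $F$ is smooth and that $\varphi$ is literally the blowup of $F$. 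Singularities of $F$ arise exactly at images of planes $\Pi \subset W$ containing $l$ (such a plane is collapsed to a point over which a section of $l$ gives a fat fiber), and a parameter count on the family of planes contained in $W$ (which has expected dimension $0$ in $\Gr(3,7)$ since containment in each quadric imposes $6$ conditions) rules these out for generic $l$; the identification $\varphi = \operatorname{Bl}_F \PP^4$ then follows by matching the normal bundle of $F$ in $\PP^4$ against the $\PP^1$-bundle structure of $D \to F$.

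For (iv) and (v), the pushforwards $\rho_* D \sim 2H$ and $\varphi_* E \sim 2L$ identify $\rho(D)$ and $\varphi(E)$ with quadric sections of $W$ and $\PP^4$ respectively; the latter comes from $E \cdot (\varphi^*L)^3 = 2$ (Lemma \ref{lemma-intersection-theory}(i)) combined with the birationality of $\varphi|_E$, which holds because $\varphi$ is an isomorphism away from $D$ and $E \not\subset D$. The isomorphism $W \setminus \rho(D) \simeq \PP^4 \setminus \varphi(E)$ is then immediate from $\widetilde W \setminus (D \cup E) \simeq W \setminus \rho(D)$ via $\rho$ and $\simeq \PP^4 \setminus \varphi(E)$ via $\varphi$. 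Finally, each fiber $E_x \simeq \PP^2$ over $x \in l$ maps isomorphically onto a plane $\Lambda_x \subset \varphi(E)$, and since a smooth quadric threefold contains no planes (its maximal linear subspaces are lines), $\varphi(E)$ must be singular; the common point of the planes $\{\Lambda_x\}_{x \in l}$ is the vertex of the quadric cone $\varphi(E)$ and is simultaneously its unique singular point and the unique point with non-reduced preimage under $\varphi|_E$.
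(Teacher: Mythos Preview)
Your overall architecture is sound and in several places cleaner than the paper's. Where the paper invokes the Cone Theorem and a Riemann--Roch computation $\dim|2\rho^*H-3E|=0$ to show $D$ exists and is irreducible, you extract the class of every $\varphi$-exceptional component from the constraint $D_i\cdot(\varphi^*L)^3=0$ together with primitivity of $2\rho^*H-3E$, and then force a single component via the discrepancy identity $K_{\widetilde W}-\varphi^*K_{\PP^4}\sim 2\rho^*H-3E$. That is a nice, self-contained alternative. Likewise, your geometric proof of birationality (residual point of a generic plane through $l$) is more transparent than the Mori-theoretic one.

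There are, however, two genuine gaps.

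\textbf{Part (v).} Your final sentence is wrong as stated. You assert that the planes $\Lambda_x=\varphi(E_x)$ share a \emph{unique} common point which is simultaneously the unique singular point of $\varphi(E)$ and the unique point of non-isomorphy of $\varphi|_E$. But the singular locus of $\varphi(E)$ need not be a point: it is a point when $\NNN_{l/W}\simeq\OOO\oplus\OOO\oplus\OOO(1)$ and a line when $\NNN_{l/W}\simeq\OOO(-1)\oplus\OOO(1)\oplus\OOO(1)$ (cf.\ the paper's Remark following the proposition). So your argument does not establish the claimed coincidence in general. The paper's approach is to observe that $-K_E\sim 3(\rho^*H-E)|_E\sim (\varphi|_E)^*(-K_{\varphi(E)})$, i.e.\ $\varphi|_E$ is crepant; then over any smooth point of $\varphi(E)$ there are no crepant exceptional divisors, so $\varphi|_E$ is an isomorphism there, while conversely $E$ is smooth so $\varphi|_E$ cannot be an isomorphism over a singular point. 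Your argument that $\varphi(E)$ is singular because it contains a plane is perfectly valid (and arguably nicer than the paper's degree-$5$-surface observation), but you must replace the ``unique vertex'' claim by the crepant argument.

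\textbf{Part (iii).} Two steps are only sketched. First, ``expected dimension $0$'' for planes in $W$ is heuristic; you need either the precise statement that $W_{2\cdot2}\subset\PP^6$ contains finitely many (in fact $64$) planes, or an actual argument. Second, ``matching the normal bundle of $F$ against the $\PP^1$-bundle structure of $D\to F$'' is not a proof that $\varphi$ is the blowup of $F$; the clean way is to observe that when all fibres of $\varphi$ have dimension $\le 1$ one is in the situation of Ando's theorem on extremal contractions, which yields both smoothness of $F$ and the identification $\varphi=\operatorname{Bl}_F\PP^4$. You should cite or reproduce that step.
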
 

\begin{proof}
Since $l$ is a scheme-theoretic intersection of members of $|H-l|$,
the linear system $|\rho^*H-E|$ is base points free.
Hence the divisor $-K_{\widetilde W}=\rho^*H+2(\rho^*H-E)$ is ample, i.e. 
$\widetilde W$ is a Fano fourfold with $\rk \Pic(\widetilde W)=2$.
By the Cone Theorem there exists a Mori contraction $\varphi: \widetilde W\to U$ different from $\rho$.
If $ \widetilde C\subset \widetilde W$ is the proper transform of a line $C\subset W$ meeting $l$,
then $(\rho^*H-E)\cdot \widetilde C=0$. In particular, the divisor $\rho^*H-E$ is not ample.
So, $\rho^*H-E$ yields a supporting linear function for the extremal ray generated 
by the curves contained in the
fibers of $\varphi$. Furthermore, we can write $\rho^*H-E=\varphi^* L$, where $L$ is 
the ample generator of 
$\Pic(U)\simeq \ZZ$. 
We have $L^4=(\rho^*H-E)^4=1$.
By the Riemann-Roch and Kodaira Vanishing Theorems we have
$\dim |\rho^*H-E | =4$.
Therefore, $|\rho^*H-E |$ defines a birational
morphism $\widetilde W\to \PP^4$, which coincides actually 
with the map $\varphi$.

Since $\dim |2\rho^*H-3E | =0$ and $(\rho^*H-E)^3\cdot (2\rho^*H-3E)=0$,
the linear system $|2\rho^*H-3E |$ contains a unique divisor $D$ 
contracted by $\varphi$. Since $\rk \Pic (\widetilde W)=2$, the divisor
$D$ is irreducible. This proves (i) and (ii).

Besides, $L^2\cdot D^2=-5$, and so the image $F=\varphi(D)$ is a quintic surface in $\PP^4$.
Since $\phi$ is a projection and $W$ is an intersection of quadrics, 
any positive dimensional fiber of the birational morphism $\varphi$ is the proper 
transform either of a line meeting $l$,
or of a plane containing $l$. If there is no plane in $W$ containing $l$,
then any 
fiber of $\varphi: D\to F$ has dimension at most $1$. 
In this case $F$ is smooth, and $\varphi$ is the blowup of $F$
(see \cite{Ando1985}). 
In general, there is a finite number of two-dimensional fibers, 
and $F$ is singular at the corresponding points.
The local structure of $\varphi$ near ``bad'' fibers is described in \cite{Andreatta1998a}. This proves (iii).

The assertion of (iv) follows from our construction and (ii).

(v) The quadric $\varphi(E)$ is singular, since it contains the surface $F$
of degree $5$. 
Since $-K_E=3(\rho^*H-E)$, the restriction 
$\varphi|_E: E\to \varphi(E)$ is a crepant morphism.
\end{proof}

\begin{remark}
Any smooth intersection of two quadrics $W=W_{2\cdot 2}\subset \PP^6$
contains exactly $64$ planes, and the classes of these planes span the 
cohomology group $H^4(W, \ZZ)$ \cite{Reid1972}. 
Hence the case of a singular surface $F=\varphi(D)$
does occur.
\end{remark}

\begin{remarks}\label{remark-singularities-quadric} 1. The singular locus of the quadric $\varphi(E)$ is a point
if and only if $\NNN_{l/W}$ is of type (a) in \eqref{equation-normal-bundle},
otherwise 
this is a line.

2. Note that the divisor $E$ is $\varphi$-ample. 
Hence $E$ meets any nontrivial fiber of $\varphi$. Furthermore,
it meets any two-dimensional fiber along a subvariety of positive dimension.
Thus, if $l$ is contained in a plane $\Pi\subset W$, then $\varphi(E)$
must be singular at $\phi(\Pi)$.
If $l$ is contained in two planes $\Pi_1,\, \Pi_2\subset W$, then 
the quadric $\varphi(E)$ has two distinct singular points $\phi(\Pi_1)$
and $\phi(\Pi_2)$. Since the singular locus of a quadric is
a linear subspace, in the latter case $\Sing(\varphi(E))$ contains a line.
\end{remarks}

The next lemma is a simple observation. 

\begin{lemma}\label{lem:cylinder-P4}
Let $Q\subset \PP^4$ be a quadric and $L\subset \PP^4$ be a hyperplane.
Suppose that $\Sing (Q) \cap L$ contains a point $P$. 
Then the projection $\PP^4\dashrightarrow\PP^3$ with center $P$ defines a cylinder structure in 
$\PP^4 \setminus (Q\cup L)$.
\end{lemma}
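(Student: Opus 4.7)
The plan is to exhibit the cylinder structure explicitly by passing to affine coordinates adapted to the singular point $P$. First I would choose homogeneous coordinates $[x_0:\ldots:x_4]$ on $\PP^4$ so that $P=[1:0:0:0:0]$. Writing $Q=\{F=0\}$ for a quadratic form $F$, the condition $P\in\Sing(Q)$ forces all partial derivatives of $F$ to vanish at $P$, which eliminates the coefficient of $x_0^2$ and of each $x_0x_i$. Hence $F=F(x_1,\ldots,x_4)$ depends only on $x_1,\ldots,x_4$. Similarly, $P\in L$ forces the linear form defining $L$ to be a form in $x_1,\ldots,x_4$, and after a linear change of coordinates among those variables I may assume $L=\{x_1=0\}$.

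Next I pass to the affine chart $\A^4=\PP^4\setminus L=\{x_1\neq 0\}$ with coordinates $z_0=x_0/x_1$ and $z_i=x_i/x_1$ for $i=2,3,4$. In this chart $Q$ is cut out by $F(1,z_2,z_3,z_4)=0$, and the crucial point is that this equation does not involve $z_0$. Thus $Q\cap\A^4$ is a cylinder $\A^1_{z_0}\times Q''$, where $Q''\subset\A^3_{z_2,z_3,z_4}$ is the affine quadric $\{F(1,z_2,z_3,z_4)=0\}$. Taking complements I obtain
\[
\PP^4\setminus(Q\cup L)\;=\;\A^4\setminus Q\;\cong\;\A^1_{z_0}\times\bigl(\A^3\setminus Q''\bigr).
\]

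To conclude, I identify $\A^3_{z_2,z_3,z_4}$ with $\PP^3\setminus L'$, where $L'=\{x_1=0\}\subset\PP^3$, and observe that $Q''$ is the trace on this chart of the quadric surface $Q'=\{F=0\}\subset\PP^3$. The second factor is therefore $\PP^3\setminus(Q'\cup L')$, and the projection $(z_0,z_2,z_3,z_4)\mapsto(z_2,z_3,z_4)$ on the chart coincides with the restriction of the map $[x_0:\ldots:x_4]\mapsto[x_1:\ldots:x_4]$, which is the projection from $P$. Hence the cylinder structure on $\PP^4\setminus(Q\cup L)$ is the one induced by this projection.

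There is essentially no obstacle in the argument. The only substantive observation is that the singularity of $Q$ at $P$, together with the incidence $P\in L$, eliminates the coordinate $x_0$ from both defining equations, so that after removing $L$ the variable $z_0$ decouples and trivializes the natural $\A^1$-bundle $\PP^4\setminus\{P\}\to\PP^3$ over the affine chart $\PP^3\setminus L'$.
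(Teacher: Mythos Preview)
Your argument is correct and is exactly the elementary verification the paper has in mind; in fact the paper states this lemma without proof, calling it ``a simple observation,'' so your explicit coordinate computation simply fills in what the authors left to the reader.
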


The following is the main result of this section.

\begin{theorem}\label{cor:cylinder in W22} 
In the notation of Proposition \textup{\ref{sarkisov-link-2.2}},
the Zariski open set $W\setminus \rho(D)$ contains a cylinder. 
\end{theorem}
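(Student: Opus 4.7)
The plan is to deduce the theorem directly from parts (iv) and (v) of Proposition \ref{sarkisov-link-2.2} together with Lemma \ref{lem:cylinder-P4}. First I would use part (iv) to replace the problem of finding a cylinder in $W \setminus \rho(D)$ by the equivalent problem of finding one in $\PP^4 \setminus Q$, where $Q := \varphi(E) \subset \PP^4$ is a quadric hypersurface. By part (v), this quadric $Q$ is singular, so I may pick any point $P \in \Sing(Q)$ and any hyperplane $L \subset \PP^4$ passing through $P$.

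With this choice, the hypothesis of Lemma \ref{lem:cylinder-P4} is satisfied: $\Sing(Q) \cap L$ contains $P$. Hence the linear projection $\PP^4 \dashrightarrow \PP^3$ with center $P$ endows $\PP^4 \setminus (Q \cup L)$ with a cylinder structure of the form $Z \times \A^1$ for some quasiprojective $Z$. Since $\PP^4 \setminus (Q \cup L)$ is Zariski open in $\PP^4 \setminus Q$, and the latter is identified via part (iv) with $W \setminus \rho(D)$, this same cylinder sits as a Zariski open subset of $W \setminus \rho(D)$, which is exactly what the theorem asserts.

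Essentially the whole content of the argument is encoded in the two prior statements, so I do not foresee any substantial obstacle. The only geometric fact that is not completely formal is the singularity of $\varphi(E)$, but this has already been established in Proposition \ref{sarkisov-link-2.2}(v) (using that $\varphi(E)$ contains the quintic surface $F$ and that $\varphi|_E$ is crepant). Thus the write-up reduces to a short chain of citations, with no need to revisit the Sarkisov-link construction or to impose any genericity hypothesis on the initial line $l \subset W$.
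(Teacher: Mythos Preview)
Your proposal is correct and follows essentially the same route as the paper: reduce via (iv) to $\PP^4\setminus\varphi(E)$, invoke (v) to know the quadric $\varphi(E)$ is singular, and project from a singular point. The only cosmetic difference is that the paper asserts directly that $\PP^4\setminus\varphi(E)$ is a cylinder, whereas you pass through Lemma~\ref{lem:cylinder-P4} by adjoining an auxiliary hyperplane $L$ through $P$; this makes the cylinder explicit but does not change the argument.
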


\begin{proof} 
Indeed, $\PP^4\setminus \varphi(E)$ is a cylinder because 
$\varphi(E)$ is a singular quadric. Now the result follows due to (iv) 
in Proposition \ref{sarkisov-link-2.2}.
\end{proof}

The next lemma and its corollary will be used in section \ref{section-g=12}.

\begin{lemma}\label{lemma-cylinder-W2.2}
 In the notation as before, let $H_0\subset W$ be a hyperplane 
section containing $l$.
Then the image $L_0=\phi_*(H_0)$ is a hyperplane in $\PP^4$, and
\begin{equation}\label{eq:isomm}
W\setminus (\rho(D)\cup H_0)\simeq \PP^4 \setminus 
(\varphi(E)\cup L_0).
\end{equation}
If $\Sing(\varphi(E))\cap L_0\neq \emptyset$, then $W\setminus 
(\rho(D)\cup H_0)$
contains a cylinder. 
Assume that there is a plane $\Pi\subset W$ such that $l\subset 
\Pi\subset H_0$. 
Then the condition $\Sing(\varphi(E))\cap L_0\neq \emptyset$
holds. 
\end{lemma}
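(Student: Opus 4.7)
The plan is to transfer the situation from $W$ to $\PP^{4}$ via the isomorphism of Proposition \ref{sarkisov-link-2.2}(iv), and then invoke Lemma \ref{lem:cylinder-P4}. First I would identify the class of the proper transform $\widetilde{H_0}$ of $H_0$ under $\rho$. Because $H_0$ is a hyperplane section of the smooth fourfold $W$ containing the line $l$, and $W$ is smooth along $l$, the multiplicity of $H_0$ along $l$ equals $1$; hence $\rho^{*}H_{0} = \widetilde{H_0} + E$, i.e., $\widetilde{H_0}\sim \rho^{*}H - E$. By part (ii) of Proposition \ref{sarkisov-link-2.2} this coincides with $\varphi^{*}L$, so $\widetilde{H_0}$ is the pullback under $\varphi$ of a hyperplane in $\PP^{4}$; this hyperplane is precisely $L_{0}=\phi_{*}(H_{0})$, which establishes the first assertion.

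Next I would deduce \eqref{eq:isomm}. By part (iv) of Proposition \ref{sarkisov-link-2.2}, $\phi$ restricts to an isomorphism $W\setminus \rho(D)\to\PP^{4}\setminus\varphi(E)$. Since the image of $H_{0}\setminus\rho(D)$ under this isomorphism is exactly $L_{0}\setminus\varphi(E)$ by the previous step, the isomorphism restricts further to \eqref{eq:isomm}. The cylinder claim then follows immediately: if $P\in\Sing(\varphi(E))\cap L_{0}$, Lemma \ref{lem:cylinder-P4} provides a cylinder structure on $\PP^{4}\setminus(\varphi(E)\cup L_{0})$, which transports through \eqref{eq:isomm} to a cylinder in $W\setminus(\rho(D)\cup H_{0})$.

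For the final assertion, I would appeal to Remarks \ref{remark-singularities-quadric}: if $\Pi\subset W$ is a plane containing $l$, then the projection $\phi(\Pi)$ is a single point $P\in\PP^{4}$, and by those remarks $\varphi(E)$ is singular at $P$. The hypothesis $\Pi\subset H_{0}$ then forces $P=\phi(\Pi)\in\phi(H_{0})=L_{0}$, whence $P\in\Sing(\varphi(E))\cap L_{0}\neq\emptyset$, as required.

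The only mild subtlety I foresee is justifying that $H_{0}$ has multiplicity exactly $1$ along $l$ (rather than containing $l$ to higher order, which would alter the class of $\widetilde{H_0}$); this is automatic because $W$ is smooth along $l$ and $H_{0}$ is a Cartier divisor cut out by a single hyperplane in $\PP^{6}$. The remainder of the argument is a matter of unwinding the definitions of the Sarkisov link \eqref{equation-sarkisov-link-2.2} and applying the already established Proposition \ref{sarkisov-link-2.2} together with Lemma \ref{lem:cylinder-P4}.
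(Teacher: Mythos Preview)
Your overall architecture is exactly that of the paper: identify the class of $\widetilde{H_0}$, deduce that $L_0$ is a hyperplane, then use Proposition~\ref{sarkisov-link-2.2}(iv), Lemma~\ref{lem:cylinder-P4}, and Remarks~\ref{remark-singularities-quadric}. The only issue is your justification that the multiplicity $k$ of $H_0$ along $l$ equals $1$.

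Your claim that ``$W$ smooth along $l$ and $H_0$ cut out by a single hyperplane'' forces $k=1$ is not valid in general. Smoothness of the ambient variety does not prevent a hyperplane section from being singular along a line: for instance, a smooth cubic surface $S\subset\PP^3$ can contain a line $l$ such that some plane is tangent to $S$ along all of $l$, giving a plane section of the form $2l+l'$. Nothing in your argument rules out the analogous phenomenon for $W_{2\cdot 2}\subset\PP^6$, namely a hyperplane $H'$ with $\overline{T_pW}\subset H'$ for every $p\in l$; in that case one would have $\rho^*H_0=\widetilde{H_0}+2E$ and $\widetilde{H_0}$ would not be linearly equivalent to $\varphi^*L$.

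The paper closes this gap with a one-line intersection computation: writing $\widetilde{H_0}\sim\rho^*H-kE$ with $k\ge 1$, one finds $(\rho^*H-E)^3\cdot\widetilde{H_0}=3-2k$. Since $\rho^*H-E=\varphi^*L$ is base-point-free and $\widetilde{H_0}$ is effective, this number is nonnegative, forcing $k=1$ (and simultaneously giving $\deg\varphi(\widetilde{H_0})=1$). You should replace your multiplicity argument with this computation; the remainder of your proof then goes through verbatim and matches the paper's.
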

\begin{proof} 
Let $\widetilde H_0\subset \widetilde W$ be the proper transform of $H_0$.
We may write $\rho^*H_0=\widetilde H_0+kE$ for some $k\ge 1$.
Since $(\rho^*H-E)^3\cdot \widetilde H_0=3-2k$, we have $k=1$ and $\deg \varphi(\widetilde H_0)=1$.
Hence $L_0$ is a hyperplane. The existence of an isomorphism in (\ref{eq:isomm}) follows by our construction.
The projection of $\PP^4$ from a point $P\in \Sing(\varphi(E))\cap L_0$
produces a cylinder structure on $\PP^4 \setminus (\varphi(E)\cup L_0)$ (see Lemma 
\ref{lem:cylinder-P4}).
Finally, $\phi(\Pi)$ is a point contained in both $L_0$ and $\Sing(\varphi(E))$
(see Remark \ref{remark-singularities-quadric}).
\end{proof}

The following corollary is immediate.

\begin{corollary}\label{corollary-cylinder-W2.2}
Let $H_0\subset W$ be a hyperplane section.
Assume that 
there exists a plane $\Pi\subset W$ such that 
$\dim \Pi\cap H_0\ge 1$. Then $W\setminus H_0$
contains a cylinder. 
\end{corollary}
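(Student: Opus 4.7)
The plan is to deduce this corollary straight from Lemma~\ref{lemma-cylinder-W2.2} by exhibiting a suitable line along which to perform the Sarkisov link of Proposition~\ref{sarkisov-link-2.2}. The essential observation is that the lemma already produces a cylinder in an open subset of $W\setminus H_0$, so the job reduces to choosing the link's center $l$ correctly.

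First, I would extract from the hypothesis a line $l\subset\Pi\cap H_0$. Since $\Pi\cap H_0$ is a subvariety of the projective plane $\Pi\simeq\PP^2$ of positive dimension, it always contains a line (an irreducible $1$-dimensional component of $\Pi\cap H_0$ if this intersection is a curve, or any line of $\Pi$ if $\Pi\subset H_0$). By construction $l\subset\Pi$ and $l\subset H_0$, which places us in the setting of the final assertion of Lemma~\ref{lemma-cylinder-W2.2}: the plane $\Pi$ through $l$ forces $\phi(\Pi)\in\Sing(\varphi(E))\cap L_0$, and the projection from this point gives, via Lemma~\ref{lem:cylinder-P4}, a cylinder structure on $\PP^4\setminus(\varphi(E)\cup L_0)\simeq W\setminus(\rho(D)\cup H_0)$.

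To finish, I would note that $W\setminus(\rho(D)\cup H_0)\subset W\setminus H_0$ is an open inclusion, so a cylinder $U\simeq Z\times\A^1$ which is Zariski open in the smaller set is automatically Zariski open in the larger one; hence $W\setminus H_0$ itself contains a cylinder, as claimed.

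The whole argument is essentially a citation of the previous lemma, with the only genuine step being the choice of $l$. The one subtlety I expect to be the main point to check carefully is that the hypothesis ``$\dim\Pi\cap H_0\ge 1$'' really suffices to invoke the last sentence of Lemma~\ref{lemma-cylinder-W2.2}, whose stated form needs $\Pi\subset H_0$; once one verifies that the positive-dimensional intersection allows a choice of $l$ for which the singular-point condition $\Sing(\varphi(E))\cap L_0\neq\emptyset$ is secured (either because $\Pi$ is actually contained in $H_0$, or, failing that, because $\Sing(\varphi(E))$ happens to be a line and so automatically meets the hyperplane $L_0\subset\PP^4$), the proof is immediate.
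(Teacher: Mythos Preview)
Your approach --- pick a line $l\subset\Pi\cap H_0$ and invoke Lemma~\ref{lemma-cylinder-W2.2} --- is exactly what the paper intends (it offers no proof beyond ``immediate''), and you have correctly isolated the one delicate point.

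The gap is in your handling of the case $\Pi\not\subset H_0$. Note first that for any plane $\Pi\subset W$ and any hyperplane section $H_0$ one has $\Pi\cap H_0=\Pi\cap\langle H_0\rangle$, which is automatically at least a line; so the stated hypothesis is vacuous once $W$ contains a plane (and it always does). When $\Pi\cap H_0=l$ is exactly a line, the point $\phi(\Pi)$ does lie in $\Sing(\varphi(E))$ by Remarks~\ref{remark-singularities-quadric}, but it does \emph{not} lie on $L_0$: under projection from $l$, a plane through $l$ lands in the hyperplane $L_0$ if and only if it is contained in $\langle H_0\rangle$, which fails here. Your fallback, that $\Sing(\varphi(E))$ ``happens to be a line'', is precisely what part~1 of Remarks~\ref{remark-singularities-quadric} denies when $\NNN_{l/W}$ is of type~(a) in \eqref{equation-normal-bundle}, and the presence of a single plane through $l$ does not by itself force type~(b). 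So neither branch of your dichotomy is established in that case. In practice this is harmless: the paper applies the corollary only in Section~\ref{section-g=12} with a plane $\Pi_0$ actually contained in $H_0$, where the last sentence of Lemma~\ref{lemma-cylinder-W2.2} applies verbatim and your argument is complete.
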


\section{The quintic del Pezzo fourfold}\label{sec:W5}
According to \cite[II]{Fujita1980-84} a del Pezzo fourfold of degree $5$ 
is unique up to isomorphism. It can be realized as 
a smooth section $W=W_5\subset \PP^7$ of the Grassmannian 
$\Gr(2, 5)$ under its Pl\"ucker embedding in $\PP^9$ by a codimension 2 linear subspace. 
Clearly, the class of a hyperplane section $H$ generates the group $\Pic(W)$,
and $-K_W\sim 3H$. The variety $W$ is an intersection of quadrics (see 
\cite[ch. 1, \S 5]{Griffiths-Harris-1978}).

The following theorem is the main result of this section.
\begin{theorem}
\label{theorem-w5}
Let $W=W_5\subset \PP^7 $ be a del Pezzo fourfold of
degree $5$,
and let $A$ be a singular hyperplane section of $W$.
Then the Zariski open set $W\setminus A$ contains a cylinder. 
\end{theorem}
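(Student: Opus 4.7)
The proof follows Theorem \ref{cor:cylinder in W22}: we perform a Sarkisov link from a carefully chosen line $l\subset W$ to a quadric fourfold, and extract the cylinder on the target. To find $l$: let $P\in\Sing(A)$; by Corollary \ref{cor:2.8}, through $P$ there passes a $1$-parameter family of lines on $W$. For any such $l$, one has $l\cdot A=l\cdot H=1$, while the singularity of $A$ at $P\in l$ forces $l\cap A$ to have multiplicity $\ge 2$ at $P$ unless $l\subset A$; hence $l\subset A$. Choose $l$ general in this family, so that $\NNN_{l/W}$ is of generic type in \eqref{equation-normal-bundle}.

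\emph{Sarkisov link.} Let $\rho:\widetilde W\to W$ be the blow-up of $l$, with exceptional divisor $E$. Since $l$ is a scheme-theoretic intersection on $W$ of members of $|H-l|$, the linear system $|\rho^*H-E|$ is base-point-free. By Lemma \ref{lemma-intersection-theory}(i) with $H^4=5$, $-K_W\cdot l=3$, and $g(l)=0$,
\[
(\rho^*H-E)^4=2,\qquad (\rho^*H-E)^3\cdot E=2.
\]
Hence the morphism $\varphi:\widetilde W\to\PP^5$ defined by $|\rho^*H-E|$ is birational onto a quadric fourfold $Q\subset\PP^5$, and $\varphi(E)\subset Q$ is a singular hyperplane section (a quadric threefold in some $\PP^4\subset\PP^5$). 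An argument paralleling Proposition \ref{sarkisov-link-2.2} shows that $\varphi$ contracts an irreducible divisor $D\sim\rho^*H-2E$ onto a surface in $Q$.

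\emph{Descent and cylinder.} The proper transform $\widetilde A$ of $A$ satisfies $\widetilde A\sim\rho^*H-E=\varphi^*L$, where $L$ is the hyperplane class of $Q$. Hence $L_0:=\varphi(\widetilde A)$ is a hyperplane section of $Q$, and
\[
W\setminus\bigl(A\cup\rho(D)\bigr)\;\simeq\;Q\setminus\bigl(L_0\cup\varphi(E)\bigr).
\]
The singular quadric threefold $\varphi(E)$ contains a $2$-plane $\Pi\subset Q$; projection of $\PP^5$ from $\Pi$ realizes $Q\setminus\Pi$ as a family of affine $2$-planes over $\PP^2$. By an argument analogous to Lemma \ref{lemma-cylinder-W2.2}, the singularity of $A$ at $P\in l$ forces $L_0$ and the residual component of $\varphi(E)$ to meet a generic fibre in proper subvarieties; consequently a principal Zariski open subset $U\subset Q\setminus(L_0\cup\varphi(E))$ has the form $U\simeq Z\times\A^1$. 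Pulling back through the isomorphism above yields a cylinder in $W\setminus A$.

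The main technical obstacle is in the last step: one must exhibit the $2$-plane $\Pi\subset\varphi(E)$ in a position compatible with $L_0$ — so that removing $L_0\cup\varphi(E)$ from the $\A^2$-fibration on $Q\setminus\Pi$ preserves at least one trivial $\A^1$-factor — and trace this geometric compatibility back through the Sarkisov link to the singularity of $A$. An auxiliary difficulty is ensuring that the normal bundle of a generic line through $P$ is of the type assumed in the numerical computations.
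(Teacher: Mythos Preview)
Your approach differs substantially from the paper's, and as you yourself acknowledge in the final paragraph, it is incomplete at the decisive step.

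\textbf{The paper's route.} The paper never blows up a line. Instead it first proves the key geometric lemma (Lemma~\ref{lemma-W5-plane}): \emph{every} singular hyperplane section $A$ of $W_5$ contains a plane. This uses the classification of Gorenstein del Pezzo threefolds of degree $5$ with terminal singularities. That plane is either the unique $\sigma_{2,2}$-plane $\Xi$ or a $\sigma_{3,1}$-plane $\Pi$. In the first case one blows up $\Xi$ and lands in $\PP^4$ (Proposition~\ref{Proposition-3.2}); in the second one blows up $\Pi$ and lands in a smooth quadric $Q$ (Proposition~\ref{Proposition 3.8.}). In either case the cylinder is read off directly from Corollaries~\ref{Corollary-3.3.} and~\ref{Corollary-3.3a.}: the image of $A$ is a single hyperplane section of the target, and the complement is an explicit affine bundle.

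\textbf{Your route and its gap.} You blow up a line $l\subset A$ through a singular point, obtaining a link to a quadric fourfold $Q\subset\PP^5$. The numerics are correct: $(\rho^*H-E)^4=2$, and $D\sim\rho^*H-2E$ is contracted to a cubic surface. But several claims are unsupported. You assert that $\varphi(E)$ is singular without proof (the crepant argument of Proposition~\ref{sarkisov-link-2.2}(v) works here too, but you do not give it), and you do not check that $Q$ itself is smooth (two-dimensional fibres of $\varphi$ are not excluded). More seriously, the endgame requires a cylinder in $Q\setminus(L_0\cup\varphi(E))$, with both hyperplane sections removed. For this you need a concrete incidence, e.g.\ $\Sing(\varphi(E))\cap L_0\neq\emptyset$, so that projection from the vertex carries $Q\setminus(L_0\cup\varphi(E))$ to an explicit cylinder in $\PP^4$. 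You never establish this; the phrase ``by an argument analogous to Lemma~\ref{lemma-cylinder-W2.2}'' is not an argument, and tracing the singularity of $A$ at $P$ through the blow-up of $l$ to locate $\Sing(\varphi(E))$ relative to $L_0$ is genuinely delicate. Your closing paragraph concedes exactly this.

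In short: the paper's plane-based links are cleaner because the image of $A$ becomes a single divisor on the target and no further compatibility is needed; your line-based link produces two divisors on $Q$ whose mutual position must be controlled, and that control is missing.
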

The proof is done at the end of the section. 
First we need some preliminaries. 

There are two types of planes in
$\Gr(2, 5)$, namely, the Schubert varieties $\sigma_{3,1}$ and 
$\sigma_{2,2}$ (\cite[ch. 1, \S 5]{Griffiths-Harris-1978}), where
\begin{itemize}
 \item 
$\sigma_{3,1}:=\{l\in \Gr(2, 5)\,|\, p\in l \subset \PP^3\}$ with $\PP^3\subset \PP^4$ a fixed
$3$-dimensional subspace and $p\in\PP^3$ a fixed point;
\item 
$\sigma_{2,2}:=\{l\in \Gr(2, 5)\,|\, l \subset \PP^2\}$ with $\PP^2\subset \PP^4$ a fixed 
plane.
\end{itemize}

\begin{lemma}
\label{Lemma-2.4.}
For the Chern classes of $G=\Gr(2,5)$ we have
\[
\begin{array}{l@{\qquad}l}
c_1(G)= 5\sigma_{1,0}, &
c_2(G) =11\sigma_{2,0}+12\sigma_{1,1},
\\[5pt]
c_3(G) = 15\sigma_{3,0}+30\sigma_{2,1},
&
c_4(G) = 35\sigma_{3,1}+25\sigma_{2,2}.
\end{array}
\]
\end{lemma}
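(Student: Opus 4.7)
The strategy is the classical one for computing Chern classes of a Grassmannian: identify the tangent bundle in terms of the tautological bundles and then expand via the splitting principle.

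First, I would set up the universal exact sequence on $G = \Gr(2,5)$,
\[
0 \to \mathcal{S} \to \mathcal{O}_G^{\oplus 5} \to \mathcal{Q} \to 0,
\]
with $\mathcal{S}$ the tautological rank-$2$ subbundle and $\mathcal{Q}$ the rank-$3$ quotient, and recall the standard isomorphism $\mathcal{T}_G \simeq \mathcal{S}^{\vee} \otimes \mathcal{Q}$. The Chern classes of the tautological bundles are identified with special Schubert classes in the standard way:
\[
c(\mathcal{S}^{\vee}) = 1 + \sigma_{1,0} + \sigma_{1,1}, \qquad c(\mathcal{Q}) = 1 + \sigma_{1,0} + \sigma_{2,0} + \sigma_{3,0}.
\]

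Next, I would apply the splitting principle: letting $a_1, a_2$ be the Chern roots of $\mathcal{S}^{\vee}$ and $b_1, b_2, b_3$ those of $\mathcal{Q}$, the total Chern class of the tangent bundle is
\[
c(\mathcal{T}_G) = \prod_{i=1}^{2}\prod_{j=1}^{3}\bigl(1 + a_i + b_j\bigr).
\]
Expanding this product degree-by-degree, the $k$-th Chern class $c_k(G)$ is a symmetric polynomial in the $a_i$ and $b_j$, hence a polynomial in the elementary symmetric functions, i.e., in $c_1(\mathcal{S}^{\vee}), c_2(\mathcal{S}^{\vee})$ and $c_1(\mathcal{Q}), c_2(\mathcal{Q}), c_3(\mathcal{Q})$. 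For instance, collecting degree-one terms yields at once
\[
c_1(G) = 3c_1(\mathcal{S}^{\vee}) + 2c_1(\mathcal{Q}) = 3\sigma_{1,0} + 2\sigma_{1,0} = 5\sigma_{1,0},
\]
as claimed. The analogous (but longer) expansions in degrees $2, 3, 4$ produce explicit polynomials in the above Chern classes.

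Finally, I would translate those polynomial expressions into Schubert classes by repeatedly applying Pieri's formula. The key identities are
\[
\sigma_{1,0}^{\,2} = \sigma_{2,0} + \sigma_{1,1}, \qquad \sigma_{1,0}\cdot\sigma_{2,0} = \sigma_{3,0} + \sigma_{2,1}, \qquad \sigma_{1,0}\cdot\sigma_{1,1} = \sigma_{2,1},
\]
\[
\sigma_{1,0}\cdot\sigma_{3,0} = \sigma_{3,1}, \qquad \sigma_{1,0}\cdot\sigma_{2,1} = \sigma_{3,1} + \sigma_{2,2}, \qquad \sigma_{2,0}\cdot\sigma_{1,1} = \sigma_{2,2}, \qquad \sigma_{2,0}^{\,2} = \sigma_{3,1} + \sigma_{2,2},
\]
plus $\sigma_{1,1}^{\,2} = \sigma_{2,2}$. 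Substituting into the polynomial expressions obtained from the previous step reads off the four stated identities. The only real obstacle is bookkeeping in the degree-$3$ and degree-$4$ expansions: one must carefully collect all monomials $a_1^{p_1}a_2^{p_2}b_1^{q_1}b_2^{q_2}b_3^{q_3}$ that occur in the symmetric expansion of $\prod_{i,j}(1+a_i+b_j)$ and rewrite them as polynomials in the Chern classes before invoking Pieri, but this is mechanical.
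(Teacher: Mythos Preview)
Your approach is exactly the paper's: identify $\TTT_G\simeq \mathcal{S}^{\vee}\otimes\mathcal{Q}$, read off the Chern classes of the tautological bundles as special Schubert classes, and expand (the paper simply cites Fulton, Example~14.5.2 for the ``standard computations'' that you spell out via the splitting principle and Pieri).

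One slip to fix before you carry out the bookkeeping: in $\Gr(2,5)$ the identity $\sigma_{2,0}\cdot\sigma_{1,1}=\sigma_{2,2}$ is incorrect; the correct product is $\sigma_{2,0}\cdot\sigma_{1,1}=\sigma_{3,1}$. (Check via the dual Pieri rule, or by pairing with $\sigma_{2,0}$ and $\sigma_{1,1}$ using $\sigma_{2,0}^2=\sigma_{3,1}+\sigma_{2,2}$ and $\sigma_{1,1}^2=\sigma_{2,2}$.) With this corrected, your degree-$4$ expansion will land on $35\sigma_{3,1}+25\sigma_{2,2}$ as stated.
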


\begin{proof}
Let $\mathcal I\to G$ be the universal subbundle and $\mathcal Q\to G$ 
the universal factor-bundle 
(see \cite[ch. 3, \S 11]{Griffiths-Harris-1978}).
Then $c_1(\mathcal I)=\sigma_{1,0}$, $c_2(\mathcal I)=\sigma_{1,1}$, and
$c_r(\mathcal Q)\sim \sigma_{r,0}$ ({\em ibid}).
Since $T_G \simeq \mathcal I^*\otimes \mathcal Q$, 
standard computations give
the result
(see e.g. \cite[Example 14.5.2]{Fulton1998}).
\end{proof}

\begin{corollary}\label{Chern-classes-W5} For $W=W_5$ we have
\[
\text{
$c_1(W)=3\sigma_{1,0}|_W$,\quad
$c_2(W)= 4\sigma_{2,0}|_W+5\sigma_{1,1}|_W$, and
$\Eu(W)=6$.
}
\]
\end{corollary}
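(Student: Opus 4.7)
The plan is to use the standard Chern class calculation for a complete intersection. Since $W$ is cut out in $G=\Gr(2,5)$ by two Pl\"ucker hyperplanes, we have $\NNN_{W/G}\simeq \OOO_W(1)^{\oplus 2}$; setting $h:=\sigma_{1,0}|_W$, its total Chern class equals $(1+h)^2$. The normal bundle exact sequence
$$0\to \TTT_W\to \TTT_G|_W\to \NNN_{W/G}\to 0$$
then gives the multiplicative identity
$$c(\TTT_W)\cdot (1+h)^2 = c(\TTT_G)|_W$$
in the Chow ring of $W$. All three assertions will be extracted from the graded pieces of this identity, using the values of $c_i(G)$ supplied by Lemma \xref{Lemma-2.4.}.

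The first two Chern classes fall out at once. Comparing degree $1$ parts, $c_1(W)+2h=5h$, so $c_1(W)=3h=3\sigma_{1,0}|_W$. Comparing degree $2$ parts,
$$c_2(W) = c_2(G)|_W - 2h\cdot c_1(W) - h^2 = (11\sigma_{2,0}+12\sigma_{1,1})|_W - 7h^2,$$
and the Pieri relation $\sigma_{1,0}^2=\sigma_{2,0}+\sigma_{1,1}$ collapses this to $(4\sigma_{2,0}+5\sigma_{1,1})|_W$, as claimed.

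For $\Eu(W)$ I would expand $(1+h)^{-2}=\sum_{k\ge 0}(-1)^k(k+1)h^k$ to obtain
$$c_4(W) = c_4(G)|_W - 2h\cdot c_3(G)|_W + 3h^2\cdot c_2(G)|_W - 4h^3\cdot c_1(G)|_W + 5h^4,$$
and then apply the projection formula $\int_W\alpha = \int_G\alpha\cdot\sigma_{1,0}^2$ to reduce each summand to a top intersection on $\Gr(2,5)$. Each integrand is a Schubert product of total codimension $6$, which collapses via Pieri's rule and the duality pairing $\int_G\sigma_\lambda\cdot\sigma_\mu=1$ precisely when $\lambda$ and $\mu$ are complementary partitions in the $2\times 3$ rectangle (and zero otherwise). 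Adding up the five contributions yields $60-150+171-100+25=6$.

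The only real obstacle is bookkeeping. One must compute the Pieri expansions of $\sigma_{1,0}^n$ for $n=3,4,5,6$ and remember the complementary pairs $\{\sigma_{0,0},\sigma_{3,3}\}$, $\{\sigma_{1,0},\sigma_{3,2}\}$, $\{\sigma_{2,0},\sigma_{3,1}\}$, $\{\sigma_{1,1},\sigma_{2,2}\}$, $\{\sigma_{3,0},\sigma_{3,0}\}$ and $\{\sigma_{2,1},\sigma_{2,1}\}$. With this list in hand the Schubert arithmetic is routine.
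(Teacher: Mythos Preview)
Your proof is correct and follows exactly the paper's approach: the paper's argument is the one-line observation that the adjunction formula $c_t(G)=c_t(W)\cdot(1+\sigma_{1,0})^2$ holds, and that inverting the last factor yields the stated equalities. You have simply spelled out the inversion and the Schubert arithmetic for $\Eu(W)$ in full, which the paper leaves implicit.
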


\begin{proof} The Adjunction Formula for the total Chern classes yields:
$$c_t(G)=c_t(W)\cdot c_t(\NNN_{W/G})=c_t(W)\cdot (1+\sigma_{1,0})^2\,.$$
Inversing the last factor gives the desired equalities. \end{proof}

 The following proposition proven in \cite{Todd1930} (see also \cite[3.3]{Debarre2012}) 
deals with the planes in the fourfold $W_5$. 

\begin{proposition}
\label{Proposition-2.2.}
Let $W=W_5\subset \PP^7 $ be a Fano fourfold of index $3$ and degree $5$.
Then the following hold. 
\begin{enumerate}
\item 
$W$ contains exactly one $\sigma_{2,2}$-plane $\Xi$ and a one-parameter 
family of $\sigma_{3,1}$-planes. 

\item 
Any $\sigma_{3,1}$-plane $\Pi$ meets $\Xi$ along a tangent line to a fixed conic
$C\subset \Xi$. 

\item 
Any two $\sigma_{3,1}$-planes $\Pi_1$ and $\Pi_2$ meet at a point $p\subset \Xi \setminus C$. 

\item 
Let $R$ be the union of all $\sigma_{3,1}$-planes on $W$. 
Then $R$ is a hyperplane section of $W$ and $\Sing(R) = \Xi$. 
\end{enumerate}
\end{proposition}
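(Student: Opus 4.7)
The plan is to establish (i)--(iv) in turn, using Schubert calculus on $G=\Gr(2,5)$ together with the explicit parameterizations of the two families of planes: $\sigma_{2,2}$-planes by $\Gr(3,5)$ (dimension $6$) and $\sigma_{3,1}$-planes by the partial flag variety $F(1,4;5)=\{(\ell,\Lambda):\ell\subset\Lambda,\,\dim\ell=1,\,\dim\Lambda=4\}$ (dimension $7$). For (i), containing a $\PP^2$-plane in $W$ imposes $3$ linear conditions per defining hyperplane, so $2\cdot 3=6$ in total, giving expected dimensions $6-6=0$ and $7-6=1$ for the Fano schemes of $\sigma_{2,2}$- and $\sigma_{3,1}$-planes respectively. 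The exact count of $\sigma_{2,2}$-planes is the top Chern-class intersection of the rank-$3$ bundle $\pi_*\OOO_{\mathcal{P}}(1)$ on $\Gr(3,5)$ (with $\mathcal{P}$ the universal plane), which equals $1$ by a standard Schubert computation; by Fujita's uniqueness of $W_5$ this is exact for every smooth $W_5$. A parallel Chern calculation shows the $\sigma_{3,1}$-family is nonempty, hence of exact dimension $1$.

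For (ii), fix the unique $\sigma_{2,2}$-plane $\Xi=\Gr(2,M)$ with $M\subset\CC^5$ of dimension $3$, and let $\alpha_1,\alpha_2\in\Lambda^2(\CC^5)^*$ be the two linear forms defining $W\subset G$; since $\Xi\subset W$, both $\alpha_i$ vanish on $\Lambda^2 M$. For a $\sigma_{3,1}$-plane $\Pi=\{U:\ell\subset U\subset\Lambda\}$, an elementary linear-algebra computation gives $\Pi\cap\Xi=\emptyset$ if $\ell\not\subset M$, the single point $\{\Lambda\cap M\}$ if $\ell\subset M$ but $M\not\subset\Lambda$, and the line $\PP(M/\ell)\subset\Xi$ if $\ell\subset M\subset\Lambda$. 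On the stratum $\mathcal{S}\subset F(1,4;5)$ defined by $\ell\subset M\subset\Lambda$ (dimension $3$), each $\alpha_i$ already vanishes on $\PP(M/\ell)$, so the condition $\alpha_i|_\Pi=0$ reduces to the single scalar equation $\alpha_i(\ell\wedge\mu)=0$, where $\mu$ generates $\Lambda/M$, giving an expected $1$-parameter family in $\mathcal{S}$. This sub-family already accounts for the entire family of (i) --- one verifies, using the specific geometry of $W_5$, that no $\sigma_{3,1}$-plane with $\ell\not\subset M$ lies in $W$ --- so every such $\Pi$ meets $\Xi$ along $\PP(M/\ell)$. Eliminating $\mu\in\PP(\CC^5/M)\cong\PP^1$ from the two bilinear equations yields a $2\times 2$ determinant quadratic in $\ell\in\PP(M)\cong\PP^2$, cutting out a conic; under the projective duality $\Xi=\Gr(2,M)\cong\PP(M)^\vee$, this gives the conic $C\subset\Xi$, and $\PP(M/\ell)\subset\Xi$ is tangent to $C$ at the point corresponding to $\ell$.

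For (iii), two such planes $\Pi_i=(\ell_i,\Lambda_i)$ with $\ell_1\neq\ell_2$ meet in the single point $U=\ell_1+\ell_2\subset M$ of $\Xi$, which lies off $C$ because the line $\PP(U)\subset\PP(M)$ is a secant (through the distinct points $\ell_1,\ell_2$ of the conic in $\PP(M)$), not a tangent. For (iv), $R=\bigcup_\Pi\Pi$ has dimension at most $2+1=3$ and is irreducible (as the parameterizing family is), so $\Pic(W)=\ZZ H$ forces $R\sim kH$ with $k\ge 1$; computing $R\cdot H^3$ on the incidence variety of pairs (plane, point), which is birational to $R$ by (iii) outside $\Xi$, yields $k=1$, making $R$ a hyperplane section. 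Since tangent lines to $C$ cover $\Xi$, every point of $\Xi$ lies on some $\sigma_{3,1}$-plane in addition to $\Xi$ itself, giving two distinct tangent $2$-planes to $R$ there, so $\Xi\subset\Sing(R)$; conversely (iii) shows $R$ is smooth outside $\Xi$. The main obstacle is (ii): identifying the conic $C$ and verifying the tangency require careful projective-duality work, and ruling out $\sigma_{3,1}$-planes with $\ell\not\subset M$ uses the specific geometry of $W_5$ essentially.
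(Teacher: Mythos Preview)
The paper does not supply its own proof of this proposition: it is stated with the attribution ``proven in \cite{Todd1930} (see also \cite[3.3]{Debarre2012})'' and no argument follows. There is thus nothing in the paper to compare your argument against; I assess your proposal on its own terms.

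Your framework is the right one, and large parts are correct: the parameterizations of the two plane families, the three-case analysis of $\Pi\cap\Xi$ according to whether $\ell\subset M$ and $M\subset\Lambda$, the reduction on the stratum $\mathcal S=\{\ell\subset M\subset\Lambda\}$ to the two bilinear equations $\alpha_i(v\wedge\mu)=0$, and the identification of the conic via the $2\times 2$ determinant and the duality $\Xi=\Gr(2,M)\cong\PP(M)^\vee$ are all sound. Your treatment of (iii) in the generic case $\ell_1\neq\ell_2$, $\Lambda_1\neq\Lambda_2$ is also correct.

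The genuine gap is the one you flag yourself: in (ii) you must prove that \emph{every} $\sigma_{3,1}$-plane $\Pi\subset W$ satisfies $\ell\subset M\subset\Lambda$, not merely that $\mathcal S$ contributes a one-parameter subfamily. Writing ``uses the specific geometry of $W_5$ essentially'' is not an argument. One clean way to close it is via intersection numbers in $H^4(W,\QQ)$. The Lefschetz hyperplane theorem and the Euler-number computation $\Eu(W)=6$ (both independent of this proposition) show that $\iota^*\colon H^4(\Gr(2,5),\QQ)\to H^4(W,\QQ)$ is an isomorphism; dually, $\iota_*$ is injective, so any two $\sigma_{3,1}$-planes in $W$ are rationally homologous. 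Computing on a single $\Pi$ with $\ell\subset M\subset\Lambda$ (using the excess-intersection formula along the line $\Pi\cap\Xi$, or equivalently the normal-bundle data $c_1(\NNN_{\Xi/W})=0$) gives $\Pi\cdot\Xi=-1$. Since a disjoint pair gives $0$ and a proper zero-dimensional intersection gives a positive integer, $\Pi\cdot\Xi=-1$ forces $\dim(\Pi\cap\Xi)=1$ for \emph{every} $\sigma_{3,1}$-plane, i.e.\ $\ell\subset M\subset\Lambda$, as needed.

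Two smaller points. In (iii) you should also exclude the degenerate configurations $\ell_1=\ell_2,\ \Lambda_1\neq\Lambda_2$ and $\ell_1\neq\ell_2,\ \Lambda_1=\Lambda_2$; both are ruled out by checking that for generic (hence, by uniqueness of $W_5$, for the actual) $\alpha_1,\alpha_2$ the map $\ell\mapsto\Lambda$ along the conic is a bijection. In (iv) you assert that the incidence computation gives $R\cdot H^3=5$ and hence $R\sim H$, but you do not carry it out; this step should be made explicit.
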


\begin{remark}
In \cite[II, \S 10]{Fujita1980-84} $\sigma_{3,1}$-planes ($\sigma_{2,2}$-planes, respectively) are called planes 
of {\em vertex type} (of {\em non-vertex type}, respectively).
\end{remark}

The following lemma completes the picture. 

\begin{lemma}\label{lemma-W5-normal-bundle}
Let $\Lambda\subset W$ be a plane.
Then $c_1(\NNN_{\Lambda/W})=0$ and
$c_2(\NNN_{\Lambda/W})=2$ \textup($c_2(\NNN_{\Lambda/W})=1$, respectively\textup)
if $\Lambda$ is of type $\sigma_{2,2}$ \textup(of type $\sigma_{3,1}$, respectively\textup).
\end{lemma}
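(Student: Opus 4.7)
The plan is to extract both Chern classes of the rank-two bundle $\NNN_{\Lambda/W}$ from the adjunction exact sequence
\[
0\longrightarrow \TTT_\Lambda\longrightarrow \TTT_W|_\Lambda\longrightarrow \NNN_{\Lambda/W}\longrightarrow 0
\]
via the Whitney product formula, using Corollary \ref{Chern-classes-W5} and the Schubert calculus on $G=\Gr(2,5)$ to evaluate the right-hand side.

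First, taking $c_1$ of the sequence gives $c_1(\NNN_{\Lambda/W})=c_1(\TTT_W)|_\Lambda-c_1(\TTT_\Lambda)$. Since $\Lambda\simeq\PP^2$ has $-K_\Lambda=3h$ (where $h$ is the hyperplane class on $\Lambda$), and by Corollary \ref{Chern-classes-W5} one has $c_1(W)=3\sigma_{1,0}|_W$, whose restriction to $\Lambda$ is also $3h$ (because $\Lambda$ is a linear $\PP^2$ in $\PP^7$), we obtain $c_1(\NNN_{\Lambda/W})=0$ in both cases.

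For $c_2$, the Whitney formula together with $c_1(\NNN_{\Lambda/W})=0$ and the Euler sequence $c(\TTT_\Lambda)=1+3h+3h^2$ yields, in $A^*(\Lambda)=\ZZ[h]/(h^3)$,
\[
c(\TTT_W|_\Lambda)=(1+3h+3h^2)\bigl(1+c_2(\NNN_{\Lambda/W})\bigr)=1+3h+\bigl(3+n_2\bigr)h^2,
\]
where $n_2\in\ZZ$ denotes the integer $c_2(\NNN_{\Lambda/W})$. Hence it suffices to compute $\int_\Lambda c_2(W)|_\Lambda$ and to subtract $3$. By Corollary \ref{Chern-classes-W5} we have $c_2(W)=4\sigma_{2,0}|_W+5\sigma_{1,1}|_W$, so the job reduces to the intersection numbers $\sigma_{2,0}\cdot[\Lambda]$ and $\sigma_{1,1}\cdot[\Lambda]$ in $A^*(G)$, where $[\Lambda]$ is either $\sigma_{3,1}$ or $\sigma_{2,2}$.

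These pairings are immediate from the standard self-duality of Schubert classes on $\Gr(2,5)$ with respect to the complementary partition in the $2\times 3$ box: $(2,0)$ is dual to $(3,1)$ and $(1,1)$ is dual to $(2,2)$. Thus
\[
\int_\Lambda\sigma_{2,0}|_\Lambda=\begin{cases}1,&\Lambda\text{ of type }\sigma_{3,1},\\ 0,&\Lambda\text{ of type }\sigma_{2,2},\end{cases}\qquad
\int_\Lambda\sigma_{1,1}|_\Lambda=\begin{cases}0,&\Lambda\text{ of type }\sigma_{3,1},\\ 1,&\Lambda\text{ of type }\sigma_{2,2}.\end{cases}
\]
Consequently $\int_\Lambda c_2(W)|_\Lambda=4$ in the first case and $=5$ in the second, giving $n_2=1$ and $n_2=2$, respectively. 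No step is genuinely difficult; the only point requiring care is the Schubert-duality computation, which could alternatively be verified by Pieri's rule applied to $\sigma_{2}\cdot\sigma_{3,1}=\sigma_{3,3}$ and $\sigma_{1,1}\cdot\sigma_{2,2}=\sigma_{3,3}$, all other products in the relevant bidegree being zero.
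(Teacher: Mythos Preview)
Your proof is correct and follows essentially the same route as the paper's: both use the Whitney formula for the normal-bundle sequence together with Corollary~\ref{Chern-classes-W5}, reducing the computation of $c_2(\NNN_{\Lambda/W})$ to the Schubert intersection numbers $\sigma_{2,0}\cdot[\Lambda]$ and $\sigma_{1,1}\cdot[\Lambda]$, and then evaluating these via the duality pairing on $\Gr(2,5)$. The only difference is cosmetic---you spell out the Schubert-duality step and the Pieri verification explicitly, whereas the paper leaves these implicit.
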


\begin{proof}
Let $l$ be the class of a line on $\Lambda$.
By Corollary \ref{Chern-classes-W5} we have
$c_1(W)|_\Lambda=3\sigma_{1,0}|_\Lambda=3l$ and
$c_2(W)|_\Lambda= 4\sigma_{2,0}|_\Lambda+5\sigma_{1,1}|_\Lambda$.
Since $c_t(\NNN_{\Lambda/W})\cdot c_t(\Lambda) =c_t(W)|_\Lambda$, we have
\begin{eqnarray*}
c_1(\NNN_{\Lambda/W})&=&c_1(W)|_\Lambda-c_1(\Lambda)=0,
\\
c_2(\NNN_{\Lambda/W})&=& c_2(W)\cdot\Lambda-c_1(\NNN_{\Lambda/W})\cdot c_1(\Lambda)-c_2(\Lambda)=
\\
&=&4\sigma_{2,0}\cdot\Lambda+5\sigma_{1,1}\cdot\Lambda-3.
\end{eqnarray*}
These lead to the desired equalities.
\end{proof}

\begin{corollary}
The groups $H^q(W,\ZZ)$ vanish if $q$ is odd,
$H^2(W,\ZZ)\simeq H^6(W,\ZZ)\simeq \ZZ$, and $\rk H^4(W,\ZZ)=2$.
Moreover, $H^4(W,\ZZ)/\operatorname{Tors}$ is generated 
by the classes of a $\sigma_{3,1}$-plane $\Pi$
and the $\sigma_{2,2}$-plane $\Xi$.
\end{corollary}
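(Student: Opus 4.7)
The plan is to combine the Lefschetz hyperplane theorem, Poincar\'e duality, and a unimodular lattice comparison. Since $W\subset \Gr(2,5)$ is a smooth codimension-$2$ linear section, I would view it as a complete intersection of two hyperplane sections and apply Lefschetz iteratively. This yields, integrally, that the restriction $i^*\colon H^q(\Gr(2,5),\ZZ)\to H^q(W,\ZZ)$ is an isomorphism for $q<4$ and injective for $q=4$. Because the integral cohomology of $\Gr(2,5)$ is torsion-free (Schubert cell decomposition), one immediately gets $H^0(W,\ZZ)=\ZZ$, $H^1(W,\ZZ)=0$, $H^2(W,\ZZ)=\ZZ\cdot\sigma_{1,0}|_W$, $H^3(W,\ZZ)=0$, together with a full-rank sublattice $M:=\ZZ\,\sigma_{2,0}|_W+\ZZ\,\sigma_{1,1}|_W\subseteq H^4(W,\ZZ)$.

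I would next pin down $H^4$ and all higher groups. The Euler number $\Eu(W)=6$ from Corollary~\ref{Chern-classes-W5}, combined with the Poincar\'e-duality identity $b_q=b_{8-q}$, forces $b_4(W)=2$, matching the rank of $M$. The homological Lefschetz theorem gives $H_3(W,\ZZ)=H_3(\Gr(2,5),\ZZ)=0$, and the universal coefficient theorem then shows $H^4(W,\ZZ)\simeq\ZZ^2$ is torsion-free. Poincar\'e duality combined with the integral Lefschetz isomorphisms on $H_*$ in low degrees yields $H^5(W,\ZZ)=0$, $H^6(W,\ZZ)\simeq\ZZ$, $H^7(W,\ZZ)=0$, $H^8(W,\ZZ)\simeq\ZZ$, all torsion-free as required.

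For the identification of generators I would exploit the unimodularity of the cup-product pairing on $H^4(W,\ZZ)$ supplied by Poincar\'e duality in the absence of torsion. Using the projection formula together with $[W]_{\Gr}=\sigma_{1,0}^2=\sigma_{2,0}+\sigma_{1,1}$ and the standard Grassmannian multiplication table, the Gram matrix of $(\sigma_{2,0}|_W,\sigma_{1,1}|_W)$ has entries $(\sigma_{2,0}|_W)^2=2$, $(\sigma_{2,0}\cdot\sigma_{1,1})|_W=1$, $(\sigma_{1,1}|_W)^2=1$, hence determinant $1$; unimodularity of the ambient lattice then forces $M=H^4(W,\ZZ)$. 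For the plane classes themselves, projection via $\Pi,\Xi\hookrightarrow \Gr(2,5)$ converts mixed intersections into Schubert products in $\Gr(2,5)$: $[\Pi]\cdot\sigma_{a,b}|_W=\sigma_{3,1}\cdot\sigma_{a,b}$ and $[\Xi]\cdot\sigma_{a,b}|_W=\sigma_{2,2}\cdot\sigma_{a,b}$. Schubert duality makes the resulting mixed Gram matrix the $2\times 2$ identity, identifying $\{[\Pi],[\Xi]\}$ as dual to $\{\sigma_{2,0}|_W,\sigma_{1,1}|_W\}$ under the intersection form. Inverting the Schubert Gram matrix then produces the integer relations $[\Pi]=\sigma_{2,0}|_W-\sigma_{1,1}|_W$ and $[\Xi]=-\sigma_{2,0}|_W+2\sigma_{1,1}|_W$, whose change-of-basis determinant is $1$, so $\{[\Pi],[\Xi]\}$ is a $\ZZ$-basis of $H^4(W,\ZZ)$ as well.

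The main obstacle I expect is this integral (rather than merely rational) identification of generators. It is resolved by playing Gram-matrix determinants against Poincar\'e--duality unimodularity, which in turn relies essentially on the torsion-freeness of $H^4$ established in the second step.
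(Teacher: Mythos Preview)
Your proof is correct and structurally the same as the paper's: Lefschetz hyperplane plus the Euler number from Corollary~\ref{Chern-classes-W5} pin down the Betti numbers, and unimodularity of a Gram matrix against Poincar\'e duality identifies an integral basis of $H^4$. The only difference is computational. The paper computes $\Pi^2=1$ and $\Xi^2=2$ directly from Lemma~\ref{lemma-W5-normal-bundle} (self-intersection equals $c_2$ of the normal bundle), then extracts $\Pi\cdot\Xi=-1$ from the relation $\sigma_{1,0}^2|_W=2[\Xi]+3[\Pi]$, and checks unimodularity on $\{[\Pi],[\Xi]\}$ directly. You instead route everything through the intermediate basis $\{\sigma_{2,0}|_W,\sigma_{1,1}|_W\}$ via the projection formula, show it is already unimodular, and then identify $\{[\Pi],[\Xi]\}$ as the dual basis. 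Your route is slightly longer but more self-contained (it bypasses the normal-bundle lemma entirely); the paper's is terser but relies on that prior computation. Your torsion-freeness argument for $H^4$ is a small bonus beyond what the statement asks, since the claim is only about $H^4/\operatorname{Tors}$.
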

\begin{proof}
The first two statements follow by the Lefschetz Hyperplane Section Theorem.
Then by Corollary \ref{Chern-classes-W5} we have $\rk H^4(W,\ZZ)=2$.
By Lemma \ref{lemma-W5-normal-bundle} $\Pi^2=1$ and $\Xi^2=2$.
Furthermore, $\sigma_{1,0}^2|_W\sim 2\Xi+3\Pi$ and so $\Pi\cdot \Xi=-1$.
Hence the intersection matrix of $\Xi$ and $\Pi$ is unimodular. Now
the last assertion follows by the Poincar\'e duality.
\end{proof}

\begin{lemma}\label{lemma-W5-plane}
If $A$ is a singular member of the linear system $|H|$, then the threefold $A$ contains a plane.
\end{lemma}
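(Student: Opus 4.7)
My plan is to show that any singular point $p$ of $A$ must lie on some plane of $W$. This already yields the conclusion by the following linearity observation: if $\Lambda\subset W$ is a plane (linear $\PP^2$) passing through $p$, then $\Lambda=T_p\Lambda\subset T_pW$; since $A=W\cap H$ is singular at $p$, one has $T_pW\subset H$, hence $\Lambda\subset W\cap H=A$.

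By Proposition \ref{Proposition-2.2.}, the set of points of $W$ lying on some plane of $W$ is $R\cup\Xi=R$. So it suffices to verify that $p\in R$. The easy sub-cases split as follows: if $p\in\Xi$ then $\Xi$ itself is a plane through $p$ and $\Xi\subset A$, while if $p\in R\setminus\Xi$ then the unique $\sigma_{3,1}$-plane through $p$ is contained in $A$ by the same argument.

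For the remaining case $p\notin R$, I would invoke Corollary \ref{cor:2.8} to obtain a one-parameter family of lines on $W$ through $p$. All these lines lie in $T_pW\subset H$ and hence in $A$, and their union is a $2$-dimensional cone $\Sigma_p\subset A$ with vertex $p$, namely the cone over the curve $\Gamma_p\subset \PP(T_pW)\cong\PP^3$ parametrising them; for $W_5$ this $\Gamma_p$ is a conic. I would then exploit the key fact from Proposition \ref{Proposition-2.2.}(iv) that every plane of $W$ is contained in the single hyperplane $H_0$ cut out by $R=W\cap H_0$, and analyse the relative position of $T_pW$ and $H_0$ to argue that either $\Gamma_p$ must degenerate---so that $\Sigma_p$ decomposes into planes and one of them is in $A$---or the tangent hyperplane $H$ acquires an additional tangency point at some $q\in R$, reducing to the cases already handled.

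The main obstacle will be this last step: showing that no singular member of $|H|$ has its singular locus entirely disjoint from $R$. Rather than chasing this through a direct dimension count on the dual variety $W^*\subset (\PP^7)^*$ (which does not seem to yield the inclusion $W^*\subset\{H:H\supset\Lambda\text{ for some plane }\Lambda\subset W\}$ cheaply), I would rely on the classical classification of singular del Pezzo threefolds of degree $5$ in the Fano-variety literature and verify the conclusion type by type, each such threefold being explicitly known to contain a plane.
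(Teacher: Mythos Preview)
Your strategy ultimately converges on the paper's: after the easy reduction, both appeal to the classification of singular quintic del Pezzo threefolds. Your case split (whether a singular point $p$ lies in $R$) is essentially the paper's (whether $\Sing(A)$ is positive-dimensional), once one notes that $\Sing(A)\cap R=\emptyset$ forces $\Sing(A)$ to be finite because $R\in|H|$ is ample; the paper makes this reduction explicit, and your tangent-space inclusion argument for $p\in R$ is exactly what the paper uses in that case.

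The genuine gap is in the hard case. The classifications you allude to (e.g.\ \cite[Cor.~8.7]{Prokhorov-GFano-1}, \cite[Th.~2.9]{Fujita-1986-1}) describe del Pezzo threefolds of degree $5$ with \emph{terminal} Gorenstein singularities, not arbitrary ones; before invoking them you must establish that $A$ is of this type. The paper supplies precisely this missing step: it cuts $A$ by a general $H'\in|H|$ through the isolated singular point $P$; by Bertini $H'\cap A$ is irreducible, singular only at $P$, and for general $H'$ not a cone, hence by \cite{Hidaka1981} it is a Du Val del Pezzo surface of degree $5$, whence (via \cite[Cor.~5.38]{Kollar-Mori-1988}) the singularities of $A$ are terminal Gorenstein. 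Only then does the classification apply and yield a plane in $A$. Your cone-of-lines interlude does not bridge this gap: for $p\notin R$ the base curve $\Gamma_p$ cannot degenerate, since a linear component would produce a plane of $W$ through $p$ and force $p\in R$; so your first alternative is vacuous, and the second is precisely the assertion you are trying to prove.
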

\begin{proof}
If the singular locus $\Sing(A)$ is of positive dimension, then 
it meets $R$. Pick a point $P\in \Sing(A)\cap R$.
There is a $\sigma_{3,1}$-plane $\Pi\subset R$ passing through $P$.
Since $P\in \Sing(A)$ and $A$ is a hyperplane section, $\Pi\subset A$.
Thus we may assume that $A$ has isolated singularities.
Let $P\in \Sing(A)$. Take a general member $H\in |H|$ passing through $P$.
By Bertini's theorem $H\cap A$ is an irreducible surface with $\Sing(H\cap A)=\{P\}$. 
Clearly, $H\cap A$ is not a cone for a general choice of $H\in |H|$.
Then by \cite{Hidaka1981} $H\cap A$ is a del Pezzo surface of degree $5$ 
with only Du Val singularities. In particular, the singularities of $A$ are
terminal and Gorenstein \cite[Cor. 5.38]{Kollar-Mori-1988}.
Thus $A$ is a Fano threefold of index $2$ and degree $5$ with terminal Gorenstein singularities.
There are 3 classes of such varieties \cite[Cor. 8.7]{Prokhorov-GFano-1} (see also 
\cite[Th. 2.9]{Fujita-1986-1}, \cite{Todd1930}).
As follows from the classification, in any case $A$ contains a plane.
\end{proof}

We construct below a cylinder in 
$W\setminus A$. This is done in Proposition \ref{Proposition-3.2} 
and Corollary \ref{Corollary-3.3.} in the case that $A$ contains a $\sigma_{2,2}$-plane, 
and in Proposition \ref{Proposition 3.8.} 
and Corollary \ref{Corollary-3.3a.} in the case that $A$ contains a $\sigma_{3,1}$-plane.

\begin{proposition} [{\cite[II, (7.8)]{Fujita1980-84}, {\cite{Prokhorov1994}}, 
\cite[3.6]{Debarre2012}}] 
\label{Proposition-3.2}
Let $\Xi\subset W$ be the $\sigma_{2,2}$-plane. 
Then there is a commutative diagram 
\[
\xymatrix{
&\widetilde W\ar[dr]^{\varphi}\ar[dl]_{\rho}&
\\
W\ar@{-->}[rr]^{\phi} &&\PP^4 
}
\]
where 
\begin{enumerate} 
\item 
$\rho : \widetilde W\longrightarrow W$ is the blow-up of $\Xi$, 
$\phi : W\dashrightarrow \PP^4$ is the projection from $\Xi$, $\varphi : W\longrightarrow \PP^4$ is 
the blowup of a rational normal cubic curve $Y\subset \PP^4$; 

\item 
$\varphi :\widetilde W \longrightarrow \PP^4$ is defined by the linear system $|\rho^* H - E|$, 
where $E =\rho^{-1}(\Xi)$ 
is the exceptional divisor; 

\item 
$\varphi(E) = \PP^3 = \langle Y\rangle$
is the linear span of $Y$; 

\item 
the exceptional divisor $\widetilde R=\varphi^{-1}(Y)$ of $\varphi$ coincides 
with the proper 
transform of $R$ in $\widetilde W$, and $\widetilde R\sim \rho^*H - 2E$ 
on $\widetilde W$. 
\end{enumerate}
\end{proposition}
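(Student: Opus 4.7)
The plan is to construct the Sarkisov link by first blowing up $\Xi$ and then identifying the other Mori contraction of $\widetilde{W}$ as the blowup of a twisted cubic $Y\subset\PP^4$. First, the linear system $|\rho^*H-E|$ is base point free, since the plane $\Xi\subset\PP^7$ is cut out scheme-theoretically by four linear forms, so hyperplane sections of $W$ containing $\Xi$ generate $\mathcal{I}_{\Xi,W}\otimes\OOO_W(H)$. Writing
\[
-K_{\widetilde{W}}=\rho^*H+2(\rho^*H-E)
\]
shows $\widetilde{W}$ is Fano with Picard rank $2$. By the Cone Theorem, there is a Mori contraction $\varphi\colon\widetilde{W}\to U$ different from $\rho$, with supporting function proportional to $M:=\rho^*H-E$.

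Next I would compute the intersection numbers on $\widetilde{W}$ via Lemma~\ref{lemma-intersection-theory}(ii) applied to $C=\Xi$, using $H^2\cdot\Xi=1$, $K_\Xi=-3H|_\Xi$, $c_2(\Xi)=3$, and $c_2(W)\cdot\Xi=5$ (the last obtained from Lemma~\ref{lemma-W5-normal-bundle} via the Whitney formula $c_t(W)|_\Xi=c_t(\Xi)\,c_t(\NNN_{\Xi/W})$). A direct calculation yields $M^4=1$, so $U\simeq\PP^4$, $M=\varphi^*L$, and $\dim|M|=4$ by Riemann-Roch and Kawamata-Viehweg vanishing; this proves (ii).

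To identify $\widetilde{R}$ I would invoke Proposition~\ref{Proposition-2.2.}(ii): through a general point of $\Xi$ pass exactly two $\sigma_{3,1}$-planes (the two tangent lines to the conic $C$ through that point), so $R$ has multiplicity $2$ along $\Xi$, whence $\widetilde{R}\sim\rho^*H-2E$. Direct verification gives $M^3\cdot\widetilde{R}=0$, so $\widetilde{R}$ is $\varphi$-exceptional, while $M\cdot\widetilde{R}^3=3$ and $\widetilde{R}^4=13$. Comparing with Lemma~\ref{lemma-intersection-theory}(i) applied to $\varphi$ forces the contracted locus $Y=\varphi(\widetilde{R})$ to be a curve with $\deg Y=3$ satisfying $5\deg Y+2g(Y)-2=13$, hence $g(Y)=0$. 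Since a smooth plane cubic has genus $1$, the rational cubic $Y$ must span a $\PP^3$ and thus be a twisted cubic; Ando's theorem \cite{Ando1985} then realizes $\varphi$ as the blowup of this smooth curve, proving (i) and (iv).

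Finally, $M^3\cdot E=1$ shows that $\varphi(E)$ is a hyperplane $\PP^3$. Rewriting $E=\varphi^*L-\widetilde{R}$ as $\varphi^*L\sim E+\widetilde{R}$, the unique hyperplane $\langle Y\rangle\in|\mathcal{I}_Y(1)|$ pulls back to $E+\widetilde{R}$, so $\varphi(E)\subset\langle Y\rangle$ and therefore $\varphi(E)=\langle Y\rangle$, giving (iii). The main delicate point is the multiplicity-$2$ computation for $R$ along $\Xi$, on which the class identification $\widetilde{R}\sim\rho^*H-2E$ hinges; the rest is a careful bookkeeping of intersection numbers on $\widetilde{W}$.
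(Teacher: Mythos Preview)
Your approach is essentially the same as the paper's: blow up $\Xi$, show $|\rho^*H-E|$ is free and $-K_{\widetilde W}$ ample, compute $(\rho^*H-E)^4=1$ and $(\rho^*H-E)^3\cdot E=1$ via Lemmas~\ref{lemma-intersection-theory}(ii) and \ref{lemma-W5-normal-bundle} to identify the second contraction as a birational morphism onto $\PP^4$ with $\varphi(E)$ a hyperplane. You then go beyond the paper's sketch by pinning down $Y$ as a twisted cubic from $M\cdot\widetilde R^3=3$ and $\widetilde R^4=13$, and by giving a clean argument for (iii) via $E\sim\varphi^*L-\widetilde R$ and uniqueness of the hyperplane through $Y$; this is all correct and welcome detail.

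The one place your argument is weaker than the paper's is precisely the point you flag: the multiplicity of $R$ along $\Xi$. Your claim that ``two $\sigma_{3,1}$-planes through a general point of $\Xi$'' yields multiplicity exactly $2$ is heuristic; making it rigorous requires analyzing the local branch structure of the swept-out threefold $R$ near $\Xi$. The paper sidesteps this entirely: from $\Sing(R)=\Xi$ (Proposition~\ref{Proposition-2.2.}(iv)) one gets $k\ge 2$, and since $M=\rho^*H-E$ is base point free (hence $M^3$ is an effective movable curve class), the inequality
\[
(\rho^*H-E)^3\cdot(\rho^*H-kE)=2-k\ \ge\ 0
\]
forces $k\le 2$. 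This two-line argument replaces your delicate step completely, and I recommend you adopt it. With that substitution your proof is complete (modulo the standard check, implicit in both treatments, that $\varphi$ has no $2$-dimensional fibers before invoking \cite{Ando1985}; here $M^2\cdot\widetilde R^2=0$ already shows $\dim\varphi(\widetilde R)\le 1$).
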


\begin{proof}[Sketch of the proof.] 
Using Lemmas \ref{lemma-intersection-theory} and 
\ref{lemma-W5-normal-bundle} it is easy to deduce that $(H^* - E)^4 = 1$ and
$(H^* - E)^3 \cdot E = 1$. Hence $\varphi$ is a birational morphism, 
$\varphi(E) = \PP^3$, and $\varphi(\widetilde W) = \PP^4$.
Since $R \sim H^* - kE$ for some $k \ge 2$, 
we have $(H^* - E)^3\cdot R = 2 - k \ge 0$. Thus $k = 2$ and 
$\dim \varphi(R) \le 2$. 
\end{proof}

The next corollary is straightforward. 

\begin{corollary}\label{Corollary-3.3.} 
In the notation as above, 
let $M\subset W$ be a hyperplane section containing the 
$\sigma_{2,2}$-plane $\Xi$ in $W$
\textup(the case $M=R$ is not excluded\textup),
and let $\widetilde M$ be the proper transform of $M$ in $\widetilde W$.
Then $\varphi(\widetilde M)$ is a hyperplane in $\PP^4$, and 
$W\setminus (M\cup R)\simeq \PP^4\setminus 
(\varphi(\widetilde M)\cup \varphi(E))$.
In particular, $W\setminus (M\cup R)$ contains a cylinder.
\end{corollary}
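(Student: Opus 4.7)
The plan is to exploit the Sarkisov link of Proposition~\ref{Proposition-3.2} and trace $M$, $R$, $\Xi$ through the two birational maps $\rho$ and $\varphi$.

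First I would show that $\varphi(\widetilde M)$ is a hyperplane in $\PP^4$ in the main case $M\ne R$. Since $M$ contains $\Xi$, I write $\rho^*M\sim\widetilde M+mE$, where $m\ge 1$ is the multiplicity of $M$ along $\Xi$. Intersecting with the class $(\rho^*H-E)^3=(\varphi^*L)^3$ and using the numbers $(\rho^*H-E)^4=1$ and $(\rho^*H-E)^3\cdot E=1$ already recorded in the proof of Proposition~\ref{Proposition-3.2} (so that $(\rho^*H-E)^3\cdot\rho^*H=2$), I get
\[
\deg\varphi(\widetilde M)=(\rho^*H-E)^3\cdot\widetilde M=(\rho^*H-E)^3\cdot(\rho^*H-mE)=2-m.
\]
Now $\widetilde R\sim\rho^*H-2E$ is the unique $\varphi$-exceptional prime divisor, and $M\ne R$ forces $\widetilde M\ne\widetilde R$, so $\widetilde M$ is not $\varphi$-contracted and the left side is positive. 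Hence $m=1$, $\widetilde M\in|\varphi^*L|$, and $\varphi(\widetilde M)$ is a hyperplane in $\PP^4$.

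Next I would build the isomorphism. By Proposition~\ref{Proposition-3.2}, $\rho$ restricts to an isomorphism $\widetilde W\setminus E\xrightarrow{\sim}W\setminus\Xi$ and $\varphi$ restricts to an isomorphism $\widetilde W\setminus\widetilde R\xrightarrow{\sim}\PP^4\setminus Y$. Combining them on $\widetilde W\setminus(E\cup\widetilde R)$ and using $\Xi\subset R$ together with $Y\subset\varphi(E)=\langle Y\rangle$, I obtain $\phi\colon W\setminus R\xrightarrow{\sim}\PP^4\setminus\varphi(E)$. Removing $M$ on the left corresponds, via $\rho$, to further removing $\widetilde M$ from $\widetilde W$, and then, via $\varphi$, to removing $\varphi(\widetilde M)$ on the right. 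This yields
\[
W\setminus(M\cup R)\;\simeq\;\PP^4\setminus\bigl(\varphi(\widetilde M)\cup\varphi(E)\bigr).
\]

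Finally, to exhibit the cylinder I note that $\varphi(E)=\PP^3$ and $\varphi(\widetilde M)$ are two distinct hyperplanes in $\PP^4$. In affine coordinates in which they become $\{x_0=0\}$ and $\{x_1=0\}$, the complement is isomorphic to $(\A^1\setminus\{0\})\times\A^3\simeq\bigl((\A^1\setminus\{0\})\times\A^2\bigr)\times\A^1$, a cylinder. The limiting case $M=R$ reduces to $W\setminus R\simeq\PP^4\setminus\varphi(E)\simeq\A^4$, itself a cylinder. I do not anticipate any serious obstacle: the only real content is the intersection-theoretic computation of the first step, and the rest is a formal unwinding of the Sarkisov link.
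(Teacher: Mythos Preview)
Your argument is correct and is precisely the ``straightforward'' verification the paper leaves to the reader: compute the multiplicity $m$ via $(\rho^*H-E)^3\cdot\widetilde M$, then chase the open sets through the link of Proposition~\ref{Proposition-3.2}. One small point you assert without justification is that $\varphi(\widetilde M)\neq\varphi(E)$ when $M\neq R$; this follows since $\widetilde M$ is irreducible while $\varphi^*\langle Y\rangle=\widetilde{\langle Y\rangle}+\widetilde R$ is not, but in any case the cylinder conclusion is unaffected even if the two hyperplanes coincided.
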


\begin{proposition}[{\cite[II, \S 10]{Fujita1980-84}}, {\cite{Prokhorov1994}}]
\label{Proposition 3.8.}
Let $\Pi\subset W$ be a $\sigma_{3,1}$-plane. Then 
there is a commutative diagram 
\[
\xymatrix{
&\widetilde W\ar[dr]^{\varphi}\ar[dl]_{\rho}&
\\
W\ar@{-->}[rr]^{\phi} &&Q\subset \PP^4 
}
\]
where 
\begin{enumerate}
\item 
$\rho : \widetilde W\longrightarrow W$ is the blow-up of 
$\Pi$, $Q\subset \PP^4$ is a smooth quadric, and
$\phi : W\dashrightarrow Q\subset \PP^4$ is 
the projection from $\Pi$; 

\item 
the morphism $\varphi :\widetilde W \longrightarrow Q\subset \PP^4$ 
is defined by the linear system $|\rho^*H - E|$, 
where $E =\rho^{-1}(\Pi)$ 
is the exceptional divisor of $\rho$; 

\item 
a general fiber of $\varphi$ is the strict transform of a line meeting $\Pi$, 
and each one-dimensional 
fiber of $\varphi$ is isomorphic to $\PP^1$; 

\item there exists 
exactly
one two-dimensional fiber $\widetilde \Xi$ of $\varphi$, 
where $\widetilde \Xi\simeq \PP^2$ is the strict transform of the
$\sigma_{2,2}$-plane $\Xi$ in $W$; 

\item the restriction $\varphi |_ E: E \to Q $
is the blow-up of a line $Y\subset Q$, and the image $\varphi(\widetilde \Xi)$ is a point in $Y$.
\end{enumerate}
\end{proposition}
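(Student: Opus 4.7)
The plan is to parallel the argument sketched in Proposition \ref{Proposition-3.2} and then read off the fiber geometry from the fact that $\phi$ is the restriction of the linear projection $\PP^7\dashrightarrow\PP^4$ with centre $\Pi$.

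First I assemble the intersection numbers on $\widetilde W$. Applying Lemma \ref{lemma-intersection-theory}(ii) with $C=\Pi\simeq\PP^2$, and using $K_W=-3H$, $H|_\Pi=l$, $K_\Pi=-3l$, $c_2(\Pi)=3$, together with $c_2(W)\cdot\Pi=4$ --- which I obtain from Corollary \ref{Chern-classes-W5} by the projection formula and the Schubert identities $\int_G\sigma_{2,0}\cdot\sigma_{3,1}=1$, $\int_G\sigma_{1,1}\cdot\sigma_{3,1}=0$ in $G=\Gr(2,5)$ --- I get $E^4=1$, and in particular
\[
(H^*-E)^4=0,\qquad (H^*-E)^3\cdot H^*=2,\qquad (H^*-E)^3\cdot E=2.
\]
Since $\Pi\subset\PP^7$ is a linear subspace, its ideal is generated by linear forms, so $|H^*-E|$ is base-point free on $\widetilde W$ and $\varphi$ is a morphism onto an irreducible threefold of degree $2$ in $\PP^4$, i.e.\ onto a quadric $Q$. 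This establishes (i) and (ii).

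For (iii) and (iv) I exploit that $W$ is an intersection of quadrics containing $\Pi$. For any $3$-plane $\Lambda=\langle\Pi,y\rangle$, each quadric through $W$ splits along $\Lambda$ as $\Pi\cup\Pi'_\Lambda$, so the residual of $W\cap\Lambda$ is the intersection of the corresponding residual $2$-planes in $\Lambda\simeq\PP^3$, generically a single line meeting $\Pi$. Hence a general fiber of $\varphi$ is the strict transform of such a line, isomorphic to $\PP^1$. A fiber can be $2$-dimensional only when $\Lambda$ contains a second plane of $W$ meeting $\Pi$ along a line; by Proposition \ref{Proposition-2.2.}(i)--(iii) the only such plane is the unique $\sigma_{2,2}$-plane $\Xi$ (two distinct $\sigma_{3,1}$-planes meet only in a point and hence cannot span a $3$-plane with $\Pi$), which meets $\Pi$ in a tangent line to the fixed conic $C$. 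So there is exactly one $2$-dimensional fiber $\widetilde\Xi\simeq\PP^2$, and every other $1$-dimensional fiber is a $\PP^1$.

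Step (v) is where I expect the main difficulty. First, since a general fiber of $\varphi$ meets $E$ transversely in a single point (the lift of the unique intersection of the residual line with $\Pi$), the restriction $\varphi|_E\colon E\to Q$ has generic fibre a point, hence is birational onto $Q$. Second, to identify $\varphi|_E$ as the blow-up of a line $Y\subset Q$, I would combine the Ando--Andreatta classification of extremal birational contractions on smooth fourfolds (compare the appeal to \cite{Ando1985} in the proof of Proposition \ref{sarkisov-link-2.2}(iii)) with the normal-bundle data $c_1(\NNN_{\Pi/W})=0$, $c_2(\NNN_{\Pi/W})=1$ from Lemma \ref{lemma-W5-normal-bundle}: these pin down $E=\PP(\NNN^*_{\Pi/W})$ as a specific $\PP^1$-bundle over $\PP^2$ whose unique alternative birational contraction is the blow-down of an exceptional ruling onto a line inside a smooth quadric threefold. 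The image of $\widetilde\Xi$ automatically lies on $Y$ since $\widetilde\Xi\cap E$ is a $\PP^1$ collapsed by $\varphi|_E$. Smoothness of $Q$ can be cross-checked against the classification of $W_5$ in \cite{Fujita1980-84, Prokhorov1994} cited at the start of the proposition.
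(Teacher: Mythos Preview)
Your treatment of (i)--(iv) is sound, and for (iv) you take a genuinely different route from the paper. The paper argues via an Euler number count: $\Eu(\widetilde W)=\Eu(W)+3=9$, while a $\PP^1$-fibration over $Q$ with $n$ two-dimensional fibers gives $\Eu(\widetilde W)=\sum\Eu(\widetilde\Xi_i)+2(\Eu(Q)-n)=8+n$, whence $n=1$. Your argument instead reads the number of two-dimensional fibers directly off Proposition~\ref{Proposition-2.2.}: only the $\sigma_{2,2}$-plane $\Xi$ meets $\Pi$ along a line, so only one $3$-plane $\langle\Pi,y\rangle$ can produce a residual plane. Both work; yours is more elementary but uses the detailed geometry of $W_5$, while the paper's Euler count is more robust and would transport to other situations. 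One small caveat: your claim that ``every other $1$-dimensional fiber is a $\PP^1$'' is not actually established by the residual-intersection picture (the residual could in principle degenerate); the paper gets this for free from the structure of Mori contractions of relative dimension~$1$.

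Part (v) is where your sketch has a real gap. You write that the Chern numbers $c_1(\NNN_{\Pi/W})=0$, $c_2(\NNN_{\Pi/W})=1$ ``pin down $E=\PP(\NNN^*_{\Pi/W})$ as a specific $\PP^1$-bundle\ldots whose unique alternative birational contraction is the blow-down\ldots onto a line inside a smooth quadric threefold.'' But Chern classes do not determine a rank-$2$ bundle on $\PP^2$, so this does not identify $E$, nor its second contraction, nor the smoothness of $Q$. Deferring smoothness of $Q$ to the cited references is also not a proof. The paper's argument is quite different and worth internalising: one first observes that $-K_E=(3\rho^*H-2E)|_E$ is ample, so $E$ is a smooth Fano threefold with $\rk\Pic(E)=2$, and $\varphi|_E\colon E\to Q$ is itself an extremal Mori contraction. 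Its target must then be $\QQ$-factorial; since a quadric threefold with an isolated singularity is \emph{not} $\QQ$-factorial, $Q$ is forced to be smooth. The classification of extremal contractions on smooth Fano threefolds then gives that $\varphi|_E$ is the blow-up of a smooth curve $Y\subset Q$, and a direct computation of $K_E^3=(K_{\widetilde W}+E)^3\cdot E$ via Lemmas~\ref{lemma-intersection-theory} and~\ref{lemma-W5-normal-bundle} yields $\deg Y=1$.
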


\begin{proof} 
Since $\Pi$ is a scheme-theoretic intersection of members of the linear system $|H-\Pi|$,
the linear system $|\rho^*H-E|$ is base point free.
Hence the divisor $-K_{\widetilde W}=2\rho^*H+\rho^*H-E$ is ample, i.e. 
$\widetilde W$ is a Fano fourfold with $\rk \Pic(\widetilde W)=2$.
By the Cone Theorem there exists a Mori contraction $\varphi: \widetilde W\to U$ 
different from $\rho$.
If $ \widetilde C\subset \widetilde W$ is the proper transform of a line $C\subset W$ 
meeting $\Pi$,
then $(\rho^*H-E)\cdot \widetilde C=0$. In particular, the divisor $\rho^*H-E$ 
is not ample.
So, $\rho^*H-E$ defines a supporting function for the extremal ray generated 
by curves contained in 
the fibers of $\varphi$. Moreover, we can write $\rho^*H-E=\varphi^* L$, 
where $L$ is the ample generator of 
$\Pic(U)\simeq \ZZ$. 
We have $L^4=(\rho^*H-E)^4=0$.
By the Riemann-Roch and the Kodaira Vanishing Theorems we obtain
$\dim |\rho^*H-E | =4$.
Therefore, $|\rho^*H-E |$ defines a 
morphism, say, $\upsilon:\widetilde W\to Q\subset \PP^4$, 
where $\upsilon(E)=\upsilon(\widetilde W)$ is a 
quadric $Q\subset \PP^4$; indeed,
$(\rho^*H-E )^3\cdot E=2$.
The restriction $\upsilon|_E: E\to Q$ is a birational morphism. 
Furthermore,
$\upsilon$ can be factorized as follows: 
$\upsilon:\widetilde W \overset{\varphi}{\longrightarrow} U \longrightarrow Q$.
Since $Q$ is normal, the birational morphism $U \longrightarrow Q$ is an isomorphism. 
Hence we may identify $U$ with $Q$ and $\upsilon$ with $\varphi$.
Since $\rk\Pic(Q)=1$ and $\rk\Pic(E)=2$, 
the birational morphism $\varphi|_E:E\to Q$ cannot be an isomorphism. 

On the other hand, the Mori extremal contraction $\varphi$ generically is a 
$\PP^1$-bundle with 
at most finite number of fibers of dimension $2$.
Therefore $\dim \Sing(Q)\le 0$. The divisor $-K_E=(3\rho^*H-2E)|_E$ is ample, i.e. $E$ 
is a Fano threefold with 
$\rk\Pic(E)=2$. Then $\varphi|_E:E\to Q$ is as well an extremal Mori contraction. 
Since a quadric threefold with isolated singularity is not $\QQ$-factorial,
$Q$ must be smooth. This shows (i)-(iii).

From the classification of 
three-dimensional extremal Mori contractions we may conclude that the image, say, $Y$ of $E$ in 
$\PP^4$ is a smooth curve. We claim that $Y$ is in fact a line. Indeed, by the Adjunction Formula 
we have $K_E^3=(K_{\widetilde W}+E)^3\cdot E$. A direct computation of the both sides 
of the latter equality 
using the formulas in Lemmas \ref{lemma-intersection-theory}(ii) and \ref{Lemma-2.4.} 
gives $\deg Y=1$, as claimed. 

Let further $\widetilde \Xi_{1},\dots,\widetilde \Xi_{n}$ be the two-dimensional fibers of $\varphi$
(the case $n=0$ is not excluded).
Then each $\rho(\widetilde \Xi_{i})$ is a plane meeting $\Pi$ along a line. 
Since $\varphi$ is a $\PP^1$-bundle outside of $\widetilde \Xi_{1}\cup \dots\cup\widetilde \Xi_{n}$,
we have 
\[
\Eu(\widetilde W)=\Eu(W)+3 =9= \sum \Eu( \widetilde \Xi_{i})+2(\Eu(Q)-n)=8+n.
\]
Hence $n=1$, i.e. there exists exactly one two-dimensional fiber $\widetilde \Xi=\widetilde \Xi_1$.
By Proposition \ref{Proposition-2.2.} $\rho(\widetilde \Xi)$ is a $\sigma_{2,2}$-plane.
This proves (iv) and (v). 
\end{proof}

Now we can deduce the following corollary.

\begin{corollary}\label{Corollary-3.3a.}
In the notation as before, 
let $M\subset W$ be a hyperplane section containing a 
$\sigma_{3,1}$-plane $\Pi$ in $W$, 
and let $\widetilde M $ be its proper transform in $\widetilde W$.
Then $\varphi(\widetilde M)$ is a hyperplane section of $Q\subset \PP^4$ and 
$W\setminus M\simeq \widetilde W\setminus (\widetilde M\cup E)$.
The projection $\widetilde W\setminus (\widetilde M\cup E) \to Q\setminus \varphi(\widetilde M)$
is a locally trivial $\mathbb A^1$-bundle. 
In particular, $W\setminus (M\cup R)$ contains a cylinder.
\end{corollary}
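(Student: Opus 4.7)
The strategy is to mimic the proof of Corollary \ref{Corollary-3.3.}, now invoking the Sarkisov link of Proposition \ref{Proposition 3.8.} in place of that of Proposition \ref{Proposition-3.2}: the role formerly played by the ambient $\PP^4$ is now taken by the quadric $Q\subset\PP^4$, and a cylinder on a suitable open subset of $Q$ will be pulled back through the link to a cylinder sitting inside $W\setminus(M\cup R)$.

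I would first identify $\varphi(\widetilde M)$ as a hyperplane section of $Q$. Writing $M=W\cap H_0$ for a hyperplane $H_0\subset\PP^7$ with $\Pi\subset H_0$, the linear projection $\phi$ from $\Pi$ sends $H_0$ to a hyperplane $L_0\subset\PP^4$, so $\phi(M)=L_0\cap Q$; this identifies $\varphi(\widetilde M)=L_0\cap Q\in|L|$. Equivalently, $\widetilde M\sim\rho^*H-mE$ with $m=\operatorname{mult}_\Pi M\geq 1$, and the relation $\rho^*H\sim\varphi^*L+E$ of Proposition \ref{Proposition 3.8.}(ii) yields $\widetilde M\sim\varphi^*L-(m-1)E$, again giving $\varphi(\widetilde M)\in|L|$. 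The isomorphism $W\setminus M\simeq\widetilde W\setminus(\widetilde M\cup E)$ is then standard: $\rho$ induces an isomorphism $\widetilde W\setminus E\to W\setminus\Pi$ carrying $\widetilde M\setminus E$ onto $M\setminus\Pi$, and $\Pi\subset M$.

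The heart of the argument is the $\A^1$-bundle assertion, which I would derive by combining Proposition \ref{Proposition 3.8.}(iii)--(v) with the explicit description of $\phi$ above. Over the complement of the degeneration locus $Y\cup\{\varphi(\widetilde\Xi)\}\subset Q$ the morphism $\varphi$ is a $\PP^1$-bundle; its fiber $F_q$ is the strict transform of the unique line $\ell_q\subset W$ through $\Pi$ projecting to $q$, and $F_q$ meets $E$ transversally in exactly one point. The line $\ell_q$ lies in $M=W\cap H_0$ if and only if the $3$-plane $\langle\Pi,p_q\rangle$ is contained in $H_0$, equivalently $q\in L_0\cap Q=\varphi(\widetilde M)$. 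Hence for $q\in Q\setminus\varphi(\widetilde M)$ in the $\PP^1$-bundle locus, the fiber of the restricted projection is $F_q\setminus E\cong\A^1$, and local triviality of the $\PP^1$-bundle passes to the $\A^1$-bundle.

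The main obstacle is the degeneration locus $Y\cup\{\varphi(\widetilde\Xi)\}\subset Q$, over which the fibers either collapse into $E$ (for $q\in Y\setminus\{\varphi(\widetilde\Xi)\}$) or jump to the two-dimensional $\widetilde\Xi$. To absorb it I would use that $\Xi\subset R$ by Proposition \ref{Proposition-2.2.}(iv), so $\widetilde\Xi\subset\widetilde R$; after further removing $\widetilde R$ from the source, the restricted map $\widetilde W\setminus(\widetilde M\cup E\cup\widetilde R)\to Q\setminus(\varphi(\widetilde M)\cup Y)$ becomes an honest locally trivial $\A^1$-bundle over a nonempty Zariski open. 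Picking an affine open $U$ in this base over which the bundle trivializes gives a cylinder $\varphi^{-1}(U)\cong U\times\A^1$, which under $\rho$ maps isomorphically onto a cylinder inside $W\setminus(M\cup R)$, as required.
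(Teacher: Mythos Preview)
The paper states this corollary without proof, merely introducing it with ``Now we can deduce the following corollary,'' so there is no detailed argument to compare against. Your approach---reading off the class of $\widetilde M$ from the projection and from $\rho^*H-E=\varphi^*L$, then stripping the section $E$ from the $\PP^1$-fibration---is exactly the intended deduction from Proposition~\ref{Proposition 3.8.}, and it is essentially correct.

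A couple of small points. In your divisor-class argument you allow $m=\operatorname{mult}_\Pi M\ge 1$ and then assert $\varphi(\widetilde M)\in|L|$; strictly this uses $m=1$, since for $m>1$ the class $\varphi^*L-(m-1)E$ is not a pullback (note $E$ dominates $Q$). In fact $m=1$ is forced: the identity $h^0(\widetilde W,\varphi^*L)=h^0(Q,L)$ (from $\varphi_*\OOO_{\widetilde W}=\OOO_Q$) shows $|\varphi^*L|=\varphi^*|L|$, and $\widetilde M+(m-1)E\in|\varphi^*L|$ would then have to be a pullback, impossible if $m>1$. Also, in your final sentence the cylinder is $\varphi^{-1}(U)\setminus E$, not $\varphi^{-1}(U)$.

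More substantively, you are right to flag the degeneration over $Y$: for $q\in Y\setminus\{\varphi(\widetilde\Xi)\}$ one has $\varphi^{-1}(q)\subset E$ (both $\varphi^{-1}(q)$ and $(\varphi|_E)^{-1}(q)$ are irreducible $\PP^1$'s, one inside the other), so the fiber of $\widetilde W\setminus(\widetilde M\cup E)$ over such $q$ is empty. Thus the $\A^1$-bundle assertion in the corollary should be read as holding over $Q\setminus(\varphi(\widetilde M)\cup Y)$, which is how you use it. Since $\widetilde R\sim\varphi^*L$ is itself a pullback (so removing it deletes full fibers), passing to an affine open $U\subset Q\setminus(\varphi(\widetilde M)\cup\varphi(\widetilde R)\cup Y)$ where the bundle trivializes gives the desired cylinder in $\widetilde W\setminus(\widetilde M\cup E\cup\widetilde R)\simeq W\setminus(M\cup R)$, exactly as you conclude.
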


\noindent \textit{Proof of Theorem \textup{\ref{theorem-w5}}}. 
By Lemma 3.5, the singular hyperplane section $A$ of $W$ as in 
Theorem \ref{theorem-w5} contains a plane. 
This is either a $\sigma_{2,2}$-plane, or a $\sigma_{3,1}$-plane. In any case, 
by Corollaries \ref{Corollary-3.3.}
and \ref{Corollary-3.3a.}
the complement $W\setminus A$ contains a cylinder. 
\qed

\section{Cylindrical Mukai fourfolds of genus 8} \label{sec:g=8}
In this section we construct a family of cylindrical Mukai fourfolds of genus 
$8$ in $\PP^{10}$, see 
Theorem \ref{thm:cylinder-in-V14} below.
According to \cite{Mukai-1989} any Fano fourfold of index $2$ and genus $8$ 
with $\rk \Pic(V)=1$ is a section of 
the Grassmannian $\Gr(2,6)$ under its Pl\"ucker embedding 
in $\PP^{14}$ by a linear subspace of dimension $10$. 
The Grassmannian $\Gr(2,6)$
contains planes of two kinds, namely, the two-dimensional Schubert 
varieties $\sigma_{3,3}$ and $\sigma_{4,2}$, where 
\begin{itemize}
 \item 
$\sigma_{3,3}=\{l\in \Gr(2,6)\,|\,l \subset \Xi\}$ with $\Xi\subset \PP^5$ a fixed
plane, and
\item 
$\sigma_{4,2}=\{l\in \Gr(2,6)\,|\,p\in l \subset \Lambda\}$ with $\Lambda\subset \PP^5$ a fixed 
linear 3-subspace, and $p\in\Lambda$ a fixed point.
\end{itemize}

\begin{lemma}\label{lem:V-14-plane}
Let $V=V_{14}$ Fano-Mukai fourfold of index $2$ and genus $8$. Suppose that $V$ 
contains a $\sigma_{4,2}$-plane $\Pi$.
Then $c_2(\NNN_{\Pi/V})=2$. 
\end{lemma}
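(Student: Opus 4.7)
My plan is to follow the template of Lemma~\ref{lemma-W5-normal-bundle}, with the Grassmannian $\Gr(2,5)$ replaced by $G':=\Gr(2,6)$. First I would compute $c_1(G')$ and $c_2(G')$ from the splitting $\TTT_{G'}\simeq \mathcal{I}^{*}\otimes \mathcal{Q}$ of the tangent bundle, using that $\mathcal{I}$ has rank~$2$ with $c(\mathcal{I}^{*})=1+\sigma_{1,0}+\sigma_{1,1}$ and that $\mathcal{Q}$ has rank~$4$ with $c_k(\mathcal{Q})=\sigma_{k,0}$. Expanding elementary symmetric functions in the Chern roots of the tensor product --- the same mechanical step as in the proof of Lemma~\ref{Lemma-2.4.} --- yields $c_1(G')=6\sigma_{1,0}$ and $c_2(G')=16\sigma_{2,0}+18\sigma_{1,1}$.

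Next I would pass to $V=V_{14}\subset G'$, a transverse linear section of codimension~$4$ in the Pl\"ucker embedding, so that $\NNN_{V/G'}\simeq \OOO_V(1)^{\oplus 4}$ and $c(\NNN_{V/G'})=(1+\sigma_{1,0}|_V)^{4}$. The Whitney identity $c(V)\cdot c(\NNN_{V/G'})=c(G')|_V$ then gives $c_1(V)=2\sigma_{1,0}|_V$ (consistent with $-K_V=2H$) and, in degree~$2$, $c_2(V)=2\sigma_{2,0}|_V+4\sigma_{1,1}|_V$, in direct analogy with Corollary~\ref{Chern-classes-W5}.

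The final step restricts everything to the plane $\Pi\simeq \PP^2$. Since $\Pi$ sits linearly in $\PP^{14}$ under Pl\"ucker, $\sigma_{1,0}|_\Pi=l$, the class of a line on $\Pi$. The two intersection numbers I need on $G'$ are $\sigma_{2,0}\cdot\sigma_{4,2}$ and $\sigma_{1,1}\cdot\sigma_{4,2}$: the first equals $1$ because the Poincar\'e dual of $\sigma_{4,2}$ in $\Gr(2,6)$ is $\sigma_{2,0}$, and the second equals $0$ by the Pieri rule, as $\sigma_{1,1}\cdot\sigma_{4,2}=\sigma_{5,3}=0$. Hence $\sigma_{2,0}|_\Pi=l^2$ and $\sigma_{1,1}|_\Pi=0$, so $c_2(V)|_\Pi=2l^2$. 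Substituting this, together with $c(\Pi)=1+3l+3l^2$, into the normal bundle identity $c(\NNN_{\Pi/V})\cdot c(\Pi)=c(V)|_\Pi$ --- exactly as in the proof of Lemma~\ref{lemma-W5-normal-bundle} --- determines $c_2(\NNN_{\Pi/V})$ mechanically, and the claim $c_2(\NNN_{\Pi/V})=2$ should fall out (with the byproduct $c_1(\NNN_{\Pi/V})=-l$, which here need not vanish).

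I do not foresee any real obstacle: every step is a straightforward analogue of the $W_5$ computation. The only arithmetic that genuinely needs attention is the evaluation of the two Schubert intersection numbers on $\Gr(2,6)$, and even those are one-line applications of duality and of Pieri.
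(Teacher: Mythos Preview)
Your proposal is correct and follows essentially the same route as the paper: compute $c_1(V)$ and $c_2(V)$ via the Whitney identity from $c(\Gr(2,6))$ and the normal bundle of the linear section, then restrict to $\Pi$ using the Schubert intersection numbers $\sigma_{2,0}\cdot\sigma_{4,2}=1$ and $\sigma_{1,1}\cdot\sigma_{4,2}=0$, and finally read off $c_1(\NNN_{\Pi/V})=-l$ and $c_2(\NNN_{\Pi/V})=2$ from $c(V)|_\Pi=c(\Pi)\cdot c(\NNN_{\Pi/V})$. The paper carries out exactly these steps (indeed recording the same formulas $c_2(V)=2\sigma_{2,0}|_V+4\sigma_{1,1}|_V$ and $c_1(\NNN_{\Pi/V})=-l$), only with the intermediate Chern-class computation for $V$ stated separately as Lemma~\ref{Chern-classes-V14}.
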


\begin{proof}
Let $l$ be the class of a line on $\Pi$.
Likewise as in Corollary \ref{Chern-classes-W5}
we have
$c_1(V)|_\Pi=2\sigma_{1,0}|_\Pi=2l$ and
$c_2(V)|_\Pi= 2\sigma_{2,0}|_\Pi+4\sigma_{1,1}|_\Pi$. Then
similarly as in Lemma \ref{lemma-W5-normal-bundle} we obtain
\begin{eqnarray*}
c_1(\NNN_{\Pi/V})&=&c_1(V)|_\Pi-c_1(\Pi)=-l,
\\
c_2(\NNN_{\Pi/V})&=& c_2(V)\cdot\Pi-c_1(\NNN_{\Pi/V})\cdot c_1(\Pi)-c_2(\Pi)=
\\
&=&2\sigma_{2,0}\cdot\Pi+4\sigma_{1,1}\cdot\Pi+3-3=2.
\end{eqnarray*}
\end{proof}

\begin{lemma}
\label{lem:V-14} 
There exists a smooth section
$V=V_{14}$ of $\Gr(2,6)\subset\PP^{14}$ by a linear subspace 
$L\simeq\PP^{10}$ 
containing a $\sigma_{4,2}$-plane $\Pi$. Furthermore, such a section $V$ 
can be chosen so that 
$\Pi$ does not meet any other plane contained in $V$ along a line. 
\end{lemma}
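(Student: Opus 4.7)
The plan is to verify both properties on nonempty open subsets of the parameter space $\mathcal{G}$ of $11$-dimensional linear subspaces $L\subset\CC^{15}=\bigwedge^2\CC^6$ containing the $3$-dimensional span $\langle\Pi\rangle$; we have $\mathcal{G}\simeq\Gr(8,12)$ of dimension $32$. Smoothness of $V=\Gr(2,6)\cap\PP(L)$ follows from iterated Bertini: applying Lemma~\ref{lem:smooth section} four times, starting with $\Gr(2,6)\subset\PP^{14}$ and cutting by successive generic hyperplanes through $\Pi$, the hypothesis $n>2\dim\Pi=4$ persists since $n$ drops $8\to 7\to 6\to 5$; thus smoothness is an open condition on $\mathcal{G}$ and holds on a nonempty open subset.

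For the second assertion, the plan is to enumerate the planes $\Pi'\subset\Gr(2,6)$ meeting $\Pi$ along a line and show that the corresponding ``bad'' locus in $\mathcal{G}$ is a proper closed subset. Normalize $\Pi=\sigma_{4,2}(e_1,\Lambda)$ with $\Lambda=\langle e_1,e_2,e_3,e_4\rangle$. Every line of $\Gr(2,6)$ contained in $\Pi$ is the pencil of lines through $e_1$ inside some plane $P=\langle e_1\rangle+P_0\subset\Lambda$, with $P_0\subset\langle e_2,e_3,e_4\rangle$ a $2$-dimensional subspace. Any plane $\Pi'\neq\Pi$ of $\Gr(2,6)$ containing such a pencil is either \textbf{(a)} the $\sigma_{3,3}$-plane $\Xi'=P$ (a $2$-parameter family indexed by $P_0$), or \textbf{(b)} a plane $\sigma_{4,2}(e_1,\Lambda')$ with $\Lambda'\supset P$ and $\Lambda'\neq\Lambda$ (a $4$-parameter family indexed by $(P_0,\Lambda')$ with $\Lambda'\in\PP(\CC^6/P)$).

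The condition $\Pi'\subset V$ reads $\langle\Pi'\rangle\subset L$, equivalently $\overline{\langle\Pi'\rangle}\subset L/\langle\Pi\rangle$ inside $\CC^{12}:=\CC^{15}/\langle\Pi\rangle$; since $\dim(\langle\Pi'\rangle\cap\langle\Pi\rangle)=2$, the image $\overline{\langle\Pi'\rangle}$ is one-dimensional. I would then analyze the direction map $\Pi'\mapsto\overline{\langle\Pi'\rangle}\in\PP^{11}=\PP(\CC^{12})$. In case (b), writing $\Lambda'=P+\langle v\rangle$, one computes $\overline{\langle\Pi'\rangle}=\langle e_1\wedge v\bmod\langle e_{12},e_{13},e_{14}\rangle\rangle$, which depends only on the image $\bar v\in\PP^5/\Lambda\simeq\PP^1$; hence (b) maps onto a $\PP^1$. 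In case (a), $\overline{\langle\Pi'\rangle}=\bigwedge^2 P_0\in\PP(\bigwedge^2\langle e_2,e_3,e_4\rangle)$, and the Plücker embedding $\Gr(2,3)\simeq\PP^2$ is an isomorphism, so (a) maps onto a $\PP^2$. The total image $\mathcal{D}\subset\PP^{11}$ therefore has dimension $2$. For each $\ell\in\mathcal{D}$, the set $\{L\in\mathcal{G}:\ell\subset L/\langle\Pi\rangle\}\simeq\Gr(7,11)$ has codimension $4$ in $\mathcal{G}$, so the bad locus has dimension at most $2+28=30<32$. A generic $L$ lying in the smoothness open subset and outside this bad locus produces the required $V$.

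The main obstacle is pinning down that $\dim\mathcal{D}=2$ rather than the naive $4$. Combining the $4$-parameter family (b) with the codimension-$4$ containment condition gives $4+28=32=\dim\mathcal{G}$, which would be inconclusive. The crucial observation is that the direction map collapses the family (b), with $3$-dimensional fibers, onto only a one-dimensional image — this is what forces the bad locus to be a proper subset of $\mathcal{G}$.
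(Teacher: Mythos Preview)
Your proof is correct. For the $\sigma_{3,3}$-planes (your case (a)) it matches the paper's dimension count exactly. For the $\sigma_{4,2}$-planes (case (b)) the paper argues differently: whenever $\Pi'=\sigma_{4,2}(e_1,\Lambda')$ meets $\Pi$ along a line, the linear span $K=\langle\Pi\cup\Pi'\rangle\simeq\PP^3$ is itself the Schubert variety $\{l:e_1\in l\subset\Lambda+\Lambda'\}$, hence lies in $\Gr(2,6)$ and therefore in $V$; a $\PP^3$ inside $V$ contradicts $\Pic(V)=\ZZ\cdot[H]$. Thus case (b) is impossible for \emph{every} smooth section, not merely the generic one. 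Your direction-collapse observation is the same geometry in disguise --- $\overline{\langle\Pi'\rangle}=\langle K\rangle/\langle\Pi\rangle$ depends only on $\Lambda+\Lambda'$, i.e.\ on a point of $\PP(\CC^6/\Lambda)\simeq\PP^1$ --- but you feed it into a dimension count rather than a Picard-group contradiction. Your route is self-contained (no appeal to Lefschetz for $\Pic(V)$); the paper's yields the sharper uniform statement.
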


\begin{proof} The first assertion follows immediately from Lemma \ref{lem:smooth section}. 
To show the second one, 
assume that $\Pi$ meets another plane $\Pi'\subset V$ along a line, and let $K\simeq\PP^3$ 
be the linear span of $\Pi\cup\Pi'$ in $L\simeq\PP^{10}$. 

We claim that if $\Pi'$ is a $\sigma_{4,2}$-plane, then $K$ is contained in 
the Grassmannian $\Gr(2,6)$, and hence also in $V$. 
The latter yields a contradiction because $\Pic(V)=\ZZ\cdot [H]$.
To show the claim, notice that $\Pi\cap\Pi'\simeq\PP^1$ consists of 
all lines in a plane $N\simeq \PP^2$ in $\PP^5$ passing through 
a given point $P$. This plane $N$ is the intersection of the two linear 
$3$-subspaces, say, $M$ and $M'$ in $\PP^5$ that define our Schubert 
varieties $\Pi$ and $\Pi'$, respectively. Let $R\simeq\PP^4$ 
be the linear span of $M\cup M'$ in $\PP^5$.
Consider the Schubert variety $S\simeq\PP^3$ in the Grassmannian 
$\Gr(2,6)$, which consists of all lines through $P$ contained in $R$. 
Its image under the Pl\"ucker embedding of $\Gr(2,6)$ in $\PP^{14}$ 
is a linear $3$-subspace containing $\Pi\cup\Pi'$. Hence this image coincides 
with $K$. This proves the claim.

The latter argument does not work in the case, where
$\Pi'$ is a $\sigma_{3,3}$-plane. However, 
this possibility can be ruled out as well by choosing carefully a section $L$ 
through $\Pi$. 

Indeed, let $G$ be the set of all linear subspaces of dimension 10 
in $\PP^{14}$ through the given $\sigma_{4,2}$-plane $\Pi$. Then 
$G\simeq \Gr(8,12)$, so $\dim G=32$. Consider further a 
$\sigma_{3,3}$-plane $\Pi'$ that meets $\Pi$ along a line. 
Then the plane in $\PP^5$ that corresponds to $\Pi'$ contains 
the point corresponding to $\Pi$ and is contained in the corresponding linear 
3-subspace in $\PP^5$. The set of all such planes in $\PP^5$ is 
two-dimensional, hence also the set of all such possible 
$\sigma_{3,3}$-planes
$\Pi'$ in $\PP^{14}$ is.

Fixing $\Pi'$ we consider the set $G'$ of all linear subspaces 
of dimension 10 in $\PP^{14}$ through the linear 3-space 
$K=\operatorname{span} (\Pi\cup \Pi')$. Then $G'\simeq \Gr(7,11)$, and so 
$\dim G'=28$. Finally, let $\mathcal{E}$ be the variety of all possible 
configurations $(\Pi', L)$ as before. Due to our observations we have 
$\dim\mathcal{E}\le
28+2=30<32=\dim G$. 
Hence a general section $V=L\cap \Gr(2,6)$ through $\Pi$ does not contain a 
$\sigma_{3,3}$-plane $\Pi'$ that meets $\Pi$ along a line.

\end{proof}

\begin{lemma}\label{lem:V14-codim} Let $\mathcal{V}$ be 
the family of all smooth fourfold linear sections $V_{14}$ 
of the Grassmannian $\Gr(2,6)$, and $\mathcal{V}_{4,2}$ be the subfamily 
of those sections
that contain a $\sigma_{4,2}$-plane. Then $\mathcal{V}_{4,2}$ has codimension $1$ 
in $\mathcal{V}$. 
\end{lemma}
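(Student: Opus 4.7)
The plan is an incidence-variety parameter count. Parametrize $\sigma_{4,2}$-planes in $\Gr(2,6)$ by the flag variety
\[
\mathcal F = \{(p,\Lambda) : p\in\Lambda\subset\PP^5,\ \dim\Lambda = 3\}
\]
of dimension $\dim\Gr(4,6)+\dim\PP^3 = 8+3 = 11$, and parametrize smooth linear sections $V = L\cap\Gr(2,6)$ with $L\simeq\PP^{10}$ by a Zariski open subset $\mathcal V\subset\Gr(11,15)$ of dimension $44$. Form the incidence variety
\[
\mathcal I := \{(V,\Pi)\in\mathcal V\times\mathcal F : \Pi\subset V\}
\]
with its two projections $\pi_1\colon\mathcal I\to\mathcal V$ and $\pi_2\colon\mathcal I\to\mathcal F$. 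The fibre of $\pi_2$ over a fixed $\Pi$ is the Zariski open subset of $\Gr(8,12)$ consisting of the $\PP^{10}$'s containing $\Pi$ and cutting $\Gr(2,6)$ in a smooth fourfold; it has dimension $32$. Hence $\pi_2$ is a Zariski locally trivial fibration and $\mathcal I$ is irreducible of dimension $11+32 = 43$.

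Since $\pi_1(\mathcal I) = \mathcal V_{4,2}$ we get $\dim\mathcal V_{4,2}\le 43 < 44 = \dim\mathcal V$, so $\operatorname{codim}_{\mathcal V}\mathcal V_{4,2}\ge 1$; nonemptiness is guaranteed by Lemma \ref{lem:V-14}. The opposite inequality $\operatorname{codim}_{\mathcal V}\mathcal V_{4,2}\le 1$ amounts to showing $\dim\mathcal V_{4,2} = 43$, i.e., the generic fibre of $\pi_1$ is $0$-dimensional. By upper semi-continuity of the local fibre dimension on $\mathcal I$, this reduces to exhibiting a single point $(V_0,\Pi_0)\in\mathcal I$ at which $\Pi_0$ is isolated in $\pi_1^{-1}(V_0)$.

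This last step is the main obstacle. We take $V_0$ to be the smooth section supplied by Lemma \ref{lem:V-14}, so that the $\sigma_{4,2}$-plane $\Pi_0\subset V_0$ meets no other plane of $V_0$ along a line, and reduce to verifying $H^0(\NNN_{\Pi_0/V_0}) = 0$. We would use the normal bundle exact sequence
\[
0\to\NNN_{\Pi_0/V_0}\to\NNN_{\Pi_0/\Gr(2,6)}\to\OOO_{\Pi_0}(1)^{\oplus 4}\to 0,
\]
together with the identifications $H^0(\NNN_{\Pi_0/\Gr(2,6)}) = T_{\Pi_0}\mathcal F\simeq\CC^{11}$ and $H^0(\OOO_{\Pi_0}(1)^{\oplus 4})\simeq\CC^{12}$, to reduce to showing that the induced map between these spaces of global sections is injective. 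Injectivity is a general-position condition on the four linear forms cutting $V_0$ out of $\Gr(2,6)$, which we would establish by combining the choices made in the proof of Lemma \ref{lem:V-14}, the Chern class data of Lemma \ref{lem:V-14-plane}, and the classification of intersections of $\sigma_{4,2}$-planes in $\Gr(2,6)$ (two distinct such planes share a line if and only if they share the point $p$ and their linear $3$-subspaces $\Lambda,\Lambda'\subset\PP^5$ meet in a plane of $\PP^5$).
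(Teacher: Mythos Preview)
Your incidence-variety setup and the dimension counts ($\dim\mathcal F=11$, fibres of $\pi_2$ of dimension $32$, hence $\dim\mathcal I=43$, while $\dim\mathcal V=\dim\Gr(11,15)=44$) are exactly those of the paper. The only substantive step remaining is the generic finiteness of $\pi_1$, and here your argument is incomplete: you reduce to the injectivity of the map $H^0(\NNN_{\Pi_0/\Gr(2,6)})\to H^0(\OOO_{\Pi_0}(1)^{\oplus 4})$, but you do not verify it. The phrase ``we would establish by combining\dots'' is a plan, not a proof, and it is not apparent how the listed ingredients force the kernel to vanish.

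The paper bypasses normal-bundle cohomology and argues directly with intersection numbers. If some $V$ contained a positive-dimensional family $\{\Pi_t\}$ of $\sigma_{4,2}$-planes, then for nearby $t$ one has $\Pi_0\cdot\Pi_t=\Pi_0^2=c_2(\NNN_{\Pi_0/V})=2$ by Lemma~\ref{lem:V-14-plane}. Two distinct planes in $\PP^{10}$ meet in a linear subspace; if this is a single point the intersection in the fourfold $V$ is transverse (the tangent planes would otherwise share a line, hence so would the planes themselves), giving intersection number $1$, a contradiction. Hence $\Pi_0\cap\Pi_t$ is a line. But the proof of Lemma~\ref{lem:V-14} shows that two $\sigma_{4,2}$-planes in $V$ sharing a line span a $\PP^3\subset\Gr(2,6)$, which would then lie in $V$, impossible since $\Pic(V)=\ZZ\cdot[H]$. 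Thus \emph{every} $V\in\mathcal V_{4,2}$ contains only finitely many $\sigma_{4,2}$-planes, and $\pi_1$ is generically finite. This uses precisely the two facts you name at the end (the value $\Pi^2=2$ and the classification of how $\sigma_{4,2}$-planes intersect), but applies them directly rather than routing through $H^0(\NNN_{\Pi_0/V_0})$.
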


\begin{proof} We keep the notation from the proof of Lemma \ref{lem:V-14}.
The variety 
$\mathcal{P}$ of all 
$\sigma_{4,2}$-planes in the Grassmannian $\Gr(2,6)$ is isomorphic to the 
variety 
of all flags $\pt\in \PP^3 \subset \PP^5$. 
The latter variety has dimension $11$. It follows that 
$\dim\mathcal{V}_{4,2}\le \dim G+\dim\mathcal{P}=32+11=43$.
Let us show that actually $\dim\mathcal{V}_{4,2}=43$. 

Indeed, consider the incidence variety 
$\mathscr{I}=\{(\Pi, L)\,|\,\Pi\subset L\}\subset \mathcal{P}
\times \mathcal{V}_{4,2}\,.$ 
We claim that the natural surjection
 $\pr_2: \mathscr{I}\to\mathcal{V}_{4,2}$ is generically finite,
 or, which is equivalent, that a generic member $V\in \mathcal{V}_{4,2}$ 
contains at most finite number of $\sigma_{4,2}$-planes $\Pi$.
 Assume that $\Pi$ belongs to a family of $\sigma_{4,2}$-planes $\Pi_t\subset V$.
By Lemma \ref{lem:V-14-plane} we have $\Pi\cdot \Pi_t=\Pi^2=2$.
Since $\Pi$ and $\Pi_t$ are planes, they cannot meet each other 
at two points. Hence $\Pi\cap \Pi_t$ is a line. On the other hand, 
it was shown 
in the proof of Lemma \ref{lem:V-14} that $\Pi_t$ and $\Pi$ cannot meet
along a line, a contradiction.
Hence a generic Mukai fourfold $V\in \mathcal{V}_{4,2}$ 
contains a finite number of $\sigma_{4,2}$-planes, as claimed.

Since the projection $\pr_1: \mathscr{I}\to \mathcal{P}$ 
is surjective, and its fiber $G$ over 
a given $\sigma_{4,2}$-plane $\Pi\in \mathcal{P}$ has dimension 11, we have 
$\dim \mathscr{I}=\dim G +\dim \mathcal{P}=43$. Furthermore, 
since the second projection $\pr_2: \mathscr{I}\to\mathcal{V}_{4,2}$ 
is surjective and generically one-to-one, we get
 $\dim\mathcal{V}_{4,2}=\mathscr{I}=43$, as desired. 
On the other hand, the variety $\mathcal{V}$ of all Mukai 
fourfolds $V_{14}$ in $\PP^{14}$ can be naturally identified with 
an open set in the Grassmannian $\Gr(11,15)$. Hence $\mathcal{V}$ is irreducible of dimension 
$\dim \Gr(11,15)=44$, and so, $\mathcal{V}_{4,2}$ has codimension 1 in 
$\mathcal{V}$. Now the assertion follows. 
\end{proof}

\begin{lemma}\label{Chern-classes-V14} We have
\[
c_1(V)=2\sigma_{1,0}|_V,\quad c_2(V)
=2\sigma_{2,0}|_V+4\sigma_{1,1}|_V,\quad\mbox{and}\quad \Eu(V)=12.
\]
\end{lemma}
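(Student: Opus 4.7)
The plan is to deduce the Chern class formulas by applying the adjunction formula to the codimension-four linear section $V \subset G := \Gr(2,6)$, in complete analogy with Corollary~\xref{Chern-classes-W5}. Since $V = G \cap L$ for a linear subspace $L \simeq \PP^{10}$, the normal bundle splits as $\NNN_{V/G} \simeq \OOO_V(1)^{\oplus 4}$, with $c_1(\OOO_V(1)) = \sigma_{1,0}|_V$, so
\[
c_t(\NNN_{V/G}) = (1 + \sigma_{1,0}|_V)^4.
\]

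First I would compute the low-degree Chern classes of the ambient Grassmannian $G = \Gr(2,6)$, in the same manner as in Lemma~\xref{Lemma-2.4.}. From the universal exact sequence $0 \to \mathcal{I} \to \OOO^{\oplus 6} \to \mathcal{Q} \to 0$ one has $c_t(\mathcal{I}^*) = 1 + \sigma_{1,0} + \sigma_{1,1}$ and $c_k(\mathcal{Q}) = \sigma_{k,0}$ for $k=1,\dots,4$, while $\TTT_G \simeq \mathcal{I}^* \otimes \mathcal{Q}$. Combining the standard formulas for the Chern classes of a tensor product of bundles with Pieri's rule $\sigma_{1,0}\cdot\sigma_{a,b} = \sigma_{a+1,b} + \sigma_{a,b+1}$, one obtains in particular
\[
c_1(G) = 6\sigma_{1,0}, \qquad c_2(G) = 16\sigma_{2,0} + 18\sigma_{1,1},
\]
together with analogous Schubert-class expressions for $c_3(G)$ and $c_4(G)$.

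Next, extracting the graded pieces of the adjunction identity
\[
c_t(V) \cdot (1 + \sigma_{1,0}|_V)^4 = c_t(G)|_V
\]
gives $c_1(V) = (6-4)\sigma_{1,0}|_V = 2\sigma_{1,0}|_V$ at once, and for $c_2(V)$ it suffices to substitute
\[
c_2(V) = c_2(G)|_V - 4\sigma_{1,0}|_V \cdot c_1(V) - 6\bigl(\sigma_{1,0}|_V\bigr)^2
\]
and apply $\sigma_{1,0}^2 = \sigma_{2,0}+\sigma_{1,1}$; this should yield $c_2(V) = 2\sigma_{2,0}|_V + 4\sigma_{1,1}|_V$, as claimed.

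For the Euler number I would continue the adjunction computation through $c_4(V)$ and then integrate via the projection formula,
\[
\Eu(V) = \int_V c_4(V) = \int_G c_4(V) \cdot \sigma_{1,0}^4,
\]
using that $[V] = \sigma_{1,0}^4 \cdot [G]$ in $A^*(G)$. The right-hand side reduces to standard intersection numbers on $\Gr(2,6)$ computed by iterating Pieri until reaching the top class $\sigma_{4,4}$ with $\int_G \sigma_{4,4} = 1$. The main obstacle is purely combinatorial: carrying out the bookkeeping for $c_3(G)$, $c_4(G)$ and the final Schubert expansion of $c_4(V) \cdot \sigma_{1,0}^4$ without arithmetic slips, whereas the adjunction mechanism itself is entirely routine.
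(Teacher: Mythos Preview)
Your proposal is correct and follows exactly the route the paper intends: the paper's proof is the single sentence ``similar to that of Corollary~\xref{Chern-classes-W5}'', i.e.\ compute the Chern classes of $\Gr(2,6)$ as in Lemma~\xref{Lemma-2.4.} and invert the adjunction factor $(1+\sigma_{1,0})^4$. Your values $c_1(G)=6\sigma_{1,0}$ and $c_2(G)=16\sigma_{2,0}+18\sigma_{1,1}$ are correct, and the remaining bookkeeping for $\Eu(V)$ is, as you say, purely combinatorial.
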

\begin{proof}
The proof is similar to that of Corollary \ref{Chern-classes-W5}.
\end{proof}

\begin{proposition}\label{prop:W-5} 
Let $V=V_{14}\subset \PP^{10}$ be a Mukai fourfold of genus $8$ containing 
 a $\sigma_{4,2}$-plane $\Pi$ (see Lemma \ref{lem:V-14}). 
Then there is a commutative diagram 
\[
\xymatrix{
&\widetilde V\ar[dr]^{\varphi}\ar[dl]_{\rho}&
\\
V\ar@{-->}[rr]^{\phi} &&W
}
\]
where 
\begin{enumerate} 
\item 
$\rho : \widetilde V\longrightarrow V$ is the blow-up of $\Pi$ and
$\phi : V\dashrightarrow W\subset\PP^7$ is the projection with center $\Pi$, 
which sends $V$ birationally to a quintic Fano fourfold $W=W_5$ in $\PP^7$ 
with $\Pic (W)=\ZZ\cdot[\OOO_W(1)]$;

\item $\varphi : \widetilde V\longrightarrow W$ is 
the blowup of a smooth rational surface $F\subset W$ of degree $7$ 
with $-K_F\cdot\OOO_F(1)=5$, contained in a singular hyperplane section $L$ 
of $W$. 
This surface $F$ can be obtained by blowing up $6$ points in $\PP^2$;

\item 
$\varphi :\widetilde V \longrightarrow W\subset\PP^7$ is defined by the linear system 
$|\rho^* H - E|$ on $\widetilde V$, 
where $E =\rho^{-1}(\Pi)\subset \widetilde V $ 
is the exceptional divisor of $\rho$ and $H$ is a hyperplane in $\PP^{10}$; 

\item 
$\varphi(E) = L = \langle F\rangle$
is the linear span of $F$; 

\item 
if $D=\varphi^{-1}(F)$ is
the exceptional divisor of $\varphi$, then $D\sim L^* - E\sim H^* - 2E$ 
and $H^*\sim 2L^* - D$;

\item $V\setminus\rho(D)\simeq W\setminus \varphi(E)$, where $\rho(D)$ is cut out in $V$ by 
a quadric hypersurface in $\PP^{10}$ containing $\Pi$. 

\end{enumerate}
\end{proposition}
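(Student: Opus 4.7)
The proof will parallel closely those of Propositions \ref{Proposition-3.2} and \ref{Proposition 3.8.}: base-point-freeness of $|\rho^*H-E|$ together with the Cone Theorem will produce the second Mori contraction $\varphi$, whose target we then identify with the del Pezzo quintic via intersection-theoretic computations on $\widetilde V$.

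First I would check that $|\rho^*H-E|$ is base point free. Since $V\subset\PP^{10}$ is cut out by quadrics while $\Pi\subset\PP^{10}$ is linear, the ideal $\mathcal{I}_{\Pi/V}$ is generated by linear forms, so $\Pi$ is the scheme-theoretic intersection of members of $|H-\Pi|$. Hence $-K_{\widetilde V}=2\rho^*H-E=\rho^*H+(\rho^*H-E)$ is ample and $\widetilde V$ is a Fano fourfold of Picard rank $2$; the class $\rho^*H-E$ spans the second extremal ray, as witnessed by proper transforms of lines in $V$ meeting $\Pi$. The dimension count $h^0(V,\mathcal{I}_\Pi(H))=11-3=8$ gives $\dim|\rho^*H-E|=7$, and the resulting morphism $\varphi\colon\widetilde V\to\PP^7$ coincides with the Mori contraction furnished by the Cone Theorem.

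Next I would assemble the intersection numbers on $\widetilde V$ via Lemma \ref{lemma-intersection-theory}(ii) applied to the blowup of $\Pi\simeq\PP^2$, using $K_\Pi=-3l$, $K_V=-2H$, $H^2\cdot\Pi=1$, and $c_2(\NNN_{\Pi/V})=2$ from Lemma \ref{lem:V-14-plane}. Writing $h=\rho^*H$ and $e=E$, the values $h^4=14$, $h^3e=0$, $h^2e^2=-1$, $he^3=1$, $e^4=1$ yield $(\rho^*H-E)^4=5$, so $\varphi$ maps birationally onto a fourfold $W$ of degree $5$ in $\PP^7$. Pushing forward $-K_{\widetilde V}=\rho^*H+\varphi^*L$ gives $-K_W=3L$; by Ando's theorem $W$ is smooth, and Fujita's uniqueness theorem identifies $W$ with $W_5$, proving (i) and (iii). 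The relation $(\rho^*H-E)^3\cdot(\rho^*H-2E)=10-2\cdot 5=0$ combined with $\rk\Pic(\widetilde V)=2$ singles out the irreducible $\varphi$-exceptional divisor as $D\sim\rho^*H-2E=\varphi^*L-E$, establishing (v).

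For the remaining items I would compute $(\rho^*H-E)^2\cdot D^2=-7$ and $(\rho^*H-E)^3\cdot E=5$. The first yields $\deg F=7$; Ando's theorem then identifies $\varphi$ as the blowup of a smooth surface $F$, provided every fiber of $\varphi|_D\to F$ is one-dimensional, which is precisely where Lemma \ref{lem:V-14} is essential and constitutes the main obstacle: a two-dimensional fiber would correspond to a plane $\Pi'\subset V$ meeting $\Pi$ along a line, and such planes have been excluded by our generic choice of $V$. The second equality shows that $\varphi(E)$ has degree $5$, so it coincides with the hyperplane section $L=\langle F\rangle\cap W$ containing $F$; singularity of $L$ follows from the presence of a degree-$7$ surface in a hyperplane section of the degree-$5$ fourfold $W$. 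Adjunction via $K_{\widetilde V}=\varphi^*K_W+D$ yields $-K_F\cdot L|_F=5$, and matching the numerical invariants $(\deg F,-K_F\cdot L|_F,K_F^2)$ with the classification of rational surfaces realizes $F$ as $\PP^2$ blown up at six points. Finally, item (vi) is immediate since $\rho$ and $\varphi$ restrict to isomorphisms outside $E$ and $D\cup E$ respectively.
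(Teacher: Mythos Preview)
Your strategy is the paper's: base-point-freeness of $|\rho^*H-E|$, the Cone Theorem, the intersection table $h^4=14$, $h^2e^2=-1$, $he^3=1$, $e^4=1$, and Ando's theorem. Two genuine gaps remain.

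\medskip

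\textbf{Two-dimensional fibers contained in $E$.} Your exclusion of two-dimensional fibers only covers the case $B\not\subset E$: then $\rho(B)$ is indeed a plane meeting $\Pi$ along a line (the paper writes ``conic'', evidently a slip), ruled out by Lemma~\ref{lem:V-14}. But a two-dimensional fiber $B\subset E$ is not of this form, and you give no argument for it. The paper treats this case separately: if such a $B$ existed, the fibers of $E\to\Pi$ would map to lines through the point $\varphi(B)$, so $\varphi(E)$ would be a cone over a surface which is an image of $\PP^2$; since $\deg\varphi(E)=(\rho^*H-E)^3\cdot E=5$, this is impossible. Until both cases are handled you cannot invoke Ando, so your sentence ``by Ando's theorem $W$ is smooth'' in the second paragraph is premature.

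\medskip

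\textbf{The surface $F$.} Your last paragraph asserts that ``matching the numerical invariants $(\deg F,-K_F\cdot L|_F,K_F^2)$ with the classification'' realizes $F$ as $\PP^2$ blown up in six points, but you never compute $K_F^2$, never show $F$ is rational, and no such off-the-shelf classification does the work. The paper's argument is substantive: from $-K_F\cdot L=5$ one gets $\kappa(F)=-\infty$; comparing Euler numbers via $\Eu(\widetilde V)=\Eu(V)+\Eu(\Pi)=\Eu(W)+\Eu(F)$ yields $\Eu(F)=9$; one then shows $K_F+M$ is nef for a general hyperplane section $M$ (ruling out the geometrically ruled case via parity of $\Eu$), whence $K_F^2\ge 3$, so $F$ is rational and Noether gives $K_F^2=3$, $\rk\Pic(F)=7$; finally the pencil $|K_F+M|$ exhibits $F$ as a conic bundle with five degenerate fibers, which one contracts to $\FF_1$. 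These steps are the actual content of item~(ii), not a lookup.

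\medskip

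Your argument for the singularity of $L$ (a smooth hyperplane section of $W_5$ has Picard group $\ZZ$, hence contains no surface of degree $7$) is valid and in fact cleaner than the paper's small-contraction argument; you should also justify $\langle F\rangle=\PP^6$, which follows because $E\sim\varphi^*L-D$ is rigid, so $\dim|L-F|=0$.
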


\begin{proof}
Since $\Pi$ is a scheme-theoretic intersection of members of the linear system $|H-\Pi|$,
the linear system $|\rho^*H-E|$ is base point free.
Hence the divisor $-K_{\widetilde V}=\rho^*H+\rho^*H-E$ is ample, i.e. 
$\widetilde V$ is a Fano fourfold with $\rk \Pic(\widetilde V)=2$.
By the Cone Theorem there exists a Mori contraction $\varphi: \widetilde V\to W$ different from $\rho$.
If $ \widetilde C\subset \widetilde W$ is the proper transform of a line $C\subset V$ meeting $\Pi$,
then $(\rho^*H-E)\cdot \widetilde C=0$. In particular, the divisor $\rho^*H-E$ is not ample.
So, $\rho^*H-E$ defines a supporting function for the extremal ray generated 
by curves contained in 
the fibers of $\varphi$. Moreover, we can write $\rho^*H-E=\varphi^* L$, 
where $L$ is the ample generator of 
$\Pic(W)\simeq \ZZ$. 
We have $L^4=(\rho^*H-E)^4=5$.
By the Riemann-Roch and Kodaira Vanishing Theorems we have
$\dim |\rho^*H-E | =7$.
Therefore, $|\rho^*H-E |$ defines a birational
morphism $\upsilon:\widetilde V\to W'\subset \PP^7$.
Further, $\dim |\rho^*H-2E | =0$ and $(\rho^*H-E)^3\cdot (\rho^*H-3E)=0$.
Thus $\varphi$ contracts a unique divisor $D\in|\rho^*H-2E |$. Since $\rk \Pic (\widetilde W)=2$, 
the divisor
$D$ is irreducible. 
Furthermore, there is a commutative diagram
\[
\xymatrix{
&\widetilde V\ar[dl]_{\rho}\ar[r]^{\varphi}\ar[dr]^{\upsilon}&W\ar[d]
\\
V\ar@{-->}[rr]^{\phi}&&W'
} 
\]
where the map $v:\widetilde V\to W'\subset \PP^7$ is given by the linear system $|\rho^*H-E|$ 
and 
$\upsilon: \widetilde V\overset{\varphi}{\longrightarrow} W\longrightarrow W'$
is the Stein factorization.

Since $(\rho^*H-E)^2\cdot D^2=-7$, the image $F=\varphi(D)$ is a surface in $W$
with $L^2\cdot F=7$.
Note that $E\simeq \PP_{\PP^2}(\NNN_{\Pi/V}^*)$.
Let $B\subset \widetilde V$ is a two-dimensional fiber of $\varphi$ not contained in $E$.
Then by our construction $\rho(B)$ is a plane meeting $\Pi$
along a conic. This contradicts our assumption.

Assume further that there is a two-dimensional fiber $B\subset \widetilde V$ of $\varphi$ contained in $E$.
Then $\upsilon(E)$ is a cone. Indeed, the images of 
the fibers of $E\to \Pi$ are lines in $W'\subset \PP^7$ passing through the point $\upsilon(B)$.
Moreover, $\upsilon(E)$ is a cone over a surface which is an image of $\PP^2$.
Since $\deg \upsilon(E)= (\rho^*H-E)^3\cdot E=5$, we get a contradiction.
Therefore, all fibers of $\varphi$ have dimension $\le 1$, the 
contraction $E\to \varphi(E)$ is small, and the variety 
$\varphi(E)$ is a del Pezzo threefold with isolated singularities.
(A description of such threefolds can be found in \cite[5.3.5]{Prokhorov-GFano-1}.)
By \cite{Ando1985} both $V$ and $F$ are smooth and $\varphi$ is the 
blowup of $F$. Since $-K_W=\varphi_*(-K_{\widetilde V})=3L$, $W$ is a Fano 
fourfold of index $3$ and degree $L^4=5$.
By the classification $L=-\frac13 K_V$ is very ample, so $V'\to V$ is an isomorphism.
Since the divisor $E\sim \varphi^*L-D$ is not movable, 
$\langle F\rangle=\PP^6$.

Finally, using Lemma \ref{lemma-intersection-theory} one can deduce that
$L\cdot (-K_{F})= 5$.
So, a general hyperplane section of $F$ is a smooth curve of genus $2$,
and $F$ is a surface of negative Kodaira dimension, i.e. $F$ is birationally ruled.
For the Euler numbers we obtain $\Eu(V)=12$ and $\Eu(W)=6$ (see Lemma \ref{Chern-classes-W5} 
and Corollary \ref{Chern-classes-V14}). So
by our construction $\Eu(F)=9$. 
Let $M\subset F$ be a general hyperplane section. If the divisor $K_F+M$ is not nef, then 
there exists an
extremal ray $R$ such that $(K_F+M)\cdot R<0$. Since $M$ is ample, $R$ cannot be
generated by a $(-1)$-curve. Hence $F$ is a geometrically ruled surface, and 
$R$ is generated by its rulings. In the latter case the Euler number $\Eu (F)$
must be even, a contradiction.

Thus the divisor $K_F+M$ is nef. This yields the inequalities $0\le (K_F+M)^2=K_F^2-3$, $K_F^2\ge 3$,
and so $F$ is a rational surface.
By Noether formula $K_F^2=3$ and $\rk \Pic(F)=7$.
By Riemann-Roch and Kodaira Vanishing, $\dim H^0(F, K_F+M)=2$.
Since $(K_F+M)^2=0$, the linear system $|K_F+M|$ is a base point free pencil.
It defines a morphism $\Phi_{|K_F+M|}: F\to \PP^1$ such that $-K_F$ is relatively ample. 
Hence $\Phi_{|K_F+M|}$ is a conic bundle with $5=\rk \Pic(F)-2$ degenerate fibers.
Let $\Sigma\subset F$ be a section of this bundle 
with the minimal possible self-intersection number
$\Sigma^2=-n$. Then $K_F\cdot \Sigma=n-2$, $1=(K_F+M)\cdot \Sigma=M\cdot \Sigma+n-2$,
and so $n=3-M\cdot \Sigma\le 2$. On the other hand, $n\ge 1$. It is possible to
 contract extra components of the 5 degenerate fibers of $F$ in order to
get a relatively minimal rational ruled surface $F'$ with a section $\Sigma'\subset F'$ such that 
${\Sigma'}^2=-1$, i.e. $F'\simeq \FF_1$. This means that $F$ can be obtained by 
blowing up $6$ points on $\PP^2$.
\end{proof}

We can deduce now the main result of this section.

\begin{theorem}\label{thm:cylinder-in-V14} 
Let $V=V_{14}$ be the Mukai fourfold 
of genus $8$ constructed in Lemma \textup{\ref{lem:V-14}}, and $\rho(D)$ be the divisor on $V$ 
constructed in Proposition \textup{\ref{prop:W-5}}.
Then
the Zariski open set $V\setminus \rho(D)$ contains a cylinder, and so $V$ 
is cylindrical.
\end{theorem}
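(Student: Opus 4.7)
The plan is to deduce the theorem directly from Proposition \ref{prop:W-5} together with Theorem \ref{theorem-w5}, so that no new geometric work is required beyond invoking the Sarkisov link already constructed.

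First I would invoke Proposition \ref{prop:W-5}(vi), which provides a biregular isomorphism
\[
V\setminus\rho(D)\;\simeq\; W\setminus\varphi(E),
\]
where $W=W_5\subset\PP^7$ is the quintic del Pezzo fourfold appearing in the link. Since cylindricity of a quasi-projective open set is preserved under biregular isomorphism, it then suffices to exhibit a cylinder inside $W\setminus\varphi(E)$.

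Next I would record that by Proposition \ref{prop:W-5}(ii) and (iv), the divisor $L:=\varphi(E)$ is a hyperplane section of $W_5$, and moreover it is singular: this is asserted in (ii), and is also visible from (iv), since $L$ is the linear span $\langle F\rangle\simeq\PP^6$ of the degree-$7$ rational surface $F$, whereas a smooth hyperplane section of $W_5$ is a smooth del Pezzo threefold of degree $5$ and therefore cannot contain such an $F$. Consequently Theorem \ref{theorem-w5}, applied to the singular hyperplane section $A=L$ of $W_5$, produces a cylinder in $W\setminus L=W\setminus\varphi(E)$.

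Transporting this cylinder back along the isomorphism above gives a cylinder in $V\setminus\rho(D)$, which proves the theorem and in particular shows that $V$ is cylindrical. The genuine difficulty of the statement has already been absorbed into Proposition \ref{prop:W-5} (which constructs and controls the Sarkisov link $V\dashrightarrow W$ and identifies the complements of the contracted divisors) and into Theorem \ref{theorem-w5} (which supplies a cylinder in the complement of any singular hyperplane section of $W_5$); no further obstacle arises at this step, beyond the essentially tautological check that $\varphi(E)$ is a singular, and not a smooth, hyperplane section.
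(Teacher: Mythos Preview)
Your proof is correct and follows essentially the same approach as the paper: invoke the isomorphism $V\setminus\rho(D)\simeq W\setminus\varphi(E)=W\setminus L$ from Proposition~\ref{prop:W-5}(iv),(vi), and then apply Theorem~\ref{theorem-w5} to the singular hyperplane section $A=L$. Your extra justification for the singularity of $L$ is not needed, since Proposition~\ref{prop:W-5}(ii) already asserts it, but it does no harm.
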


\begin{proof} Indeed, by (iv) and (vi) in Proposition \ref{prop:W-5} we have $V\setminus\rho(D)\simeq 
W\setminus \varphi(E)\simeq W\setminus L$. 
Using Theorem \ref{theorem-w5} with $A=L$, the result follows.\end{proof}

\section{Cylindrical Mukai fourfolds of genus 7}\label{sec:g=7}
\label{section-g=12}
In this section we prove the following theorem. 

\begin{theorem}\label{thm:cylinders-in-V12} 
There exists a family of smooth cylindrical Mukai fourfolds $V=V_{12}\subset \PP^{9}$
of genus $7$
with $\Pic(V)\simeq \ZZ$. Its image in the corresponding moduli space has codimension $2$. 
\end{theorem}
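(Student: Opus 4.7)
The plan is to follow the blueprint of Sections \ref{section-2.2}--\ref{sec:g=8}: construct an explicit Sarkisov link between a carefully chosen member $V=V_{12}$ of the family of Mukai fourfolds of genus $7$ and the cylindrical fourfold $W=W_{2\cdot 2}\subset\PP^6$ from Section \ref{section-2.2}, in such a way that the cylinder provided by Corollary \ref{corollary-cylinder-W2.2} pulls back to a cylinder in the complement of a divisor on $V$.

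First, I would single out the subfamily of Mukai fourfolds $V_{12}$ containing a prescribed subvariety $Z\subset V$ (a plane or a low-degree scroll, selected so that the Chern class computation analogous to Lemmas \ref{lemma-W5-normal-bundle} and \ref{lem:V-14-plane} gives the right normal-bundle invariants). Blowing up $Z$ yields $\rho\colon\widetilde V\to V$ with $\rk\Pic(\widetilde V)=2$, and, just as in Propositions \ref{sarkisov-link-2.2} and \ref{prop:W-5}, I would use the Cone Theorem to produce a second extremal contraction $\varphi\colon\widetilde V\to W$ defined by the linear system $|\rho^*H-\mu E|$ for the appropriate integer $\mu$. The key numerical verifications are: base-point freeness of $|\rho^*H-\mu E|$ (from the fact that $Z$ is scheme-theoretically cut out by hyperplanes containing it); computation of $(\rho^*H-\mu E)^4$ via Lemma \ref{lemma-intersection-theory} to match the degree $4$ of $W_{2\cdot 2}\subset \PP^6$; the dimension of the target linear span from Riemann--Roch together with Kodaira vanishing; and the absence of divisorial or three-dimensional fibers of $\varphi$ outside a surface $F\subset W$, which, via \cite{Ando1985}, would let me conclude that $\varphi$ is the blow-up of a smooth surface and that $W$ is a smooth intersection of two quadrics in $\PP^6$.

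Second, the codimension $2$ statement calls for a parameter count in the spirit of Lemma \ref{lem:V14-codim}, with one extra incidence condition on top of the containment $Z\subset V$. Denoting by $\mathcal P$ the parameter variety for $Z$ (a Schubert-like locus in an ambient homogeneous space) and by $G$ the linear sections through $Z$, the incidence variety $\mathscr I=\{(Z,V):Z\subset V\}$ projects with generically finite fibers to $\mathcal V_{Z}$ by the same self-intersection trick used in Lemma \ref{lem:V14-codim}. Together with the extra incidence required by the link, this should yield $\dim \mathcal V_{Z}=\dim\mathcal V-2$, i.e.\ codimension $2$ (dimension $13$) in moduli. Existence of smooth members with the required incidence is then a Bertini-type argument together with Lemma \ref{lem:smooth section}, copied from Lemma \ref{lem:V-14}.

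Finally, I would identify the $\varphi$-exceptional divisor $D$ on $\widetilde V$ so that $\varphi(D)$ is the union of a hyperplane section $H_0$ of $W$ and the divisor $\rho(D_W)$ from Proposition \ref{sarkisov-link-2.2}, with $H_0$ containing a plane $\Pi\subset W$ meeting the distinguished line $l$ along at least a line, exactly as required by Corollary \ref{corollary-cylinder-W2.2}. The Sarkisov link will then produce an isomorphism between Zariski open subsets of $V$ and of $W$, and Corollary \ref{corollary-cylinder-W2.2} transports the cylinder in $W\setminus H_0$ back to $V$. The main obstacle is the construction and numerical verification of the link itself: identifying the correct center $Z$, ensuring that the target of the second contraction is genuinely $W_{2\cdot 2}\subset\PP^6$ (and not a different Fano fourfold), and, above all, arranging the plane/line incidence on $W$ that the pull-back of Corollary \ref{corollary-cylinder-W2.2} demands. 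This last requirement is precisely what accounts for the second unit of codimension, over and above the mere containment of $Z$.
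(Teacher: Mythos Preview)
Your overall strategy---a Sarkisov link between $V_{12}$ and $W_{2\cdot 2}$, then transport of the cylinder via Corollary~\ref{corollary-cylinder-W2.2}---is the paper's strategy. But the paper runs the link in the \emph{opposite direction}, and this choice is what makes both existence and the dimension count go through cleanly.

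Concretely, the paper starts on the $W$-side: one blows up a quintic del Pezzo surface $F=F_5\subset W=W_{2\cdot 2}$, and the second extremal contraction blows down to a plane $\Xi\subset V_{12}$. So your unknown center $Z\subset V$ is a plane, and the relevant linear system is $|2\rho^*H-E|$ on the blow-up of $F$ in $W$ (equivalently $|\varphi^*L-D|$ from the $V$-side, i.e.\ $\mu=1$). The payoff of starting from $W$ is that one can \emph{build} the input explicitly: project the Fano threefold $X_5\subset\PP^6$ from a general point to obtain a nodal $Y_4\subset\PP^5$ which automatically carries both a quintic del Pezzo $F$ and a plane $\Pi_0$ meeting $F$ in three points; then thicken $Y_4$ to a smooth $W_{2\cdot 2}\subset\PP^6$ by a generic pencil of quadrics. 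The hyperplane section $H_0=W\cap\langle F\rangle$ is exactly $Y_4$, so $\Pi_0\subset H_0$ comes for free and Corollary~\ref{corollary-cylinder-W2.2} applies with no extra incidence to engineer.

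This also shows why your codimension-$2$ bookkeeping is the weak point. The paper does not split the count as ``containment of $Z$'' plus ``one more incidence''; it counts pencils of quadrics in $\PP^6$ through a fixed $F$ modulo $\Aut(\PP^6,\PP^5)$, obtaining $20-7=13$, and compares with the $15$-dimensional moduli (finiteness of $\Aut(V)$ is checked separately to ensure the map to moduli has finite fibers). Your proposed decomposition would require establishing each codimension on the $V$-side, and the second condition---a plane in $W$ sitting inside the distinguished hyperplane section---has no evident intrinsic formulation on $V$. Finally, your last paragraph conflates two links: $\varphi\colon\widetilde V\to W$ contracts a single irreducible divisor onto the surface $F$ and sends the exceptional divisor over $\Xi$ onto $H_0$; the further projection $W\dashrightarrow\PP^4$ from a line is already packaged inside the proof of Corollary~\ref{corollary-cylinder-W2.2} and need not be invoked again.
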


The proof exploits several auxiliary results. 
Actually, our Mukai fourfold $V$ is obtained starting with a del Pezzo fourfold $W_{2\cdot 2}$ 
via a Sarkisov link, 
as described in Proposition \ref{thm:Sarkisov-link-V12} below.
We chose this link in such a way that the cylinder structure is preserved. 
This is the main point of our construction. 

\begin{proposition}
[\cite{Prokhorov-1993c}]
\label{thm:Sarkisov-link-V12}
Let $W=W_{2\cdot 2}\subset \PP^6$ be a smooth intersection of two quadrics,
and $H$ be a hyperplane section of $W$, so that the class of $H$ 
is the ample generator of $\Pic(W)$.
Suppose that $W$ contains an anticanonically embedded del Pezzo 
surface $F=F_5$ of degree $5$ and 
does not contain any plane which meets $F$ along a conic.
Then the following hold.
\begin{enumerate}\item
The linear system $|2H-F|$ of quadrics passing through $F$ 
defines a birational map $\phi: W \dashrightarrow V=V_{12}\subset \PP^{9}$,
where $V=\phi(W)$ is a Mukai fourfold of genus $7$
with $\Pic(V)\simeq \ZZ$. \item There is a commutative diagram
\begin{equation}
\label{equation-V12-diagram}
\xymatrix{
&\widetilde W\ar[dr]^{\varphi}\ar[dl]_{\rho}&
\\
W\ar@{-->}[rr]^{\phi}&&V 
} 
\end{equation}
where $\rho$ is the blowup of $F$ and $\varphi$ is the blowup of 
a plane $\Xi\subset V\subset \PP^9$.
\item
Let $E\subset\widetilde W$ \textup($D\subset\widetilde W$, respectively\textup) 
be the $\rho$-exceptional \textup($\varphi$-exceptional, respectively\textup) divisor, and 
let $L$ be the ample generator of $\Pic(V)$.
Then 
\[
\varphi^*L\sim 2\rho^*H-E,\quad D\sim \rho^*H-E,\quad \rho^*H\sim \varphi^*L -D,\quad 
E\sim \varphi^*L -2D. 
\]
\end{enumerate}
\end{proposition}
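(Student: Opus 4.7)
The strategy is to apply the Sarkisov-link machinery from the proofs of Propositions \ref{sarkisov-link-2.2} and \ref{prop:W-5}, with the surface $F$ as the center of the first blowup.

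First I would let $\rho \colon \widetilde W \to W$ be the blowup of $F$, with exceptional divisor $E = \rho^{-1}(F)$. Since $-K_W = 3H$, one has $-K_{\widetilde W} = 3\rho^*H - E$. Because $F = F_5$ is scheme-theoretically cut out by quadrics in its linear span $\langle F \rangle \simeq \PP^5 \subset \PP^6$, the base locus of $|2H - F|$ on $W$ equals $F$ scheme-theoretically, so $|2\rho^*H - E|$ is base point free on $\widetilde W$. Writing $-K_{\widetilde W} = \rho^*H + (2\rho^*H - E)$ as a sum of an ample and a base point free divisor shows that $\widetilde W$ is Fano with $\rk\Pic(\widetilde W) = 2$. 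The Cone Theorem then yields a second Mori contraction $\varphi \colon \widetilde W \to V$ distinct from $\rho$, and the base point freeness of $|2\rho^*H - E|$ forces $\varphi$ to be the morphism it defines. Writing $L$ for the ample generator of $\Pic(V) \simeq \ZZ$, we obtain $\varphi^*L \sim 2\rho^*H - E$; the remaining equivalences of (iii) then follow by linear algebra from $D \in |\rho^*H - E|$.

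Using Lemma \ref{lemma-intersection-theory}(ii) together with $H^4 = 4$, $H|_F = -K_F$, $K_F^2 = 5$, and $\Eu(F) = 7$, I would compute $L^4 = (2\rho^*H - E)^4 = 12$, and via Riemann-Roch with Kodaira Vanishing obtain $\dim|L| = 9$. Thus $V \subset \PP^9$ has degree $12$. From $-K_{\widetilde W} = \rho^*H + \varphi^*L$ together with $\rho^*H \sim \varphi^*L - D$, pushing forward yields $-K_V = 2L$, so $V$ is a Mukai fourfold of genus $7$. The effective divisor $D \in |\rho^*H - E|$ is unique (from $\dim|\rho^*H - E| = 0$) and irreducible (from $\rk\Pic(\widetilde W) = 2$), and it is $\varphi$-contracted because $\varphi^*L \cdot (\rho^*H - E)^3 = 0$. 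Since $\varphi$ is an extremal divisorial contraction, $\rk\Pic(V) = 1$.

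The main obstacle is the identification of $\varphi(D)$ with a plane $\Xi \subset V$. Mimicking the argument in the proof of Proposition \ref{prop:W-5}, I would rule out two-dimensional fibers of $\varphi$. Any such fiber not contained in $E$ would project under $\rho$ to a surface $\rho(B) \subset W$ whose general lines $C$ satisfy $(2H - F) \cdot C = 0$, forcing $\rho(B)$ to be a plane meeting $F$ along a conic, contradicting the hypothesis. Two-dimensional fibers contained in $E$ would force $\varphi(E)$ to be a cone, excluded by computing $\deg\varphi(E) = (\varphi^*L)^3 \cdot E$ via Lemma \ref{lemma-intersection-theory}(ii). With all $\varphi$-fibers of dimension at most one, Ando's theorem \cite{Ando1985} implies that $\varphi$ is the smooth blowup of a smooth surface $\Xi = \varphi(D) \subset V$. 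Finally, $L^2 \cdot \Xi = (\varphi^*L)^2 \cdot D = 1$ via Lemma \ref{lemma-intersection-theory}(ii) identifies $\Xi$ as a linearly embedded plane in $V$, completing the proof of (ii), and hence of (i) and (iii).
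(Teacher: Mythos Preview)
Your plan follows the paper's argument almost verbatim: blow up $F$, check $|2\rho^*H-E|$ is free so $\widetilde W$ is Fano of Picard rank $2$, run the second contraction, compute $L^4=12$ and $\dim|L|=9$, identify $D\sim\rho^*H-E$ as the unique $\varphi$-exceptional divisor, and then exclude two-dimensional $\varphi$-fibers so that Ando's theorem applies. The paper also computes $(2\rho^*H-E)^2\cdot D^2=-1$ (you wrote $(\varphi^*L)^2\cdot D$, but you mean $D^2$) to get $L^2\cdot\Xi=1$, and then invokes the Mukai classification to know $L$ is very ample, so that $\Xi$ is genuinely a plane in $\PP^9$; you should make that last step explicit.

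The one substantive divergence is your treatment of a two-dimensional fiber $B\subset E$. You propose to recycle the cone argument from Proposition~\ref{prop:W-5}, concluding that $\varphi(E)$ would be a cone and deriving a contradiction from $\deg\varphi(E)=(\varphi^*L)^3\cdot E$. That argument worked there because the base of $E$ was $\Pi\simeq\PP^2$: a birational image of $\PP^2$ has degree a perfect square, so degree $5$ is impossible. Here the base of $E$ is the quintic del~Pezzo surface $F_5$, and the computation gives $\deg\varphi(E)=12$; there is no evident numerical obstruction to a cone of degree $12$ over an image of $F_5$, so the contradiction does not materialize. The paper handles this step differently: since $(2\rho^*H-E)$ restricts to $\OOO(1)$ on each fiber of $E\to F$, the map $\rho|_B\colon B\to F$ is finite and surjective, whence $\rk\Pic(B)\ge\rk\Pic(F)=5$; this is incompatible with the Andreatta--Wi\'sniewski classification \cite{Andreatta1998a} of two-dimensional fibers of smooth fourfold extremal contractions. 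You should replace your cone argument with this Picard-rank argument.

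A minor point: you assert $D\in|\rho^*H-E|$ directly, but a priori the proper transform of the hyperplane $\langle F\rangle\cap W$ lies in $|\rho^*H-kE|$ for some $k\ge 1$; the paper fixes $k=1$ by computing $(2\rho^*H-E)^3\cdot(\rho^*H-kE)=-12(k-1)\ge 0$.
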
 

\begin{proof} 
Since $F$ is a scheme-theoretic intersection of quadrics,
the linear system $|2\rho^*H-E|$ is base point free.
Hence the divisor $-K_{\widetilde W}=\rho^*H+2\rho^*H-E$ is ample, i.e. 
$\widetilde W$ is a Fano fourfold with $\rk \Pic(\widetilde W)=2$.
By the Cone Theorem there exists a Mori contraction $\varphi: \widetilde W\to U$ different from $\rho$.
Let $H_F$ be the hyperplane section of $W$ that passes through $F$,
and let $D$ be its proper transform in $\widetilde W$. 
We can write $D\sim \rho^*H-kE$ for some $k>0$. On the other hand, we have 
$0\le (2\rho^*H-E)^3\cdot D= -12(k-1)$. Hence, $k=1$ and $(2\rho^*H-E)^3\cdot D=0$.
This means that the divisor class of $2\rho^*H-E$ is not ample, and so it yields a supporting linear 
function of the extremal ray generated by the curves in the
fibers of $\varphi$. 
Moreover, we can write $2\rho^*H-E=\varphi^* L$, where $L$ is the ample generator of 
$\Pic(V)\simeq \ZZ$. 
We have $L^4=(2\rho^*H-E)^4=12$. It follows that $\dim V=4$, i.e. 
$\varphi$ is birational, and its exceptional locus, say, $D$
is an irreducible divisor. 
Using the Riemann-Roch and Kodaira Vanishing Theorems we obtain the equality
$\dim |2\rho^*H-E | =9$. 
This yields a diagram
\[
\xymatrix{
&\widetilde W\ar[d]_{\rho}\ar[r]^{\varphi}&V\ar[d]
\\
&W\ar@{-->}[r]^{\phi}&V'
} 
\]
where $\widetilde W\to V'\subset \PP^9$ is given by the linear system $|2\rho^*H-E|$,
and $\widetilde W\overset{\varphi}{\longrightarrow} V\longrightarrow V'$
is the Stein factorization.

Since $(2\rho^*H-E)^2\cdot D^2=-1$, then $\varphi(D)$ is a surface 
with $L^2\cdot\varphi(D)=1$.
Let $B\subset \widetilde W$ be a two-dimensional fiber of $\varphi$ not contained in $E$.
Then by our geometric construction, $\rho(B)$ is a plane meeting $F$
along a conic. This contradicts our assumption.
If $B\subset \widetilde W$ is a two-dimensional fiber of $\varphi$ contained in $E$, 
then the restriction $\rho|_B: B \to F$ is a finite morphism.
Hence $\rk \Pic (B)\ge\rk \Pic (F)=5$.
This contradicts the classification of fourfold contractions \cite{Andreatta1998a}.
Therefore, all fibers of $\varphi$ have dimension $\le 1$.
By \cite{Ando1985}, both $V$ and $\varphi(D)$ are smooth, and $\varphi$ is the 
blowup of $\varphi(D)$. Since $-K_V=\varphi_*(-K_W)=2L$, the variety $V$ is a Fano 
fourfold of index $2$ and of genus $g=\frac12 L^4+1=7$.
By virtue of the classification, $L=-\frac12 K_V$ is very ample, so $V'\to V$ is an isomorphism.
\end{proof}

The following corollary is immediate.

\begin{corollary}\label{cor:first-isomorphism}
We have $V\setminus \varphi(E)\simeq W\setminus \rho(D)$, where $\varphi(E)$ and $\rho(D)$ are 
 hyperplane sections of $V$ and $W$, respectively. Moreover, $\varphi(E)$ is singular along 
$\Xi$, and $\rho(D) =W\cap \langle F\rangle$, where $\langle F\rangle\simeq\PP^5$ 
is the linear span of $F$
in $\PP^6$.
\end{corollary}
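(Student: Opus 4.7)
The plan is to derive all four assertions from the linear equivalences in Proposition \ref{thm:Sarkisov-link-V12}(iii) combined with the standard behavior of blowup morphisms. First I would show that $\rho(D)$ and $\varphi(E)$ are hyperplane sections. In the proof of Proposition \ref{thm:Sarkisov-link-V12}, the divisor $D$ is constructed as the proper transform of a hyperplane section $H_F$ of $W$ containing $F$, so $\rho(D) = H_F \in |H|$; equivalently, pushing forward $D \sim \rho^* H - E$ under $\rho$ gives $\rho_* D \sim H$. Symmetrically, since $D$ is the unique $\varphi$-exceptional divisor, $\varphi_* D = 0$, so pushing forward $E \sim \varphi^* L - 2D$ yields $\varphi_* E \sim L$, showing that $\varphi(E)$ is a hyperplane section of $V$.

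Next, I would establish that $\Xi \subset \Sing(\varphi(E))$. Because $V$ has Picard rank $1$ and $L$ is primitive in $\Pic(V)$, the relation $\varphi_* E \sim L$ forces $\varphi|_E \colon E \to \varphi(E)$ to be birational and $\varphi(E)$ to be an irreducible divisor in $|L|$. The proper transform of $\varphi(E)$ under $\varphi$ is therefore $E$, so $\varphi^*(\varphi(E)) = E + m D$, where $m$ is the multiplicity of $\varphi(E)$ along $\Xi$. Comparing with the class identity $\varphi^* L \sim E + 2 D$ forces $m = 2$, hence $\Xi \subset \Sing(\varphi(E))$. This can also be verified locally: using coordinates $(u_1, u_2, x, y)$ on $V$ with $\Xi = \{x = y = 0\}$, a local equation $f = \sum_{i,j} f_{ij}(u_1,u_2) x^i y^j$ of $\varphi(E)$ pulls back in the blowup chart $y = xt$ as $\varphi^* f = \sum f_{ij}(u_1,u_2) x^{i+j} t^j$, and divisibility by $x^2$ forces $f_{ij} = 0$ for $i + j < 2$.

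With $\Xi \subset \varphi(E)$ in hand, the isomorphism of complements is routine: since $\rho$ is an isomorphism over $W \setminus F$ and $F \subset H_F = \rho(D)$, restricting $\rho$ gives $\widetilde W \setminus (D \cup E) \simeq W \setminus \rho(D)$; similarly, since $\varphi$ is an isomorphism over $V \setminus \Xi$, restricting $\varphi$ gives $\widetilde W \setminus (D \cup E) \simeq V \setminus \varphi(E)$. Composing the two identifications yields $V \setminus \varphi(E) \simeq W \setminus \rho(D)$.

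Finally, to identify $\rho(D)$ with $W \cap \langle F \rangle$, I would use that $F = F_5$ is anticanonically embedded, so $\OOO_W(1)|_F = -K_F$. Since $h^0(F, -K_F) = 6$, the complete anticanonical system embeds $F$ as a nondegenerate surface of degree $5$ in $\PP^5$, so the image of $F$ in $\PP^6$ spans precisely a hyperplane $\langle F \rangle \simeq \PP^5$. The hyperplane section $W \cap \langle F \rangle$ contains $F$ and is the unique such section (a second hyperplane through $F$ would force $F$ into a $\PP^4$), so $\rho(D) = H_F = W \cap \langle F \rangle$. The main technical input is the multiplicity computation in the second paragraph; the remaining assertions follow by formal manipulation of the divisor identities and of the exceptional loci of the two blowups.
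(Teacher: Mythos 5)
Your proof is correct and follows essentially the route the paper intends: the paper declares the corollary ``immediate'' from Proposition \ref{thm:Sarkisov-link-V12}, and your argument simply spells out the details --- reading off the divisor classes from part (iii), computing the multiplicity of $\varphi(E)$ along $\Xi$ from $\varphi^*L\sim E+2D$, and using that the anticanonically embedded quintic $F$ spans a unique hyperplane $\PP^5\subset\PP^6$ to identify $\rho(D)=H_F=W\cap\langle F\rangle$. No gaps.
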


\begin{corollary}\label{corrollary-V12-} Let
$V$ be a variety as in Proposition \ref{thm:Sarkisov-link-V12}.
Then the number of planes contained in $V$ is finite, 
and the group $\Aut(V)$ is finite as well. 
\end{corollary}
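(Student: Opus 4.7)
The plan is to transfer both assertions across the Sarkisov diagram \eqref{equation-V12-diagram} using the numerical relations of Proposition \ref{thm:Sarkisov-link-V12}(iii), comparing planes and automorphisms on $V$ with corresponding data on the simpler fourfold $W$.

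For the finiteness of planes, let $\Pi\subset V$ be a plane with $\Pi\neq\Xi$, and let $\widetilde\Pi\subset\widetilde W$ be its proper transform under $\varphi$. Since $\Pi$ and $\Xi$ are distinct planes in $\PP^9$, we have $\dim(\Pi\cap\Xi)\in\{-\infty,0,1\}$. Using $\rho^*H=\varphi^*L-D$ together with the projection formula $(\varphi^*L)^2\cdot\widetilde\Pi=L^2\cdot\Pi=1$, I would expand
\[
(\rho^*H)^2\cdot\widetilde\Pi \;=\; 1 \;-\; 2\,\varphi^*L\cdot D\cdot\widetilde\Pi \;+\; D^2\cdot\widetilde\Pi
\]
case by case. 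If $\Pi\cap\Xi=\emptyset$, both correction terms vanish and $\rho(\widetilde\Pi)$ is a plane in $W\subset\PP^6$, yielding at most $64$ possibilities by \cite{Reid1972}. If $\Pi\cap\Xi$ is a point, then $\widetilde\Pi\simeq\FF_1$ and $D|_{\widetilde\Pi}$ is the $(-1)$-curve, giving $(\rho^*H)^2\cdot\widetilde\Pi=0$; hence $\rho$ contracts $\widetilde\Pi$, forcing $\widetilde\Pi\subset E$ and $\widetilde\Pi=E|_C$ for some smooth rational curve $C\subset F$ whose normal bundle $\NNN_{F/W}|_C$ has splitting type $(a,a\pm 1)$, and such $C$ form a finite set in the del Pezzo quintic $F$. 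If $\Pi\cap\Xi$ is a line, an analogous local computation gives $\widetilde\Pi\simeq\PP^2$ with $D|_{\widetilde\Pi}$ a line in $\PP^2$, so again $\widetilde\Pi\subset E$; but $\PP^2$ embeds neither as a section of the $\PP^1$-bundle $E\to F$ (since $F\not\simeq\PP^2$) nor as $E|_C$ (which is always a Hirzebruch surface), so this case is vacuous. Thus $V$ contains only finitely many planes.

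For the finiteness of $\Aut(V)$, the group acts on the finite set of planes, so a finite-index subgroup $G\subset\Aut(V)$ preserves $\Xi$. Since $\widetilde W=\mathrm{Bl}_\Xi V$ is intrinsic to $(V,\Xi)$, each element of $G$ lifts canonically to $\Aut(\widetilde W)$ and descends via $\rho$ to an element of $\Aut(W,F)\subset\Aut(W)$. It is classical that $\Aut(W_{2\cdot 2})$ is finite: every such automorphism extends to $\PP^6$ (because $\Pic(W)\simeq\ZZ$ is generated by the hyperplane class) and permutes the generically $7$ singular members of the pencil of quadrics cutting out $W$, yielding a homomorphism to a symmetric group whose kernel lies in the simultaneous stabilizer of two quadrics, isomorphic to $(\ZZ/2)^6$ in general position. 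Hence $\Aut(V)$ is finite. The most delicate step is the case $\dim(\Pi\cap\Xi)=1$, where an explicit local coordinate computation is needed to confirm that the proper transform $\widetilde\Pi$ is genuinely smooth and isomorphic to $\PP^2$, so that the exclusion $\PP^2\not\subset E$ yields a real contradiction rather than revealing an exceptional family of planes.
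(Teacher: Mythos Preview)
Your argument for $\Aut(V)$ is essentially the paper's: both descend the problem through diagram \eqref{equation-V12-diagram} to $\Aut(W)$ and then use that the seven vertices of the singular quadrics in the pencil are fixed. The paper works with the connected component $\Aut^0(V)$ rather than a finite-index subgroup, but this is cosmetic.

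For the finiteness of planes, however, your route is considerably more involved than the paper's and contains a gap. The paper argues by contradiction in one line: if $V$ carried a positive-dimensional family $\{\Xi_t\}$ of planes, then since $\phi^{-1}\colon V\dashrightarrow W$ is the linear projection from $\Xi$, the image of a general member $\Xi_t$ is again a plane in $W$; but $W_{2\cdot 2}$ has only $64$ planes (Reid). No case analysis on $\dim(\Pi\cap\Xi)$ is needed.

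In your case $\dim(\Pi\cap\Xi)=0$, the step ``such $C$ form a finite set'' is not justified by the splitting-type condition you invoke: a rational surface like $F_5$ contains infinitely many smooth rational curves, and demanding that $\NNN_{F/W}|_C$ have odd splitting gap does not by itself cut this down to a finite set. What actually constrains $C$ is your own computation $\rho^*H|_{\widetilde\Pi}=f$: this says $H\cdot C=1$, so $C$ is a \emph{line} on $F$, and there are exactly ten of those. (In fact a parity check then shows $\deg\NNN_{F/W}|_C=2H\cdot C=2$ is even, so $E|_C$ is $\FF_0$ or $\FF_2$, never $\FF_1$, and the case is vacuous.) So the conclusion survives, but your stated reason does not. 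Conversely, the case $\dim(\Pi\cap\Xi)=1$ that you flag as delicate is in fact the easy one: once $(\rho^*H)|_{\widetilde\Pi}=0$ you get that $\widetilde\Pi$ lies in a single fibre of $\rho$, which is $\PP^1$, an immediate contradiction---no need to analyse sections of $E\to F$.
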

\begin{proof}
Assume that $V$ contains a family of planes $\Xi_t$.
The map $\phi^{-1}$ is a projection from $\Xi$. 
Hence the image of a general plane $\Xi_t$ is again a plane.
On the other hand, the set of planes contained in $W=W_{2\cdot 2}\subset \PP^6$ 
is finite  \cite{Reid1972}.

The group $\Aut(V)$ consists of the 
projective transformations of $\PP^{9}$ preserving $V$, 
so this is a linear algebraic group. Since the number of planes contained in $V$ is finite,
the identity component $\Aut^0(V)$ preserves each of these planes. 
In particular, it preserves the center $\Xi$ of the blowup $\varphi$. Hence  diagram 
\eqref{equation-V12-diagram} is 
$\Aut^0(V)$-equivariant with respect to a faithful
$\Aut^0(V)$-action on $W$.

However, the group $\Aut^0(W)$ is trivial. Indeed, the embedding $W\hookrightarrow\PP^6$ 
being given by the linear system $|-\frac13 K_W|$, 
 the latter group acts linearly on $\PP^6$ and preserves  every degenerate member 
of the pencil of quadrics in $\PP^6$
passing through $W$. These degenerate members are 7 quadric cones, whose vertices are 
points in $\PP^6$ in general position fixed under the $\Aut^0(W)$-action.  
Hence the group $\Aut^0(V)$ is also trivial, and so $\Aut(V)$ is finite. 
\end{proof}

Due to Corollary \ref{cor:first-isomorphism}, to prove Theorem \ref{thm:cylinders-in-V12} 
it suffices to show the existence of a cylinder in $W\setminus H_0$,
where $H_0=\rho(D)$ is a hyperplane section of $W=W_{2\cdot 2}\subset \PP^6$ which
contains a quintic del Pezzo surface $F$. To this end, we apply Corollary \ref{corollary-cylinder-W2.2}.
The assumptions of Corollary \ref{corollary-cylinder-W2.2} are satisfied once 
there is a plane $\Pi_0\subset H_0$ which does not meet $F$ along a conic,
see Proposition \ref{thm:Sarkisov-link-V12}. 

 Using the following construction we produce examples, 
where these geometric restrictions are fulfilled. This gives 
the first part of Theorem \ref{thm:cylinders-in-V12}. 

\begin{sit}\label{sit:construction} 
{\bf Construction} (cf. \cite[5.3.9]{Prokhorov-GFano-1}).
Let $X=X_5\subset \PP^6$ be a Fano threefold of index 2 and of degree 5.
It is well known (see e.g.
\cite[Theorem 3.3.1]{Iskovskikh-Prokhorov-1999})
that $X$ can be realized as a section of 
the Grassmannian $\Gr(2,5)$ under its Pl\"ucker embedding in $\PP^9$
by a subspace of codimension 3. 
The projection from a general point $P'\in X_5$ 
sends $X_5$ to a singular Fano threefold $Y=Y_4\subset\PP^5$ of degree 4; the latter threefold 
 is a complete intersection of two quadrics, say, $Q_1'$ and $Q_2'$
(see \cite[Cor. 0.8]{Shin1989}). 
\end{sit}

\begin{lemma}\label{lem:add-1}
The variety Y constructed above 
contains an anticanonically embedded del Pezzo surface
$F =F_5\subset \PP^5$ of degree $5$ and a unique plane $\Pi_0$. This plane meets $F$ at three 
points, say, $A_j$, $j=1,2,3$, that are the only 
singular points of $Y$ and these singularities are ordinary double points.
\end{lemma}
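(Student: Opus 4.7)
The plan is to resolve the projection $\phi\colon X_5\dashrightarrow Y$ from $P'\in X_5$ by blowing up $P'$ and then to read off the assertions from the resulting diagram. Let $\rho\colon \widetilde X\to X_5$ be the blowup of $P'$ with exceptional divisor $E\cong\PP^2$. For generic $P'$ the linear system $|\rho^*H-E|$, where $H$ is the hyperplane class of $X_5\subset\PP^6$, is base point free and defines a morphism $\varphi\colon\widetilde X\to Y\subset\PP^5$ resolving $\phi$.

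The plane $\Pi_0$ appears as $\varphi(E)$: since $E|_E\cong\OOO_E(-1)$, we have $(\rho^*H-E)|_E\cong\OOO_E(1)$, so $\varphi|_E$ embeds $E\cong\PP^2$ as a plane $\Pi_0\subset Y$. For $F$, take a hyperplane $M\subset\PP^6$ with $P'\notin M$; then $F_0:=M\cap X_5$ is a smooth anticanonically embedded del Pezzo quintic in $M\cong\PP^5$, and since $P'\notin M$, the projection $\phi$ restricts on $M$ to a linear isomorphism $M\overset{\sim}{\to}\PP^5$, sending $F_0$ isomorphically onto an anticanonically embedded surface $F\subset Y$ of degree $5$.

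To locate the singularities of $Y$, I use the classical fact that through a generic point of the quintic del Pezzo threefold $X_5$ pass exactly three pairwise disjoint lines $l_1,l_2,l_3$, each with normal bundle $\NNN_{l_i/X_5}\cong\OOO_{\PP^1}\oplus\OOO_{\PP^1}$ (see e.g. \cite{Iskovskikh-Prokhorov-1999}). Their proper transforms $\widetilde l_i\subset\widetilde X$ satisfy $\widetilde l_i\cdot(\rho^*H-E)=H\cdot l_i-1=0$, so $\varphi$ contracts them to three points $A_1,A_2,A_3\in Y$. A standard normal-bundle computation for the blowup of a smooth point on a smooth curve gives $\NNN_{\widetilde l_i/\widetilde X}\cong\OOO_{\PP^1}(-1)\oplus\OOO_{\PP^1}(-1)$, so $\varphi$ is a small contraction with three $(-1,-1)$-curves as exceptional fibers and therefore yields three ordinary double points at the $A_i$; since $\varphi$ is an isomorphism off $\bigcup\widetilde l_i$, the $A_i$ are the only singular points of $Y$. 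The intersection $\Pi_0\cap F$ is then read off on $\widetilde X$: each $\widetilde l_i$ meets $E$ in one point (the tangent direction of $l_i$ at $P'$) and meets $\rho^{-1}(F_0)$ in one point ($l_i\cap M$, since $P'\notin M$), while $E\cap\rho^{-1}(F_0)=\emptyset$. Pushing forward by $\varphi$ yields $\Pi_0\cap F=\{A_1,A_2,A_3\}$.

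Finally, for the uniqueness of $\Pi_0$ the strategy is to exploit $\Pic(\widetilde X)=\ZZ\rho^*H\oplus\ZZ E$: any plane $\Pi'\subset Y$ satisfies $L^2\cdot\Pi'=1$ and corresponds via its strict transform on $\widetilde X$ to a divisor class $a\rho^*H+bE$, and the intersection relations of Lemma \ref{lemma-intersection-theory}(ii) combined with the numerical data of a plane reduce the situation to a small Diophantine system whose only effective solution forces $\Pi'=\Pi_0$. The two most delicate points I expect to encounter are this uniqueness step and the verification that the three contracted curves are genuine $(-1,-1)$-curves (so that the singularities are ordinary double points, not worse compound Du Val singularities); both are essentially classical, but each requires a careful local and numerical analysis on $\widetilde X$. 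The remaining assertions follow from routine intersection theory and Chern class computations on the blowup.
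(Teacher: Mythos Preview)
Your overall approach coincides with the paper's: both resolve the projection by blowing up $P'$, identify $\Pi_0=\varphi(E)$, take $F$ as the image of a general hyperplane section of $X_5$ missing $P'$, and read off the three nodes from the three lines on $X_5$ through $P'$. Your normal-bundle computation showing that the $\widetilde l_i$ are $(-1,-1)$-curves is in fact more explicit than the paper, which simply invokes van der Waerden purity and asserts without further argument that the singularities are nodes.

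The uniqueness step is where you diverge, and where there is a small slip. Lemma~\ref{lemma-intersection-theory}(ii) treats the blowup of a \emph{surface in a fourfold}; here you are blowing up a \emph{point in a threefold}, so that reference is inapplicable (the relevant intersection numbers on $\widetilde X$ are the elementary $(\rho^*H)^3=5$, $E^3=1$, with all mixed products zero). Your Diophantine scheme can nonetheless be pushed through: writing $\widetilde\Pi'\sim a\rho^*H+bE$, the relation $(\rho^*H-E)^2\cdot\widetilde\Pi'=\deg\Pi'=1$ gives $5a+b=1$, while $\widetilde l_j\cdot\widetilde\Pi'=a+b=1-4a$; for $a\ge 1$ this is negative, forcing $\widetilde l_j\subset\widetilde\Pi'$ for every $j$, so $\Pi'$ contains the three non-collinear points $A_j$ and hence $\Pi'=\Pi_0$; the case $a=0$ gives $\widetilde\Pi'\sim E$, again $\Pi'=\Pi_0$. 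The paper's argument is different and somewhat slicker: if $\Pi'$ misses one $A_j$ then, since the $\widetilde l_j$ all lie in the same extremal ray of $\NE(\widetilde X)$, it misses all of them and is therefore Cartier on $Y$; but then $\deg\Pi'$ would be a multiple of $\deg Y=4$, contradicting $\deg\Pi'=1$. Either route needs the non-collinearity of the $A_j$, which follows because $\varphi|_E$ is a linear embedding and the three tangent directions of the $l_j$ at $P'$ are not coplanar.
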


\begin{proof}
By \cite{Furushima-1989a}, through a general point $P'\in X_5$ pass exactly 3 lines, 
say, $l_j$, $j=1,2,3$, on $X_5$. These lines do not 
belong to the same plane. Under our projection, they are contracted 
 to 3 distinct non-collinear points, say, $A_j$, $j=1,2,3$, of $Y_4$. 
By van der Waerden's purity theorem, 
the points $A_j$ are the only singularities (actually, nodes) of $Y_4$. 
Let $\Pi_0$ be the plane in $\PP^5$ through the points $A_j$, $j=1,2,3$. We claim that
$\Pi_0$ is contained in $Y_4$ and is a unique plane contained in $Y_4$. Indeed, 
consider the diagram 
\[
\xymatrix{
&\widetilde{X_5}\ar[dr]^{\varphi'}\ar[dl]_{\rho'}&
\\
X_5\ar@{-->}[rr]^{\phi'}&&Y_4
} 
\]
where $\rho'$ is the blowup of $P'$ and $\varphi'$ is the (small) contraction of the proper transforms 
$\widetilde{l_j}\subset\widetilde{X_5}$ of the lines $l_j$, $j=1,2,3$. Clearly, $\Pi_0=\varphi'(E')$, 
where $E'\simeq\PP^2$ 
is the exceptional divisor of the blowup $\rho'$. Hence $\Pi_0\subset Y_4$. 

We have 
$\rk \Pic (\widetilde{X_5})=2$, and by duality 
$\rk {\operatorname{N}_1(\widetilde{X_5}})_{\RR}=2$. 
For a general hyperplane section $H$ of 
$Y_4$ we have ${\varphi'}^*H\cdot\widetilde{l_j}=0$, $j=1,2,3$. It follows that for any plane 
$\Pi'$ contained in $Y_4$, the intersection numbers $\widetilde{\Pi'}\cdot \widetilde{l_j}$, $j=1,2,3$ are 
simultaneously all zero or not, where $\widetilde{\Pi'}$ is the proper transform of $ \Pi'$ in $\widetilde{X_5}$. 
If $\widetilde{\Pi'}$ does not meet the curves $\widetilde{l_j}$, then $ \Pi'$ does not pass through the singular 
points of $Y_5$ and so is a Cartier divisor on $Y_4$. Then $1=\deg\Pi'\equiv 0 \mod 4$, a contradiction. 
Thus $A_j\in\Pi'$, $j=1,2,3$, hence $\Pi'=\Pi_0$, as claimed. 

A general hyperplane section $F'$ of $X_5$ in $\PP^6$ is a smooth del Pezzo surface of degree 5. Since $F'$ 
meets transversally the lines $l_j$ and does not contain $P'$, it maps under $\phi'$ isomorphically onto its image, 
say, $F$ in $Y_4$, and 
$F\cap \Pi_0=\{A_1,A_2,A_3\}$.
 \end{proof}

\begin{sit}\label{sit:construction-2} 
Let $q_i(x_0,\ldots,x_5)=0$ be the equation of $Q_i'$ in $\PP^5$, $i=1,2$. Consider the quadrics $Q_i$ in $\PP^6$ 
with equations $q_i(x_0,\ldots,x_5)+x_6f_i(x_0,\ldots,x_5)=0$, $i=1,2$, where $f_1$ and $f_2$ are generic linear forms. 
We claim that the fourfold $W=W_{2\cdot 2}=Q_1\cap Q_2$ in $\PP^6$ is smooth and satisfies 
all the assumptions of Proposition \ref{thm:Sarkisov-link-V12}. To show the claim, let us notice that the hyperplane $x_6=0$ in $\PP^6$ cuts the quadric 
$Q_i$ along $Q_i'$ and cuts $W$ along $Y_4=Q_1'\cap Q_2'$. Hence $W$ contains the smooth del Pezzo surface $F\subset Y_4$ 
of degree 5, and does not contain any plane which meets $F$ along a conic. Indeed, otherwise such a plane would be contained 
in the hyperplane $x_6=0$, and so coincides with $\Pi_0$ by virtue of Lemma \ref{lem:add-1}. Since $\Pi_0$ meets $Y_4$ just in the points $A_j$, $j=1,2,3$, 
we get a contradiction. This proves the claim. 
\end{sit}

In the following lemma we provide an alternative construction of the family of pairs $(Y,F)$ as in 
\ref{sit:construction}-\ref{lem:add-1}, which will be used later on. 

\begin{lemma}
Let $F\subset \PP^5$ be a del Pezzo surface of degree $5$ and 
 $Q_1$, $Q_2$ be general quadrics in $\PP^5$ containing $F$.
Then $Y:=Q_1\cap Q_2$ is a threefold as in \textup{\ref{sit:construction}}. 
In particular, $Y$ contains a plane meeting $F$ in three points.
\end{lemma}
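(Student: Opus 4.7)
The plan is to apply a dimension-count / dominance argument showing that a generic pencil of quadrics through $F$ arises as the defining pencil of some $Y_4$ of the type in Construction~\ref{sit:construction}. Introduce two families: $\mathcal{A}$, consisting of pairs $(Y_4, F)$ with $Y_4 \subset \PP^5$ of type \ref{sit:construction} and $F \subset Y_4$ a del Pezzo surface of degree $5$; and $\mathcal{B}$, consisting of pairs $(F, \Lambda)$ with $F \subset \PP^5$ a del Pezzo of degree $5$ and $\Lambda \subset |I_F(2)|$ a pencil of quadrics. There is a natural morphism
\[
\Phi \colon \mathcal{A} \to \mathcal{B}, \qquad (Y_4, F) \mapsto \bigl(F,\, H^0(I_{Y_4}(2))\bigr),
\]
using that $Y_4$ is a complete intersection of two quadrics. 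This map is injective, since $Y_4$ is recovered as the base locus of its defining pencil.

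Next, I would compute dimensions on both sides. Since $F_5 = \Gr(2,5) \cap \PP^5$ is cut out by the $5$ Pl\"ucker quadrics, one has $h^0(I_F(2)) = 5$, so $|I_F(2)| \simeq \PP^4$ and pencils in it form $\Gr(2,5)$ of dimension $6$. Embedded del Pezzo surfaces $F \subset \PP^5$ form a single $\mathrm{PGL}_6$-orbit of dimension $35$ (because $\Aut(F_5)$ is finite), so $\dim \mathcal{B} = 41$. On the other side, embedded $Y_4 \subset \PP^5$ forms a single $\mathrm{PGL}_6$-orbit of dimension $35$ (by the uniqueness implicit in Construction~\ref{sit:construction}), while on each $Y_4$ the del Pezzo surfaces $F$ arise as images $\pi_{P'}(H \cap X_5)$ for hyperplanes $H \subset \PP^6$ not containing $P'$, which form an open subset of $\PP^6$ of dimension $6$; hence $\dim \mathcal{A} = 41$ as well.

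Both $\mathcal{A}$ and $\mathcal{B}$ being irreducible of the same dimension $41$ and $\Phi$ being injective, the image of $\Phi$ is dense in $\mathcal{B}$. Consequently, for a generic $(F, \Lambda) \in \mathcal{B}$ the base locus $Y = Q_1 \cap Q_2$ of $\Lambda = \langle Q_1, Q_2\rangle$ is a threefold of type \ref{sit:construction}. Since all embedded del Pezzo surfaces $F \subset \PP^5$ are $\mathrm{PGL}_6$-equivalent, this holds for any chosen $F$ and a generic pencil in $|I_F(2)|$. The statement about a plane in $Y$ meeting $F$ in three points then follows immediately from Lemma~\ref{lem:add-1}. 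The main obstacle is verifying the dimension counts rigorously—in particular, showing that distinct hyperplane sections of $X_5$ away from $P'$ yield distinct del Pezzo surfaces inside $Y_4$ (so that the family of $F \subset Y_4$ genuinely has dimension $6$), and justifying that $\Phi$ is an algebraic morphism rather than merely a set-theoretic bijection; a more hands-on alternative would be to locate the three nodes of $Y$ on $F$ by a Chern class computation on $F$ and then span them to produce $\Pi_0 \subset Y$ directly.
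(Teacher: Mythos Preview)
Your dominance/dimension-count approach is a legitimate alternative and does establish the lemma once the acknowledged gaps are filled, but it is quite different from the paper's argument. The paper proceeds by direct construction: take a third general quadric $Q\supset F$; then $Y\cap Q=F\cup F'$ with $F'$ a cubic surface scroll (by a degree count), and intersecting $Y$ with the hyperplane $\langle F'\rangle\simeq\PP^4$ leaves a residual plane $\Pi_0\subset Y$. Having exhibited the plane explicitly, the paper then invokes the classification in \cite[5.3.9]{Prokhorov-GFano-1} of such nodal complete intersections containing a plane to conclude that $Y$ is of the type in~\ref{sit:construction}. This is shorter and more concrete: it produces $\Pi_0$ on the nose rather than pulling it back from Lemma~\ref{lem:add-1} via an abstract identification.

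Two points in your argument deserve tightening beyond what you already flagged. First, you should define $\mathcal A$ using only those $F\subset Y_4$ that arise as images of hyperplane sections of $X_5$ through the projection $\pi_{P'}$: this is both what you actually parametrise when computing $\dim\mathcal A=41$ and what is required to invoke Lemma~\ref{lem:add-1} at the end (the conclusion $\Pi_0\cap F=\{A_1,A_2,A_3\}$ there relies on the hyperplane meeting each exceptional line $l_j$). Second, the claim that embedded $Y_4\subset\PP^5$ form a single $35$-dimensional $\mathrm{PGL}_6$-orbit requires knowing that $\Aut(X_5)\simeq\mathrm{PGL}_2$ acts on $X_5$ with a dense $3$-dimensional orbit (so generic choices of $P'$ yield projectively equivalent $Y_4$) and that the stabiliser of a generic $P'$ is finite (so $\Aut(Y_4)$ is finite); neither is hard, but both should be said. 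With these in place your argument is complete, though the paper's residual-intersection trick is the more economical route.
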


\begin{proof}
Let $Q\subset \PP^5$ be another general quadric containing $F$.
Then $Y\cap Q= F\cup F'$, where $F'$ is a cubic surface scroll.
The linear span $\Lambda=\langle F'\rangle$ is a subspace of dimension 4.
Hence $Y\cap \Lambda= F'\cup \Pi_0$, where $\Pi_0$ is a plane contained in $Y$. By \ref{sit:construction},
for a general choice of $Q_1$, $Q_2$, and $Q$, 
the variety $Y$ has only isolated singularities, 
and these singularities are nodes. Moreover, $Y$ contains no planes other than $\Pi_0$.
Such varieties $Y$ are described in \cite[5.3.9]{Prokhorov-GFano-1},
and their construction coincides with that of \ref{sit:construction}.
\end{proof}

 Using \cite{Mukai-1989} one can deduce that the moduli space of
the Mukai fourfolds of genus $7$ 
has dimension $15$. The second assertion of Theorem \textup{\ref{thm:cylinders-in-V12}} 
follows now from the next lemma. 

\begin{lemma}\label{lem:dimension-count}
The image in the moduli space of the family of all Fano fourfolds of genus $7$ 
obtained by our construction \textup{\ref{sit:construction}} has dimension $13$.
\end{lemma}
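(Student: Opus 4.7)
The plan is to first reduce the statement, via the Sarkisov link of Proposition \ref{thm:Sarkisov-link-V12} and the finiteness result of Corollary \ref{corrollary-V12-}, to a dimension count for the moduli of pairs $(W, F)$, where $W = W_{2\cdot 2} \subset \PP^6$ is smooth and $F = F_5 \subset W$ is an anticanonically embedded del Pezzo quintic satisfying the hypotheses of Proposition \ref{thm:Sarkisov-link-V12}. Since any such constructed $V$ contains only finitely many planes, the forgetful map $(V, \Xi) \mapsto V$ is finite-to-one, so the image in the $15$-dimensional moduli space of Mukai fourfolds of genus $7$ has the same dimension as the moduli of such pairs $(W, F)$ up to projective equivalence in $\PP^6$.

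Next, I would compute this moduli dimension via Hilbert schemes. For this, I use that $F_5$ is unique up to isomorphism and has finite $\Aut(F)$; hence the Hilbert scheme of del Pezzo quintics $F \subset \PP^6$ is a single $\operatorname{PGL}_7$-orbit. Its dimension is $48 - 7 = 41$, where the $7$-dimensional stabilizer of $F$ consists of block-upper-triangular elements of $\operatorname{PGL}_7$ preserving $\langle F \rangle \simeq \PP^5$ and acting on $F$ via an element of $\Aut(F)$. For fixed $F$, since $F$ is scheme-theoretically cut out by $5$ quadrics in $\PP^5$ and the restriction $H^0(\PP^6, \OOO(2)) \to H^0(\PP^5, \OOO(2))$ is surjective with $7$-dimensional kernel, the space of quadrics through $F$ in $\PP^6$ has dimension $5 + 7 = 12$; pencils of such quadrics form $\Gr(2, 12)$ of dimension $20$, each uniquely determining $W$. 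Altogether, $\dim\{(W,F) \subset \PP^6\} = 41 + 20 = 61$, and the moduli dimension is $61 - 48 = 13$.

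The main obstacle is to show that the construction of \ref{sit:construction} covers a dense open subset of this $13$-dimensional moduli. I would establish this by observing that for a generic pair $(W, F)$, the hyperplane section $Y_4 := W \cap \langle F \rangle$ is forced to be singular: since $\deg F = 5$ is not divisible by $\deg Y_4 = 4$, the surface $F$ cannot be a Cartier divisor on a smooth $Y_4$, which has $\Pic(Y_4) \simeq \ZZ$ generated by the hyperplane class of degree $4$. A codimension count in the $6$-dimensional family of pencils of quadrics in $\PP^5$ through $F$ then yields that generically $Y_4$ has exactly three nodes in general position, so that by Lemma \ref{lem:add-1}, $Y_4$ arises as the projection from a point $P'$ of the unique quintic Fano threefold $X_5$. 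This recovers the construction input $(X_5, P', f_1, f_2, F')$ from $(W, F)$, confirming that the image of the construction has dimension exactly $13$.
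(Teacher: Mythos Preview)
Your proposal is correct and takes essentially the same approach as the paper: fix $F$ (unique up to isomorphism with finite $\Aut(F)$), count pencils of quadrics in $\PP^6$ through $F$ via $\dim\Gr(2,12)=20$, subtract the $7$-dimensional stabilizer of $F$ in $\operatorname{PGL}_7$, and invoke Corollary~\ref{corrollary-V12-} to pass to moduli. Your Hilbert-scheme framing $(41+20)-48=13$ is just a repackaging of the paper's $20-7=13$, and your density paragraph corresponds to the lemma immediately preceding Lemma~\ref{lem:dimension-count} in the paper.
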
 

\begin{proof} Recall that $F\subset \PP^5$ is an intersection of 5 linearly independent quadrics 
(\cite[Corollary 8.5.2]{Dolgachev-ClassicalAlgGeom}).
Thus the space of all quadrics in $\PP^6$ passing through $F$ 
has dimension $5+7=12$.
Pencils of quadrics passing through $F$ are parametrized by the Grassmannian $\Gr(2,12)$.
Since the group $\Aut (F)$ is finite, and any automorphism of $\PP^6$, which acts trivially on $F$, 
acts also trivially on $\PP^5$, the algebraic group $\Aut (\PP^6,F)=\Aut (\PP^6,\PP^5)$ 
has dimension 6, while $\dim\Aut (\mathbb{A}^7,\mathbb{A}^6)=7$. 
Modulo the $\Aut (\mathbb{A}^7,\mathbb{A}^6)$-action on $\Gr(2,12)$, we have 
$20- 7=13$-dimensional family of such pencils of quadrics.
Hence the dimension of the family of all Fano fourfolds that
can be obtained by our construction equals $13$. Its image in the moduli 
has the same dimension due to Corollary \ref{corrollary-V12-}.
\end{proof}

\end{document}